\numberwithin{equation}{section}
\newtheorem{theorem}{Theorem}[section]
\newtheorem{lemma}[theorem]{Lemma}
\newtheorem{definition}[theorem]{Definition}
\newtheorem{corollary}[theorem]{Corollary}
\newtheorem{conjecture}[theorem]{Conjecture}
\newtheorem{proposition}[theorem]{Proposition}
\newtheorem{question}{Question}
\newtheorem{remark}[theorem]{Remark}
\title[Nonabelian ray transform for twisted geodesics]{Loop group factorization method for the magnetic and thermostatic nonabelian ray transforms}
\author[S.R. Jathar]{Shubham R. Jathar}
\address{Indian Institute of Science Education and Research (IISER) Bhopal, India \& Computational Engineering, School of Engineering Sciences,
Lappeenranta-Lahti University of Technology LUT, Lappeenranta, Finland}
\email {shubham.jathar@lut.fi}
\author[M. Kar]{Manas Kar}
\address{Indian Institute of Science Education and Research (IISER) Bhopal, India}
\email{manas@iiserb.ac.in}
\author[J. Railo]{Jesse Railo}
\address{Department of Pure Mathematics and Mathematical Statistics,
University of Cambridge,
Cambridge, UK \& Computational Engineering, School of Engineering Sciences,
Lappeenranta-Lahti University of Technology LUT, Lappeenranta, Finland}
\email{jesse.railo@lut.fi}
\newcommand{\C}{{\mathbb C}}
\newcommand{\Z}{{\mathbb Z}}
\newcommand{\A}{{\mathcal A}}
\newcommand{\B}{{\mathcal B}}
\newcommand{\psm}{\partial_+SM}
\newcommand{\pnsm}{\partial_-SM}
\newcommand{\id}{\mathrm{Id}}
\newcommand{\fu}{\mathfrak u}
\NewDocumentCommand{\sff}{}{\mathrm{I\!I}}
\newcommand{\norm}[1]{\lVert #1 \rVert}
\newcommand{\ip}[2]{\left\langle #1,#2 \right\rangle}
\DeclareMathOperator{\inte}{int} 
\begin{document}

	\begin{abstract} 
We study the injectivity of the matrix attenuated and nonabelian ray transforms on compact surfaces with boundary for nontrapping $\lambda$-geodesic flows and the general linear group of invertible complex matrices. We generalize the loop group factorization argument of Paternain and Salo to reduce to the setting of the unitary group when $\lambda$ has the vertical Fourier degree at most $2$. This covers the magnetic and thermostatic flows as special cases. Our article settles the general injectivity question of the nonabelian ray transform for simple magnetic flows in combination with an earlier result by Ainsworth. We stress that the injectivity question in the unitary case for simple Gaussian thermostats remains open. Furthermore, we observe that the loop group argument does not apply when $\lambda$ has higher Fourier modes.
		\medskip
		
		\noindent{\bf Keywords.} Attenuated ray transform, nonabelian ray transform, magnetic geodesics, Gaussian thermostats.
		
		\noindent{\bf Mathematics Subject Classification (2020)}: 44A12, 58C99, 37E35

	\end{abstract}
    \maketitle
 \section{Introduction}

Our main goal in this article is to study the nonabelian $X$-ray transform for general connection $A$ and a matrix-valued field $\Phi$ for the $\lambda$-geodesic flows. We will work on Riemannian manifolds $(M, g)$ and, for all of our main results, we restrict our attention to Riemannian manifolds with $\operatorname{dim}(M)=2$. We define $G$ to be the Lie group of matrices with a Lie algebra $\mathfrak{g}$ and denote by $\Omega^1(M, \mathfrak{g})$ the space of smooth $\mathfrak{g}$-valued
$1$-forms. We will assume that $A\in \Omega^1(M, \mathfrak{g})$ and $\Phi \in C^{\infty}(M, \mathfrak{g})$. Formally, the quantities $A$ and $\Phi$ are called a Yang-Mills potential and Higgs field, respectively.
For a given pair $(A, \Phi)$, suppose that the map $U : [0,\tau(x,v)] \rightarrow G$ satisfies the matrix ordinary differential equation (ODE)
\begin{equation}\label{matrix_ODE}
\dot{U}+\left[A_{\gamma_{x, v}(t)}\left(\dot{\gamma}_{x, v}(t)\right)+\Phi\left(\gamma_{x, v}(t)\right)\right] U=0, \quad U(\tau(x, v))=\mathrm{Id}
\end{equation}
along the $\lambda$-geodesic $\gamma_{x, v}$ starting at the influx boundary point $(x,v)\in \partial_{+}SM$.
We define the scattering data (or the nonabelian $\lambda$-ray transform) along $\lambda$-geodesic $\gamma_{x, v}$ by requiring $C_{A,\Phi}^{\lambda}(\gamma_{x, v}) := U(0).$ The identity \eqref{matrix_ODE} reduces to 
\[
\log U(0)=\int_0^{\tau(x, v)}\left[A_{\gamma_{x, v}(t)}\left(\dot{\gamma}_{x, v}(t)\right)+\Phi\left(\gamma_{x, v}(t)\right)\right] d t
\]
if $A$ and $\Phi$ are assumed to be scalar $\mathbb{C}$-valued functions and this transformation can be regarded as the classical $X$-ray transform of $A+\Phi$ along the twisted geodesic flow. Note that $C_{A,\Phi}^{\lambda}$ maps $C_{A,\Phi}^{\lambda} : \partial_{+}SM \rightarrow G$.
\begin{question}
Can one recover the pair $(A,\Phi)$ from the knowledge of the scattering data $C_{A,\Phi}^{\lambda}$ corresponding to the $\lambda$-geodesic flow?
\end{question}
The nonabelian ray transform has appeared in many different areas of physics, geometry and applied mathematics. In the case when the Higgs fields $\Phi=0$, the scattering data $C_{A}$ represents the parallel transport of the connection $A$ and the injectivity for the nonabelian $X$-ray transform is related to the recovery of a connection up to a gauge from $C_A$ along the geodesics of the metric $g$. For the unitary connections $A$, the scattering data $C_A$ can be recovered from the connection Laplacian $d_{A}^{*}d_A$, where $d_A=d+A$, at a high energy limit, and also this data can be generated from the corresponding wave equation, see for instance \cite{Jakobson_Strohmaier:2007, oksanen2020inverse, Gunther:2004} for further details. When the connection $A=0$ and $\Phi\in C^{\infty}(M,\mathfrak{so}(3))$, the nonabelian  ray transform appears in polarimetric neutron tomography, see \cite{Desai_et_all:2020, hilger2018tensorial}, and in the case of general attenuation pairs $(A,\Phi)$, it has applications in solitons, see \cite{Manakov_Zakharov:1981, Ward:1988}. Other applications include coherent quantum tomography \cite{Ilmavirta:2016}. See also \cite{Novikov:2019} for further applications.  

There is a natural obstruction to solve the above question. Let us define the gauge group $\mathcal{G}$ consisting of smooth $u : M\rightarrow G$ such that $u|_{\partial M} = \text{Id}$ and we note that the gauge group $\mathcal{G}$ acts on pairs $(A,\Phi)$ by requiring 
$
(A, \Phi) \cdot u=\left(u^{-1} d u+u^{-1} A u, u^{-1} \Phi u\right)
$.
Observe that for any $u \in \mathcal{G}$, one has
$
C_{(A, \Phi) \cdot u}^{\lambda}=C_{A, \Phi}^{\lambda} .
$
 Therefore, the main interest of this article is to understand the injectivity question 
for the nonabelian twisted X-ray transform
$
(A, \Phi) \mapsto C_{A, \Phi}^{\lambda}
$
up to the natural gauge.
Throughout the article, we will denote the set of $n\times n$ matrices by $\mathbb{C}^{n\times n}$ or $\mathfrak{g l}(n, \mathbb{C})$. We can now state the following conjecture related to the uniqueness of the nonabelian ray transform up to the natural gauge for general connection and Higgs fields for $\lambda$-geodesic flows.
\begin{conjecture}\label{conjecture 1}
Let $(M, g, \lambda)$, $\lambda \in C^\infty(SM)$, be a compact Riemannian surface with strictly $\lambda$-convex boundary and nontrapping $\lambda$-geodesic flow. Let there be two pairs $(A, \Phi)$ and $(B, \Psi)$ such that $A, B \in \Omega^1(M, \mathfrak{g l}(n, \mathbb{C}))$ and $\Phi, \Psi \in C^{\infty}(M, \mathfrak{g l}(n, \mathbb{C}))$. If
$$
C_{A, \Phi}^{\lambda}=C_{B, \Psi}^{\lambda},
$$
then there exists $u \in \mathcal{G}$ such that $(A, \Phi) \cdot u=(B, \Psi)$.
\end{conjecture}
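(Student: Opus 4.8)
The plan is to reduce Conjecture~\ref{conjecture 1} to the already-understood \emph{unitary} case — pairs valued in $\mathfrak{u}(n)\subset\mathfrak{gl}(n,\mathbb{C})$ — by using a loop group factorization to manufacture, from an arbitrary $\mathfrak{gl}(n,\mathbb{C})$-pair, a gauge transformation in $\mathcal{G}$ that unitarizes it without altering the scattering data. This is the strategy of Paternain and Salo for the ordinary geodesic flow; the work lies in pushing it through for $\lambda$-geodesics and then invoking a known unitary injectivity result.

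First I would package the pair into a single attenuation: the function $\mathbb{A}(x,v):=A_x(v)+\Phi(x)$ on $SM$ has vertical Fourier degree at most $1$ (the connection contributing the modes $\pm 1$, the Higgs field the mode $0$), and the integrating factor $U$ in \eqref{matrix_ODE} solves the transport equation $FU+\mathbb{A}U=0$ along the flow generated by $F=X+\lambda V$. Next, introduce a spectral parameter $z\in\mathbb{C}^\times$, rescale the vertical Fourier modes of $\mathbb{A}$ to form a loop $\mathbb{A}^z$, and solve the associated transport equation for a $\Lambda GL(n,\mathbb{C})$-valued integrating factor $U^z$. A pointwise loop group factorization $U^z=U^z_-\,U^z_+$ of Birkhoff/Iwasawa type — with $U^z_+$ holomorphic in $|z|<1$ and $U^z_-$ holomorphic outside — should then have the feature that one factor has only the zeroth vertical Fourier mode, hence yields a map $u\colon M\to GL(n,\mathbb{C})$; this $u$ is smooth by regularity of the factorization and equals $\mathrm{Id}$ on $\partial M$ because $U$ does, so $u\in\mathcal{G}$. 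Differentiating the factorization along $F$ and matching Fourier modes should show that $(A,\Phi)\cdot u$ is $\mathfrak{u}(n)$-valued, with its connection part again of degree $1$ and its Higgs part of degree $0$.

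The crucial — and most delicate — step is precisely that this ``unitarization'' closes up. For the ordinary geodesic flow it works because $X=\eta_++\eta_-$ shifts the vertical degree by exactly $\pm 1$, so splitting Fourier modes into negative and nonnegative parts is compatible with the flow. For $\lambda$-geodesics the extra term $\lambda V$ acts diagonally on each $\Omega_k$ but, once multiplied by $\lambda$, spreads modes by $\pm\deg\lambda$; tracking this through the factorization, the mode bookkeeping still closes precisely when $\lambda$ has vertical Fourier degree at most $2$ — which includes the magnetic case ($\lambda$ independent of the velocity) and the thermostatic case ($\lambda$ linear in the velocity) — and, as one sees from the spurious higher modes that get generated, fails otherwise. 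I expect this mode accounting, together with the functional-analytic regularity of the loop group factorization over a surface with boundary (staying in the ``big cell'', smoothness and boundary normalization of the factors), to be the main technical obstacle.

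Granting the reduction, the conjecture follows in the cases where unitary injectivity is known. Given $(A,\Phi)$ and $(B,\Psi)$ with $C_{A,\Phi}^{\lambda}=C_{B,\Psi}^{\lambda}$, gauge both into unitary pairs $(A',\Phi')$ and $(B',\Psi')$; since the gauge action leaves the scattering data invariant, $C_{A',\Phi'}^{\lambda}=C_{B',\Psi'}^{\lambda}$, and the known unitary injectivity for the relevant $\lambda$-transform then supplies $w\in\mathcal{G}$ with $(A',\Phi')\cdot w=(B',\Psi')$; composing the three gauges gives the desired $u\in\mathcal{G}$. For simple magnetic flows the required unitary statement is the theorem of Ainsworth, so the conjecture is settled unconditionally there; for simple Gaussian thermostats the unitary injectivity remains open, and the loop group method alone does not resolve it — this is the genuine remaining gap toward a full proof of Conjecture~\ref{conjecture 1}, and for $\lambda$ with vertical Fourier modes beyond degree $2$ even the reduction step is unavailable.
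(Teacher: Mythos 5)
Your high-level reading of the strategy is right in outline (loop groups, reduction to the unitary case, the degree-$\le 2$ condition on $\lambda$, Ainsworth settling the magnetic case, the thermostatic unitary case remaining open), but the mechanism you propose for the reduction has a genuine gap. You claim that a Birkhoff/Iwasawa factorization of the integrating factor produces a factor ``with only the zeroth vertical Fourier mode,'' hence a map $u: M \to GL(n,\C)$ lying in $\mathcal{G}$, and that $(A,\Phi)\cdot u$ is then $\mathfrak{u}(n)$-valued. Neither claim holds. The factorization used here (and in Paternain--Salo) writes the integrating factor $R: SM \to GL(n,\C)$ as $R = FU$ with $F$ fiberwise holomorphic (all nonnegative vertical modes, not just the zeroth) and $U$ pointwise unitary; neither factor is fiberwise constant, so neither descends to a map on $M$, and $F$ is emphatically not a gauge transformation in $\mathcal{G}$. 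Worse, no $u \in \mathcal{G}$ can unitarize a general pair: $u^{-1}\Phi u$ is skew-Hermitian only if $\Phi(x)$ is pointwise conjugate to a skew-Hermitian matrix, i.e.\ diagonalizable with purely imaginary spectrum, which fails for generic $\Phi \in C^\infty(M,\mathfrak{gl}(n,\C))$ (for instance any $\Phi$ taking a nonzero nilpotent value). Your boundary normalization is also off: the integrating factor equals $\mathrm{Id}$ only on $\partial_-SM$, while on $\partial_+SM$ it equals the scattering data, so the factors need not be $\mathrm{Id}$ on $\partial M$.

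The paper's actual route avoids unitarizing the pair altogether. It first \emph{linearizes}: by the pseudolinearization identity (Lemma \ref{lm:psedo:ide} and Proposition \ref{proposition:gaugeU}), $C^\lambda_{A,\Phi}=C^\lambda_{B,\Psi}$ forces the \emph{attenuated} transform $I^\lambda_{E(\A,\B)}(\A-\B)$ to vanish, where $E(\A,\B)H=\A H-H\B$ is itself a connection-plus-Higgs pair on the endomorphism bundle, so Conjecture \ref{conjecture 2} applies to the solution $W=U_{\A}U_{\B}^{-1}-\mathrm{Id}$. Only at the level of this \emph{linear} transport problem on $SM$ is the loop group factorization invoked: conjugating by the fiberwise holomorphic factor $F$ turns the attenuation into a skew-Hermitian one of Fourier degree $\le 1$ (Lemma \ref{lm:loop:skew}, and this is exactly where $\lambda\in\oplus_{-2\le k\le2}\Omega_k$ enters), so that Conjecture \ref{conjecture 3} --- the unitary \emph{attenuated}, not nonabelian, statement --- yields holomorphicity of the solution; the antiholomorphic factorization gives the other half, forcing the solution into $\Omega_0$ and hence producing the gauge. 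To repair your write-up you would need to insert the pseudolinearization step and relocate the loop group factorization from the pair $(A,\Phi)$ on $M$ to the integrating factor of the transport equation on $SM$.
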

Conjecture \ref{conjecture 1} has been settled in \cite{Paternain:Salo:2020:a} when $\lambda = 0$ and with no conjugate point assumption. Notice that the gauge is trivial with the a priori assumption $A,B= 0$. The principle idea behind solving Conjecture \ref{conjecture 1} in the case of $\lambda=0$ is to reduce the problem into the attenuated $X$-ray transform for general connection and Higgs fields via a pseudolinearization  identity and then study this attenuated $X$-ray transform. In particular, they proved Conjecture \ref{conjecture 2} for $\lambda=0$. To proceed further, we consider the unique solution $u(t)$ for the vector-valued ordinary differential equation
$$
\dot{u}+\left[A_{\gamma_{x, v}(t)}\left(\dot{\gamma}_{x, v}(t)\right)+\Phi\left(\gamma_{x, v}(t)\right)\right] u=-f\left(\gamma_{x, v}(t), \dot{\gamma}_{x, v}(t)\right), \quad u(\tau(x, v))=0,
$$
where ${\gamma}_{x, v}(t)$ is a $\lambda$-geodesic with $\gamma_{x,v}(0)=x$, $\dot\gamma_{x,v}(0)=v$ and $A \in \Omega^1(M, \mathfrak{g l}(n, \mathbb{C})), \Phi \in C^{\infty}(M, \mathfrak{g l}(n, \mathbb{C}))$,
$f \in C^{\infty}\left(S M, \mathbb{C}^n\right)$ are given known quantities. 
The attenuated twisted $X$-ray transform of $f$ can be defined by
$$
I_{A, \Phi}^{\lambda}(f)(x, v):=u(0)
$$
where $(x, v) \in \partial_{+} S M$. We now formulate another conjecture.
\begin{conjecture}\label{conjecture 2}
Let $(M, g, \lambda)$, $\lambda \in C^\infty(SM)$, be a compact Riemannian surface with strictly $\lambda$-convex boundary and nontrapping $\lambda$-geodesic flow. Let $(A, \Phi)$ be an arbitrary attenuation pair with $A \in \Omega^1(M, \mathfrak{gl}(n, \mathbb{C}))$ and $\Phi \in C^{\infty}(M, \mathfrak{gl}(n, \mathbb{C}))$. Let $f=f_0+\alpha \in C^\infty(SM,\C^n)$ be a smooth function where $f_0\in C^{\infty}(M,\C^n)$ and $\alpha$ is a $\C^n$-valued $1$-form. If $I_{A, \Phi}^{\lambda}(f)=0$, then $f_0=\Phi p$ and $\alpha=dp+A p$ for some $p \in C^\infty(M,\C^n)$ with $\left.p\right|_{\partial M}=0.$
\end{conjecture}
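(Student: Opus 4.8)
The plan is to convert the vanishing of $I^\lambda_{A,\Phi}(f)$ into a transport equation on $SM$, to construct matrix holomorphic and antiholomorphic integrating factors by a loop group factorization of the parallel transport cocycle --- the step that uses the hypothesis on $\lambda$ --- and to use them to conjugate the $\mathrm{GL}(n,\C)$ transport equation into a $\mathfrak u(n)$-valued one, after which the known unitary-group case together with the Pestov identity finishes the argument. In detail: view $u$ from the definition of $I^\lambda_{A,\Phi}$ as a map $u\colon SM\to\C^n$ solving $\mathbf F u + \mathcal A u = -f$ along every flow line with $u$ vanishing at the exit points, where $\mathbf F=X+\lambda V$ is the $\lambda$-geodesic vector field, $X$ the geodesic field and $V$ the vertical field, and $\mathcal A:=A+\Phi\in C^\infty(SM,\mathfrak{gl}(n,\C))$ has vertical Fourier degrees $-1,0,1$. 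Then $I^\lambda_{A,\Phi}(f)=0$ gives $u|_{\psm}=0$, and with $u|_{\pnsm}=0$ we get $u|_{\partial SM}=0$; it thus suffices to prove that $u$ has vertical Fourier degree $0$, i.e. $u(x,v)=p(x)$ for some $p\in C^\infty(M,\C^n)$ with $p|_{\partial M}=0$, since then $\mathbf F u=dp_x(v)$ (as $Vu=0$) and matching vertical Fourier degrees in $\mathbf F u + \mathcal A u=-f$ gives $\Phi p=-f_0$ and $dp+Ap=-\alpha$, which is the assertion after replacing $p$ by $-p$.

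\emph{Construction of integrating factors.} In the unitary case one has the classical matrix holomorphic integrating factor (Paternain--Salo--Uhlmann): $R\colon SM\to\mathrm{GL}(n,\C)$ with $R$ and $R^{-1}$ of vertical degree $\geq 0$ and $\mathbf F R+\mathcal A R=0$, built from a $\bar\partial$-equation and an estimate that uses skew-Hermiticity of $\mathcal A$, together with an antiholomorphic analogue of degrees $\leq 0$. For a general $\mathrm{GL}(n,\C)$ pair this estimate fails, and here the loop group argument of Paternain and Salo takes over: the cocycle $U(x,\cdot)$ solving $\mathbf F U+\mathcal A U=0$, restricted to a fibre $S_xM\cong S^1$, is a loop in $\mathrm{GL}(n,\C)$, which one factors --- smoothly in $x$, once it is placed in the right cell --- into the boundary values of a fibrewise holomorphic and a fibrewise antiholomorphic matrix; globalising the holomorphic factor yields $\rho\colon SM\to\mathrm{GL}(n,\C)$, fibrewise holomorphic with fibrewise holomorphic inverse, such that $\mathcal B:=\rho^{-1}\mathbf F\rho+\rho^{-1}\mathcal A\rho$ is $\mathfrak u(n)$-valued, i.e. $\rho$ is a complex gauge carrying $(A,\Phi)$ into the unitary setting. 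The vertical degree of $\lambda$ enters exactly here: because $X$ shifts vertical Fourier degree by $\pm1$ and $\lambda V$ by at most $\pm\deg_v\lambda$, the factorization intertwines $\mathbf F$ with the fibrewise holomorphic filtration and keeps $\mathbf F\rho+\mathcal A\rho$ in the admissible range only when $\deg_v\lambda\leq 2$.

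\emph{Conclusion.} Writing $u=\rho w$ converts the transport equation into $\mathbf F w+\mathcal B w=-\rho^{-1}f$ with $w|_{\partial SM}=0$; as $\rho^{-1}$ has vertical degrees $\geq 0$ and $f$ has degrees $\geq -1$, the source $\rho^{-1}f$ has degrees $\geq -1$, and the antiholomorphic factor gives, symmetrically, a source with degrees $\leq 1$. Applying the unitary case of the present statement for the flow $\mathbf F$ --- valid for simple magnetic flows by Ainsworth's theorem, and in general a consequence of holomorphic $s$-injectivity via the Pestov identity when the $\lambda$-geodesic flow has no conjugate points --- yields that $w$, hence $u$ after transporting back through the holomorphic and antiholomorphic conjugations, has vertical Fourier degree $0$; combined with the first paragraph this proves the statement. (Once this is available, Conjecture~\ref{conjecture 1} follows by the Paternain--Salo pseudolinearisation identity applied to $\mathcal A\otimes\id-\id\otimes\mathcal B^{T}$.)

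\emph{Main obstacle.} Two issues carry the weight. First, the loop group factorization must be global and depend smoothly on the base point: one has to avoid the lower Birkhoff cells, equivalently trivialise the holomorphic bundles over $\mathbb P^1$ attached to the fibrewise cocycles, which I would handle by a homotopy or perturbation argument as in Paternain--Salo, or by substituting the everywhere-defined Iwasawa factorization of $L\mathrm{GL}(n,\C)$ for the Birkhoff one. Second, and more essentially, the intertwining above genuinely breaks once $\lambda$ carries vertical Fourier modes of order larger than $2$, so the loop group reduction does not extend to such flows; and even for Gaussian thermostats ($\deg_v\lambda=1$) the method only reduces the problem to the unitary case, which remains open in that setting.
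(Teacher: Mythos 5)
Your proposal follows essentially the same route as the paper's (conditional) treatment of this statement: pass to the transport equation for $u$ with $u|_{\partial SM}=0$, use the Paternain--Salo loop group factorization of the matrix integrating factor to conjugate the attenuation into a skew-Hermitian one lying in $\Omega_{-1}\oplus\Omega_0\oplus\Omega_1$ (which is precisely where the hypothesis that the vertical Fourier degree of $\lambda$ is at most $2$ enters), and then invoke the unitary case (Conjecture \ref{conjecture 3}, resp.\ Ainsworth's theorem for simple magnetic flows) together with the symmetric antiholomorphic/complex-conjugate argument to force $u$ to have vertical degree $0$. As you correctly note, this establishes the statement only under the extra hypotheses of Theorem \ref{them_conjecture 2} (or unconditionally in the magnetic case); the full conjecture remains open, exactly as in the paper.
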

In \cite{Paternain:Salo:2020:a}, Conjecture \ref{conjecture 2} has been proved in the case of $\lambda=0$ and absence of conjugate points. They used a factorization theorem for Loop groups to reduce the problem of attenuated $X$-ray transform for general connections and Higgs fields to the problem of attenuated $X$-ray transform for unitary connections and skew-Hermitian Higgs fields, which is precisely Conjecture \ref{conjecture 3} for $\lambda=0$. Conjectures \ref{conjecture 1} and \ref{conjecture 3} were proved in \cite{Paternain:Salo:Uhlmann:2012} for unitary connections $A$ and skew-Hermitian Higgs fields $\Phi$ for usual geodesic flows. In \cite{paternain2021carleman}, Conjecture \ref{conjecture 2} has been proved for $\lambda=0$ in the case of compact Riemannian nontrapping negatively curved manifold with smooth strictly convex boundary for general connections and Higgs fields, and they used certain Carleman estimates related to the geodesic flow. This result has been extended to the case of attenuated ray transform for Gaussian thermostat on a compact Riemannian surface with negative thermostat curvature in \cite{Assylbekov:Rea:Arxiv:2021}. In particular, they proved an injectivity result for thermostat ray transform with a general connection and Higgs fields up to a gauge transformation under curvature assumptions.
We denote by $\mathfrak{u}(n)$ the Lie algebra of the unitary group. We now state the following conjecture. 
\begin{conjecture}\label{conjecture 3}
Let $(M, g, \lambda)$, $\lambda \in C^\infty(SM)$, be a compact Riemannian surface with strictly $\lambda$-convex boundary and nontrapping $\lambda$-geodesic flow. Let $A: S M \rightarrow \mathfrak{u}(n)$ be a unitary connection and $\Phi: M \rightarrow \mathfrak{u}(n)$ be a skew-Hermitian matrix function. Let $f=f_0+\alpha \in C^\infty(SM,\C^n)$ be a smooth function where $f_0\in C^{\infty}(M,\C^n)$ and $\alpha$ is a $\C^n$-valued $1$-form. If $I_{A, \Phi}^{\lambda}(f)=0$, then $f_0=\Phi p$ and $\alpha=d p+A p$ for some $p \in C^\infty(M,\C^n)$ with $\left.p\right|_{\partial M}=0.$
\end{conjecture}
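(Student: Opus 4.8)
The plan is to adapt the energy-identity-plus-integrating-factor scheme that Paternain--Salo--Uhlmann developed for $\lambda=0$, and that Ainsworth extended to magnetic flows, to the general $\lambda$-geodesic setting. First I would restate the hypothesis as a transport problem on $SM$. Write $F=X+\lambda V$ for the generator of the $\lambda$-geodesic flow, with $X$ the geodesic vector field and $V$ the vertical field, and set $\mathbb{A}(x,v):=A_x(v)+\Phi(x)$, a smooth function on $SM$ with vertical Fourier degrees $-1,0,1$. The strictly $\lambda$-convex boundary and nontrapping hypotheses imply that $I^{\lambda}_{A,\Phi}(f)=0$ is equivalent to the existence of $u\in C^{\infty}(SM,\C^{n})$ with
\begin{equation}
 Fu+\mathbb{A}\,u=-f\quad\text{on }SM,\qquad u|_{\partial SM}=0 .
\end{equation}
(The smooth extension of $u$ across the glancing set is the usual regularity point; $u$ vanishes on all of $\partial SM$ because the two endpoint traces of $u$ along the $\lambda$-geodesics sweep out $\partial_{+}SM\cup\partial_{-}SM$.) The key reduction is this: if one can show that $u$ has vertical Fourier degree $0$, i.e.\ $u=u_{0}\in C^{\infty}(M,\C^{n})$, then $Vu_{0}=0$, so $Fu_{0}=Xu_{0}$ has vertical Fourier degrees $\pm 1$ only, and comparing vertical modes in $Fu_{0}+\mathbb{A}u_{0}=-f$ splits it into $\Phi u_{0}=-f_{0}$ (degree $0$) and $du_{0}+Au_{0}=-\alpha$ (degrees $\pm 1$); then $p:=-u_{0}$ is smooth, vanishes on $\partial M$, and yields $f_{0}=\Phi p$ and $\alpha=dp+Ap$. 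So the whole problem reduces to proving $u=u_{0}$.

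To kill the higher vertical modes of $u$ I would use holomorphic and antiholomorphic integrating factors adapted to the $\lambda$-flow: matrices $R_{\pm}\in C^{\infty}(SM,GL(n,\C))$ solving $FR_{\pm}+\mathbb{A}R_{\pm}=0$ such that $R_{+}$ and $R_{+}^{-1}$ have only nonnegative, and $R_{-}$ and $R_{-}^{-1}$ only nonpositive, vertical Fourier modes. Then $\tilde u_{\pm}:=R_{\pm}^{-1}u$ solves $F\tilde u_{\pm}=-R_{\pm}^{-1}f$ with $\tilde u_{\pm}|_{\partial SM}=0$; since $f$ has vertical degrees $-1,0,1$ and $R_{+}^{-1}$ (resp.\ $R_{-}^{-1}$) has nonnegative (resp.\ nonpositive) modes, the right-hand sides have vertical degree $\ge-1$ (resp.\ $\le 1$). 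A Pestov (energy) identity for the $\lambda$-flow, together with the convexity and nontrapping hypotheses, then forces $\tilde u_{+}$ to have only nonnegative and $\tilde u_{-}$ only nonpositive vertical modes. Multiplying again by the one-sided integrating factors, $u=R_{+}\tilde u_{+}$ has only nonnegative and $u=R_{-}\tilde u_{-}$ only nonpositive vertical modes, so $u=u_{0}$ has vertical degree $0$, which completes the reduction.

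The genuine obstacle is the construction of these fibrewise one-sided integrating factors, which amounts to a $\bar\partial$-type solvability statement governed by the sign of the curvature term appearing in the $\lambda$-Pestov identity; it is exactly here that the unitary structure of $(A,\Phi)$ is used, and (for the subsequent passage to Conjectures \ref{conjecture 2} and \ref{conjecture 1}) where the loop group factorization and pseudolinearization of the present article take over. For magnetic systems the construction was carried out by Ainsworth, using the magnetic analogues of the raising and lowering operators $\eta_{\pm}$ together with the magnetic Pestov identity, so Conjecture \ref{conjecture 3}---and hence, through Conjectures \ref{conjecture 2} and \ref{conjecture 1}, the full nonabelian problem---can be settled for simple magnetic flows. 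For Gaussian thermostats the corresponding energy identity carries an indefinite curvature term, so without an extra negative-curvature assumption (as in Assylbekov--Rea) one cannot presently produce the integrating factors, and the general simple thermostat case stays open; I would expect any unconditional proof to hinge on a new, sign-condition-free construction of one-sided integrating factors for the thermostat flow.
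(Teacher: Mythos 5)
The statement you were asked to prove is \emph{Conjecture} \ref{conjecture 3}: the paper does not prove it, and in the stated generality ($\lambda\in C^\infty(SM)$ arbitrary) it is open --- the authors explicitly stress that even the simple Gaussian thermostat case of the unitary problem remains unresolved. The paper only records that the conjecture is known for $\lambda=0$ (Paternain--Salo--Uhlmann) and for $\lambda\in C^\infty(M)$ (Ainsworth's magnetic result), and then proves \emph{implications}: Conjecture \ref{conjecture 3} $\Rightarrow$ Conjecture \ref{conjecture 2} when $\lambda\in\bigoplus_{-2\le k\le 2}\Omega_k$ (via loop group factorization), and Conjecture \ref{conjecture 2} $\Rightarrow$ Conjecture \ref{conjecture 1} (via pseudolinearization). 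So there is no ``paper's own proof'' to compare against, and any complete blind proof would be a new theorem.

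Your proposal is an accurate and well-organized account of the standard attack, and your reduction at the end (if $u=u_0$ then splitting $Fu_0+\mathbb{A}u_0=-f$ into vertical degrees $0$ and $\pm1$ gives $f_0=\Phi p$, $\alpha=dp+Ap$ with $p=-u_0$) is exactly how the paper concludes its related arguments. But it is not a proof: the load-bearing step --- the existence of fibrewise holomorphic and antiholomorphic integrating factors $R_\pm$ for the attenuated transport equation of the $\lambda$-flow, or equivalently the Pestov/energy identity input that forces $\tilde u_\pm$ to be one-sided --- is precisely the open problem, and you say so yourself. For general $\lambda\in C^\infty(SM)$ the twisted Pestov identity acquires curvature-type terms (involving $\lambda$, $X_\perp\lambda$, $V\lambda$) with no sign, and no construction of the required integrating factors is known; this is why the statement is a conjecture rather than a theorem. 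Your sketch does correctly delimit where the program succeeds (simple surfaces with $\lambda=0$; simple magnetic flows, where combining Ainsworth's theorem with the paper's Theorem \ref{them_conjecture 2} yields Theorem \ref{them_conjecture 3}), but as a proof of Conjecture \ref{conjecture 3} itself it has an essential, acknowledged gap at the crucial step.
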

Conjecture \ref{conjecture 3} has been established in the case of $\lambda=0$ and no conjugate point assumption, see  \cite[Theorem 1.3]{Paternain:Salo:Uhlmann:2012}. In addition to that, they also proved Conjecture \ref{conjecture 1} for Hermitian connections and skew-Hermitian Higgs fields in a compact simple Riemannian surface in the case of usual geodesics, see for instance \cite[Theorem 1.5]{Paternain:Salo:Uhlmann:2012}. The idea of the proof of Conjecture \ref{conjecture 3} for $\lambda=0$ in \cite{Paternain:Salo:Uhlmann:2012} is based on a proof similar to Kodaira vanishing theorem in complex geometry to the transport problem relevant for the corresponding nonabelian X-ray transform. In particular, they used a scalar holomorphic integrating factor and applied a suitable Pestov identity. A similar approach has been followed in \cite[Theorem 1.2]{Ainsworth:2013} to prove Conjecture \ref{conjecture 3} in the presence of magnetic geodesic flows.   
So far, \cite[Theorem 1.2]{Ainsworth:2013} is the best-known result when $\lambda$ is assumed to be a function on $M$.

\subsection{Main results}
In this article, we first show that Conjecture \ref{conjecture 2} implies Conjecture \ref{conjecture 1} via a pseudolinearization identity for nontrapping $\lambda$-geodesic flows. The main contribution of this article is to find out under what conditions on $\lambda$, Conjecture \ref{conjecture 3} implies Conjecture \ref{conjecture 2}. We also state here interesting conclusions on the generalization of the results in \cite{Ainsworth:2013} for the simple magnetic flows on surfaces with boundary.

We now state our main theorems in this article and some of its corollaries related to the nonabelian ray transform. To do that, let us first introduce 
$\Omega_{k}=C^{\infty}(SM,\C^n)\cap H_{k}$, where $H_k$ is the eigenspace of $-iV$ corresponding to the eigenvalue $k$ and $V$ denotes the vertical vector field. In more detail, we will discuss such function spaces and related vector fields in Section \ref{sec_preliminary}. Our first theorem is an application of pseudolinearization  identity to prove the uniqueness of the nonabelian ray transform up to the natural gauge for general $\lambda$ whenever one assumes the injectivity result for the attenuated ray transform for general connections and Higgs fields for $\lambda\in C^{\infty}(SM)$ up to a natural obstruction.
\begin{theorem}\label{conj2_conj1}
Let $(M, g, \lambda)$, $\lambda \in C^\infty(SM)$, be a compact Riemannian surface with strictly $\lambda$-convex boundary and nontrapping $\lambda$-geodesic flow. If Conjecture \ref{conjecture 2} holds for $\lambda$, then Conjecture \ref{conjecture 1} is true for $\lambda$. 
\end{theorem}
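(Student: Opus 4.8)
The plan is to adapt the pseudolinearization argument of Paternain and Salo to $\lambda$-geodesics. Let $(A,\Phi)$ and $(B,\Psi)$ be two pairs as in Conjecture~\ref{conjecture 1} with $C_{A,\Phi}^{\lambda}=C_{B,\Psi}^{\lambda}$. Fix $(x,v)\in\partial_+SM$, let $U,V\colon[0,\tau(x,v)]\to G$ solve the matrix ODE \eqref{matrix_ODE} along $\gamma_{x,v}$ for $(A,\Phi)$ and $(B,\Psi)$ respectively, and abbreviate $\mathcal A(t):=A_{\gamma_{x,v}(t)}(\dot\gamma_{x,v}(t))+\Phi(\gamma_{x,v}(t))$ and $\mathcal B(t):=B_{\gamma_{x,v}(t)}(\dot\gamma_{x,v}(t))+\Psi(\gamma_{x,v}(t))$. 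I would study the matrix function $R:=UV^{-1}$: the hypothesis gives $U(0)=V(0)$, hence $R(0)=\mathrm{Id}$, while $R(\tau(x,v))=\mathrm{Id}$ by the normalization in \eqref{matrix_ODE}, and a direct differentiation yields $\dot R+\mathcal A R-R\mathcal B=0$.

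Setting $S:=R-\mathrm{Id}$, one obtains
\[
\dot S+\mathcal A S-S\mathcal B=\mathcal B-\mathcal A,\qquad S(\tau(x,v))=0,\qquad S(0)=0 .
\]
Identifying $\mathbb C^{n\times n}\cong\mathbb C^{n^{2}}$, the left-hand side is the transport operator of the attenuated twisted $X$-ray transform on the trivial $\mathbb C^{n^{2}}$-bundle for the connection $\widehat A\in\Omega^{1}(M,\mathfrak{gl}(n^{2},\mathbb C))$ acting by $\widehat A\,X=AX-XB$ and the Higgs field $\widehat\Phi\in C^{\infty}(M,\mathfrak{gl}(n^{2},\mathbb C))$ acting by $\widehat\Phi\,X=\Phi X-X\Psi$; these are smooth because $A,B,\Phi,\Psi$ are, and $X\mapsto AX-XB$ is linear in $X$, so $\widehat A$ is a genuine connection $1$-form. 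With $f:=(A-B)+(\Phi-\Psi)$, which has the form $f_{0}+\alpha$ with $f_{0}=\Phi-\Psi\in C^{\infty}(M,\mathbb C^{n^{2}})$ and $\alpha=A-B$ a $\mathbb C^{n^{2}}$-valued $1$-form, the two boundary conditions on $S$ say exactly that $I_{\widehat A,\widehat\Phi}^{\lambda}(f)=0$. This is the pseudolinearization identity for $\lambda$-geodesics.

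Since Conjecture~\ref{conjecture 2} is assumed to hold for $\lambda$ (applied to matrix-valued objects, i.e.\ in dimension $n^{2}$), there is $p\in C^{\infty}(M,\mathbb C^{n\times n})$ with $p|_{\partial M}=0$ and $\Phi-\Psi=\widehat\Phi\,p=\Phi p-p\Psi$, $A-B=dp+\widehat A\,p=dp+Ap-pB$. Put $u:=\mathrm{Id}-p\in C^{\infty}(M,\mathbb C^{n\times n})$; then $u|_{\partial M}=\mathrm{Id}$, and substituting $p=\mathrm{Id}-u$ into these two identities and cancelling gives $\Phi u=u\Psi$ and $du+Au=uB$. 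Provided $u(x)$ is invertible for every $x\in M$, these read $u^{-1}\Phi u=\Psi$ and $u^{-1}du+u^{-1}Au=B$, i.e.\ $(A,\Phi)\cdot u=(B,\Psi)$ with $u\in\mathcal G$, which is the assertion of Conjecture~\ref{conjecture 1}.

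\textbf{The step requiring care is the pointwise invertibility of $u$.} I would obtain it from the two identities just derived: along any $\lambda$-geodesic the function $y(t):=u(\gamma_{x,v}(t))$ satisfies $\dot y+\mathcal A y-y\mathcal B=0$ (using $du+Au=uB$ to compute $\dot y$ and $\Phi u=u\Psi$ to absorb the $\Phi,\Psi$ terms), together with $y(\tau(x,v))=\mathrm{Id}$ because $\gamma_{x,v}(\tau(x,v))\in\partial M$ and $p|_{\partial M}=0$. This is the same linear initial value problem solved by $R=UV^{-1}$, so uniqueness forces $y(t)=U(t)V(t)^{-1}$, a product of elements of $G$ and hence invertible. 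As the $\lambda$-geodesic flow is nontrapping and $\partial M$ is strictly $\lambda$-convex, every point of $M$ lies on some $\gamma_{x,v}$ with $(x,v)\in\partial_+SM$; therefore $u$ is $G$-valued on all of $M$ and $u\in\mathcal G$. The $\lambda$-geodesic flow enters only through finiteness of $\tau$, nontrapping, and ODE uniqueness along the curves $\gamma_{x,v}$, so the argument presents no new difficulty relative to $\lambda=0$; the only genuine bookkeeping point is fixing the signs in the gauge action so that $u=\mathrm{Id}-p$ (and not $\mathrm{Id}+p$) is the correct normalization.
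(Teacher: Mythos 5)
Your proof is correct and follows the same pseudolinearization strategy as the paper: both arguments reduce the hypothesis $C^{\lambda}_{A,\Phi}=C^{\lambda}_{B,\Psi}$ to the vanishing of the attenuated $\lambda$-ray transform $I^{\lambda}_{E(\mathcal A,\mathcal B)}(\mathcal A-\mathcal B)$ with the endomorphism-valued attenuation pair $\hat A(H)=AH-HB$, $\hat\Phi(H)=\Phi H-H\Psi$, and then invoke Conjecture \ref{conjecture 2}. The one place where you genuinely diverge is the endgame. The paper first upgrades the equality of scattering data to a smooth, everywhere-invertible $U=U_{\mathcal A}U_{\mathcal B}^{-1}:SM\to GL(n,\C)$ with $U|_{\partial SM}=\mathrm{Id}$ (Proposition \ref{proposition:gaugeU}, which rests on the regularity Lemma \ref{lm:reg:u_A_Lam}), so that the only remaining task after applying Conjecture \ref{conjecture 2} is to show $U$ has vertical Fourier degree zero; invertibility of the gauge is then automatic. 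You instead pass directly from Conjecture \ref{conjecture 2} to $u=\mathrm{Id}-p$ on $M$ and must then prove pointwise invertibility, which you do correctly by observing that $u\circ\gamma_{x,v}$ solves the same linear terminal-value problem as $U V^{-1}$ along each $\lambda$-geodesic and hence coincides with it; nontrapping guarantees every point is reached. The two endgames are equivalent (your $u$ is exactly the paper's $U$ restricted to degree zero, and your sign conventions for the gauge action check out), but yours bypasses the regularity lemma for the transport primitive at the cost of the extra ODE uniqueness argument, while the paper's route packages that same ODE uniqueness into Proposition \ref{proposition:gaugeU} and the step showing $r=\widetilde W-W=0$. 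Both are valid.
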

The next theorem shows that Conjecture \ref{conjecture 3} implies Conjecture \ref{conjecture 2} whenever the vertical Fourier degree of $\lambda$ is at most $2$.
\begin{theorem}\label{them_conjecture 2}
Let $(M, g, \lambda)$, $\lambda \in C^\infty(SM)$, be a compact Riemannian surface with strictly $\lambda$-convex boundary and nontrapping $\lambda$-geodesic flow. If $\lambda \in \bigoplus_{-2 \leq k \leq 2} \Omega_k$ and Conjecture \ref{conjecture 3} holds for $\lambda$, then Conjecture \ref{conjecture 2} is true for $\lambda$.      
\end{theorem}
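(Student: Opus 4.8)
\para{Proof plan for Theorem~\ref{them_conjecture 2}}
The plan is to transplant the loop group factorization of Paternain and Salo \cite{Paternain:Salo:2020:a} from the geodesic case $\lambda=0$ to a nontrapping $\lambda$-geodesic flow, isolating the one place where the restriction $\lambda\in\bigoplus_{-2\le k\le 2}\Omega_k$ is used. Write $\mathbf{F}=X+\lambda V$ for the generating vector field, $X=\eta_++\eta_-$ for the raising/lowering decomposition of the geodesic field ($\eta_\pm\colon\Omega_k\to\Omega_{k\pm1}$, $V|_{\Omega_k}=ik$), and set $\mathcal{A}(x,v):=A_x(v)+\Phi(x)\in\Omega_{-1}\oplus\Omega_0\oplus\Omega_1$ for the given attenuation pair $(A,\Phi)$. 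If $I^\lambda_{A,\Phi}(f)=0$ for $f=f_0+\alpha$, then the solution $u\in C^\infty(SM,\C^n)$ of $\mathbf{F}u+\mathcal{A}u=-f$ with $u|_{\partial_+SM}=0$ vanishes on all of $\partial SM$, and it suffices to prove that $u$ has vertical Fourier degree $0$: then $p:=-u\in C^\infty(M,\C^n)$, $p|_{\partial M}=0$, and splitting $\mathbf{F}u+\mathcal{A}u=-f$ by vertical degree gives $f_0=\Phi p$ and $\alpha=dp+Ap$, which is Conjecture~\ref{conjecture 2}.

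\para{First step: unitarization}
Following \cite{Paternain:Salo:2020:a}, I would construct a smooth $R\colon SM\to GL(n,\C)$ with $R|_{\partial SM}=\mathrm{Id}$, with $R$ and $R^{-1}$ of nonnegative vertical Fourier degree, such that
\[
R^{-1}(\mathbf{F}+\mathcal{A})R=\mathbf{F}+\mathcal{B},\qquad \mathcal{B}:=R^{-1}\big(\mathbf{F}R+\mathcal{A}R\big)\in\Omega_{-1}\oplus\Omega_0\oplus\Omega_1,
\]
with $\mathcal{B}$ skew-Hermitian; then $\mathcal{B}=B_x(v)+\Psi(x)$ for a unitary connection $B$ and a skew-Hermitian Higgs field $\Psi$, and $\hat u:=R^{-1}u$ satisfies $\mathbf{F}\hat u+\mathcal{B}\hat u=-R^{-1}f$ with $\hat u|_{\partial SM}=0$, i.e.\ $I^\lambda_{B,\Psi}(R^{-1}f)=0$. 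One builds $R$ mode by mode in the vertical degree: the degree-$0$ equation makes $\mathcal{B}_0$ skew-Hermitian, the non-skew-Hermitian part of $\Phi$ being absorbed by the $\Omega_1$-component of $R$ through a solvable $\bar\partial$-type equation, while the remaining equations enforce skew-symmetry in degrees $\pm1$ and cancel all modes of $\mathcal{B}$ of degree $\ge2$ — a one-sided system that the loop group factorization resolves exactly as in \cite{Paternain:Salo:2020:a}.

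\para{Where the hypothesis enters, and the main obstacle}
For the previous step to close up with $R$ of nonnegative degree, one must check that $\mathcal{B}=R^{-1}(\mathbf{F}R+\mathcal{A}R)$ has vertical degree $\ge-1$, so that only the one-sided family of modes of degree $\ge2$ has to be removed; a loop group factorization performs exactly a one-sided cancellation. Since $R$ has degree $\ge0$, both $XR=\eta_+R+\eta_-R$ and $\mathcal{A}R$ have degree $\ge-1$. The delicate term is $\lambda VR$: because $V$ kills the degree-$0$ component $R_0$, the lowest mode of $VR$ lies in $\Omega_1$, so $\lambda VR$ has degree $\ge-2+1=-1$ precisely when $\lambda\in\bigoplus_{-2\le k\le2}\Omega_k$, the extreme contribution $\lambda_{-2}(i)R_1\in\Omega_{-1}$ being harmless. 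If $\lambda$ carried a mode of degree $\le-3$, the contribution $\lambda_{-3}(i)R_1$ would land in $\Omega_{-2}$ and force a nonzero mode of $\mathcal{B}$ in degree $-2$, which no nonnegative-degree conjugation can remove; this is the obstruction alluded to in the abstract. (The cases $\lambda\in\Omega_0$ and $\lambda\in\Omega_{-1}\oplus\Omega_1$ are the magnetic and thermostatic flows.) I expect the hard part to be precisely this: the existence, smoothness and boundary normalization of $R$ with the correct one-sided Fourier support, and the verification that the degree count closes if and only if $\lambda$ has vertical Fourier degree at most $2$.

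\para{Conclusion}
The transformed source $R^{-1}f$ has vertical degree $\ge-1$ but not in general $\le1$. Since $(B,\Psi)$ is unitary, I would subtract from it a $(B,\Psi)$-potential $(\mathbf{F}+\mathcal{B})q$, with $q$ of vertical degree $\ge1$ and $q|_{\partial M}=0$, chosen recursively so as to kill the modes of degree $\ge2$; this leaves the attenuated transform unchanged, so Conjecture~\ref{conjecture 3} applies to the resulting degree-$\le1$ source and yields $q'\in C^\infty(M,\C^n)$, $q'|_{\partial M}=0$, with $R^{-1}f=(\mathbf{F}+\mathcal{B})(q+q')$. Transporting back, $f=(\mathbf{F}+\mathcal{A})\big(R(q+q')\big)$, so $f$ is a potential for $(A,\Phi)$ with potential $P:=R(q+q')$ of vertical degree $\ge0$ and $P|_{\partial M}=0$; equivalently $u=-P$ has vertical degree $\ge0$. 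A final bootstrap in the vertical degree — again using $\lambda\in\bigoplus_{-2\le k\le2}\Omega_k$, that $f$ has degree $\le1$, and a unique continuation property of the operators $\eta_\pm$ (a holomorphic function vanishing on $\partial M$ vanishes identically) — peels off the successive top modes of $P$ and forces $P$ to have vertical degree $0$. This gives the required $p=-u\in C^\infty(M,\C^n)$; combined with \cite{Ainsworth:2013} it also settles the case of simple magnetic flows.
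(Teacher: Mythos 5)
Your reduction to showing that $u$ has vertical degree $0$, and your first two steps, track the paper's argument closely: the conjugation by a fiberwise holomorphic factor $F$ (obtained from the loop group factorization $R=FU$ of the matrix integrating factor of Lemma \ref{lm:R:equation}) turns $(A,\Phi)$ into a skew-Hermitian attenuation $\mathcal{B}\in\Omega_{-1}\oplus\Omega_0\oplus\Omega_1$, and your degree count for $\lambda VR$ --- the lowest mode of $VR$ sits in $\Omega_1$ because $V$ kills $\Omega_0$, so $\lambda\in\bigoplus_{-2\le k\le 2}\Omega_k$ is exactly what keeps $\mathcal{B}$ of degree $\ge -1$ --- is precisely the content of Lemma \ref{lm:loop:skew} and Remark \ref{remark:obstruction}. (The boundary normalization $R|_{\partial SM}=\mathrm{Id}$ you impose is unnecessary: $R^{-1}u|_{\partial SM}=0$ already follows from $u|_{\partial SM}=0$, and the factorization theorem does not provide such a normalization anyway.)

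The endgame, however, has genuine gaps. First, your plan to subtract a potential $(\mathbf{F}+\mathcal{B})q$ with $q=\sum_{k\ge 1}q_k$ and $q_k|_{\partial M}=0$, chosen ``recursively so as to kill the modes of degree $\ge 2$'' of $R^{-1}f$, amounts to solving a coupled hierarchy whose leading equations are of the form $\eta_+q_{k-1}=(\text{given})$ with zero Dirichlet data; a $\bar\partial$-type equation with prescribed zero boundary values is overdetermined and in general unsolvable, and the hierarchy has no reason to terminate or converge. Second, the final bootstrap presupposes that $P=R(q+q')$ has a top Fourier mode, whereas a smooth solution has a priori infinite vertical degree; and even granting finite degree, the top-degree equations are not the homogeneous equations $\eta_+P_N=0$ that your unique-continuation step would need --- they couple $P_N$ to $P_{N-1}$ through $\lambda_{\pm 1},\lambda_{\pm 2}$ and $\mathcal{A}$. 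The paper avoids both problems with a simpler device (Theorem \ref{holo}): apply Conjecture \ref{conjecture 3} not to a modified source but to the antiholomorphic part $q=\sum_{k\le -1}(F^{-1}u)_k$ of the transformed solution, whose transport image $(X+\lambda V+\mathcal{B})q$ lies in $\Omega_{-1}\oplus\Omega_0\oplus\Omega_1$ by the same degree count; Conjecture \ref{conjecture 3} then produces $p\in C^\infty(M,\C^n)$ with $p|_{\partial M}=0$ and $(X+\lambda V+\mathcal{B})(q-p)=0$, so uniqueness along the nontrapping flow forces $q=p\in\Omega_0$, hence $q=0$ and $u$ is fiberwise holomorphic. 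Repeating the entire argument for the complex-conjugated equation (whose data still satisfy all the degree hypotheses, since conjugation sends $\Omega_k$ to $\Omega_{-k}$) shows $\overline{u}$ is holomorphic as well, whence $u=u_0$ and $p=-u$ works. You should replace your subtraction-plus-bootstrap step with this conjugation argument.
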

Of course, trivially for any $\lambda \in C^\infty(SM)$ Conjecture \ref{conjecture 2} implies Conjecture \ref{conjecture 3}. In the proof of Theorem \ref{them_conjecture 2}, we mainly employ the loop group factorization arguments to reduce the attenuated ray transform for general connections and Higgs fields to the attenuated ray transform for the unitary connections and skew-Hermitian Higgs fields when $\lambda$ is assumed to be the sum of a function and $1$-form. 
However, in the case where $\lambda\in \oplus_{k=-j}^{k=j}\Omega_{k}$, for $j\geq 3$, the argument based on the loop group factorization no longer directly works (cf. Remark \ref{remark:obstruction}).  
\begin{corollary}\label{them_conjecture_therm 1} 
Let $(M, g, \lambda)$, $\lambda \in C^\infty(SM)$, be a compact Riemannian surface with strictly $\lambda$-convex boundary and nontrapping $\lambda$-geodesic flow. Let $\lambda \in \bigoplus_{-2 \leq k \leq 2} \Omega_k$ and assume that Conjecture \ref{conjecture 3} holds for $\lambda$. Suppose we are given pairs $(A, \Phi)$ and $(B, \Psi)$ with $A, B \in \Omega^1(M, \mathfrak{g l}(n, \mathbb{C}))$ and $\Phi, \Psi \in C^{\infty}(M, \mathfrak{g l}(n, \mathbb{C}))$. If
$$
C_{A, \Phi}^{\lambda}=C_{B, \Psi}^{\lambda},
$$
then there is $u \in \mathcal{G}$ such that $(A, \Phi) \cdot u=(B, \Psi)$.
\end{corollary}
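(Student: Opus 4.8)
The plan is to obtain the statement as an immediate consequence of the two main theorems, chained together. First I would verify that the hypotheses of Theorem \ref{them_conjecture 2} are satisfied: the surface $(M,g,\lambda)$ has strictly $\lambda$-convex boundary and nontrapping $\lambda$-geodesic flow, the vertical Fourier degree assumption $\lambda \in \bigoplus_{-2\leq k\leq 2}\Omega_k$ holds by hypothesis, and Conjecture \ref{conjecture 3} is assumed for this $\lambda$. Theorem \ref{them_conjecture 2} then yields that Conjecture \ref{conjecture 2} is true for $\lambda$, i.e.\ the attenuated twisted $X$-ray transform $I_{A,\Phi}^\lambda$ with an arbitrary $\mathfrak{gl}(n,\C)$-valued attenuation pair has the expected kernel described there.

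Next I would feed this into Theorem \ref{conj2_conj1}. Since $(M,g,\lambda)$ has strictly $\lambda$-convex boundary and nontrapping $\lambda$-geodesic flow and Conjecture \ref{conjecture 2} now holds for $\lambda$, Theorem \ref{conj2_conj1} gives that Conjecture \ref{conjecture 1} holds for $\lambda$. Unwinding the statement of Conjecture \ref{conjecture 1}, this says precisely that whenever $A,B\in\Omega^1(M,\mathfrak{gl}(n,\C))$ and $\Phi,\Psi\in C^\infty(M,\mathfrak{gl}(n,\C))$ satisfy $C_{A,\Phi}^\lambda = C_{B,\Psi}^\lambda$, there exists $u\in\mathcal{G}$ with $(A,\Phi)\cdot u=(B,\Psi)$, which is exactly the assertion of the corollary.

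There is essentially no obstacle at the level of the corollary itself: it is a formal composition of the implications ``Conjecture \ref{conjecture 3} $\Rightarrow$ Conjecture \ref{conjecture 2}'' (Theorem \ref{them_conjecture 2}, valid under the Fourier-degree hypothesis) and ``Conjecture \ref{conjecture 2} $\Rightarrow$ Conjecture \ref{conjecture 1}'' (Theorem \ref{conj2_conj1}, valid for all $\lambda$). The genuine difficulties are internal to those two theorems --- the loop group factorization reducing the $\mathfrak{gl}(n,\C)$ attenuated problem to the unitary one, which is where the degree $\leq 2$ restriction enters, and the pseudolinearization identity for $\lambda$-geodesic flows used in Theorem \ref{conj2_conj1}. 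The only point to check here is that the geometric hypotheses (strict $\lambda$-convexity, nontrapping) and the matrix group $\mathfrak{gl}(n,\C)$ appearing in the corollary match verbatim those in both theorems, so that no additional reduction, regularization, or approximation argument is needed before invoking them.
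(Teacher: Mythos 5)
Your proposal is correct and matches the paper's own justification, which simply notes that the corollary is a direct consequence of Theorem \ref{conj2_conj1} and Theorem \ref{them_conjecture 2} chained together exactly as you describe. No further argument is needed.
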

\noindent
The above corollary is a direct consequence of Theorem \ref{conj2_conj1} and Theorem \ref{them_conjecture 2}.

In the case of magnetic geodesics, we can prove the following injectivity result for the attenuated ray transform for general connections and Higgs fields utilizing \cite[Theorem 1.2]{Ainsworth:2013} and loop group factorization arguments summarized in the earlier discussions and results. We say that $(M,g,\lambda)$ is a simple magnetic surface if $\lambda \in C^{\infty}(M)$, $M$ is compact with smooth strictly magnetic convex boundary and the magnetic flow is nontrapping without conjugate points.
\begin{theorem}[Uniqueness for the attenuated magnetic ray transform]\label{them_conjecture 3}
Let $(M,g,\lambda)$ be a simple magnetic surface. Let $(A,\Phi)$ be an arbitrary attenuation pair $(A, \Phi)$ with $A \in \Omega^1(M, \mathfrak{g l}(n, \mathbb{C}))$ and $\Phi \in C^{\infty}(M, \mathfrak{g l}(n, \mathbb{C}))$. Let $f=f_0+\alpha \in C^\infty(SM,\C^n)$ be a smooth function where $f_0\in C^{\infty}(M,\C^n)$ and $\alpha$ is a $\C^n$-valued $1$-form. If $I_{A,\Phi}^{\lambda}f=0$, then $f_0=\Phi p$ and $\alpha=dp+Ap$ for some $p \in C^\infty(M,\C^n)$ with $\left.p\right|_{\partial M}=0.$
\end{theorem}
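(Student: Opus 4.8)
The plan is to deduce Theorem~\ref{them_conjecture 3} directly from Theorem~\ref{them_conjecture 2} combined with the known unitary result of Ainsworth, observing that the statement to be proved is nothing other than Conjecture~\ref{conjecture 2} specialized to $\lambda$ equal to the magnetic parameter. Thus it suffices to check the two hypotheses of Theorem~\ref{them_conjecture 2}: that $\lambda$ has vertical Fourier degree at most $2$, and that Conjecture~\ref{conjecture 3} holds for this particular $\lambda$.

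First I would verify the Fourier degree condition. By the definition of a simple magnetic surface, $\lambda \in C^\infty(M)$ is pulled back from the base, hence independent of the velocity variable; equivalently $V\lambda = 0$, so $\lambda \in \Omega_0 \subset \bigoplus_{-2 \le k \le 2}\Omega_k$. Moreover a simple magnetic surface is, by our definition, a compact Riemannian surface with smooth strictly $\lambda$-convex (magnetic convex) boundary whose $\lambda$-geodesic (magnetic) flow is nontrapping, so the standing hypotheses of Theorem~\ref{them_conjecture 2} are met.

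Second I would invoke \cite[Theorem~1.2]{Ainsworth:2013}: on a simple magnetic surface, if $A\colon SM \to \mathfrak u(n)$ is a unitary connection, $\Phi\colon M \to \mathfrak u(n)$ is skew-Hermitian, and $I_{A,\Phi}^{\lambda}f = 0$ with $f = f_0 + \alpha$, then $f_0 = \Phi p$ and $\alpha = dp + Ap$ for some $p \in C^\infty(M,\C^n)$ with $p|_{\partial M} = 0$. The ``no conjugate points'' clause built into the definition of a simple magnetic surface is precisely what Ainsworth's Pestov identity together with the holomorphic integrating factor construction requires, so this input is legitimately available. In the language of this paper, this is the assertion that Conjecture~\ref{conjecture 3} holds for our $\lambda$.

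Finally, applying Theorem~\ref{them_conjecture 2} to this $\lambda$ promotes the unitary/skew-Hermitian conclusion to the general $\mathfrak{gl}(n,\C)$ conclusion, i.e.\ Conjecture~\ref{conjecture 2} holds for $\lambda$, which is exactly the statement of Theorem~\ref{them_conjecture 3}. I do not expect a genuine obstacle here: all analytic content is already contained in the loop group factorization underlying Theorem~\ref{them_conjecture 2} and in Ainsworth's unitary theorem. The only point requiring care is bookkeeping, namely confirming that the hypotheses of both inputs are genuinely subsumed by the definition of a simple magnetic surface — in particular that the magnetic $\lambda$ indeed has vertical Fourier degree zero, so that Theorem~\ref{them_conjecture 2} applies with room to spare.
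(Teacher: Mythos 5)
Your proposal is correct and is exactly the paper's argument: the paper deduces Theorem \ref{them_conjecture 3} in one line by combining \cite[Theorem 1.2]{Ainsworth:2013} (Conjecture \ref{conjecture 3} for simple magnetic flows) with Theorem \ref{them_conjecture 2}, using that a magnetic $\lambda \in C^\infty(M)$ lies in $\Omega_0 \subset \bigoplus_{-2\le k\le 2}\Omega_k$. Your version simply spells out the hypothesis-checking that the paper leaves implicit.
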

\begin{corollary}[Uniqueness for the nonabelian magnetic ray transform]\label{them_conjecture 1}
Let $(M,g,\lambda)$ be a simple magnetic surface. Suppose we are given pairs $(A, \Phi)$ and $(B, \Psi)$ with $A, B \in \Omega^1(M, \mathfrak{g l}(n, \mathbb{C}))$ and $\Phi, \Psi \in C^{\infty}(M, \mathfrak{g l}(n, \mathbb{C}))$. If
$$
C_{A, \Phi}^{\lambda}=C_{B, \Psi}^{\lambda},
$$
then there is $u \in \mathcal{G}$ such that $(A, \Phi) \cdot u=(B, \Psi)$.
\end{corollary}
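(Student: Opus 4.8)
The plan is to obtain this statement as an immediate consequence of the two preceding results, with no new analysis required. First I would observe that a simple magnetic surface $(M,g,\lambda)$ meets all the hypotheses imposed in Conjectures \ref{conjecture 1}--\ref{conjecture 3} and in Theorems \ref{conj2_conj1} and \ref{them_conjecture 3}: by definition $\lambda \in C^\infty(M)$, the surface $M$ is compact with smooth strictly magnetic (i.e.\ $\lambda$-)convex boundary, and the magnetic flow is nontrapping; the additional absence of conjugate points is exactly the extra ingredient needed to invoke \cite[Theorem 1.2]{Ainsworth:2013} inside the proof of Theorem \ref{them_conjecture 3}. In particular $\lambda$, viewed as a function on $SM$ pulled back from $M$, lies in $\Omega_0 \subset \bigoplus_{-2\le k\le 2}\Omega_k$, so the Fourier-degree restriction appearing elsewhere is automatically satisfied.

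Next I would record that Theorem \ref{them_conjecture 3} is precisely the assertion that Conjecture \ref{conjecture 2} holds for this $\lambda$: both statements concern an arbitrary attenuation pair $(A,\Phi)$ with $A \in \Omega^1(M,\mathfrak{gl}(n,\C))$ and $\Phi \in C^{\infty}(M,\mathfrak{gl}(n,\C))$, and both conclude that $I_{A,\Phi}^{\lambda}f = 0$ forces $f_0 = \Phi p$ and $\alpha = dp + Ap$ for some $p \in C^\infty(M,\C^n)$ with $p|_{\partial M} = 0$. Feeding this into Theorem \ref{conj2_conj1} then yields that Conjecture \ref{conjecture 1} is true for $\lambda$. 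Finally, applying Conjecture \ref{conjecture 1} to the given pairs $(A,\Phi)$ and $(B,\Psi)$, whose scattering data agree by hypothesis, produces the desired gauge $u \in \mathcal{G}$ with $(A,\Phi)\cdot u = (B,\Psi)$.

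There is essentially no obstacle at this stage: all the substantive work has been carried out upstream, namely the pseudolinearization identity underlying Theorem \ref{conj2_conj1} and the loop group factorization (together with Ainsworth's unitary result) underlying Theorem \ref{them_conjecture 3}. The only point that deserves a line of care in the write-up is the bookkeeping check that the hypotheses of the two input theorems are genuinely met by a simple magnetic surface---in particular that ``strictly magnetic convex'' coincides with ``strictly $\lambda$-convex'' in the terminology fixed earlier, and that the no-conjugate-point hypothesis is consumed only through Theorem \ref{them_conjecture 3} and is not needed a second time in Theorem \ref{conj2_conj1}.
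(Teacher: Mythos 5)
Your proposal is correct and follows essentially the same route as the paper, which likewise obtains the corollary by combining Theorem \ref{them_conjecture 3} (Conjecture \ref{conjecture 2} for simple magnetic flows, via Ainsworth's unitary result and the loop group reduction) with the pseudolinearization argument of Theorem \ref{conj2_conj1}. Your hypothesis bookkeeping --- that $\lambda \in C^\infty(M)$ lies in $\Omega_0 \subset \bigoplus_{-2\le k\le 2}\Omega_k$ and that the no-conjugate-point assumption is consumed only through Ainsworth's theorem --- matches the paper's (terser) justification.
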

\noindent 
The above corollary is a direct consequence of Corollary \ref{them_conjecture_therm 1} and Theorem \ref{them_conjecture 3}.

\subsection{Further discussion on the earlier literature}
There is an extensive literature to study the nonabelian ray transform in the case of $\lambda=0$. For the usual geodesic case, Conjectures \ref{conjecture 1} and \ref{conjecture 2} are proved in $\mathbb{R}^3$, see \cite{Novikov:2002}, and in \cite{Paternain_Salo_Uhl_Zhou:2019} in the case of compact manifolds with strictly convex boundary of dimensions three and higher, and admitting a strictly convex function. This is based on an approach for inverting the local geodesic ray transform \cite{Uhlmann_Vasy:2016}. For the pair $(A,\Phi)$ taking value in $\mathfrak{u}(n)$, Conjectures \ref{conjecture 1} and \ref{conjecture 2} are proved in \cite{Paternain:Salo:Uhlmann:2012} when $\lambda=0$ on a simple nontrapping Riemannian surface. In \cite{Novikov:2002}, Novikov proved the local uniqueness results in $\mathbb{R}^n, n\geq 2$ under the assumption that the attenuation pairs $(A,\Phi)$ are not necessarily compactly supported with certain decay conditions at infinity. However, he constructed an example to show that the global uniqueness result fails in general. Later, a similar to Conjecture \ref{conjecture 1}, an injectivity question for the nonabelian Radon transform has been considered by Eskin in \cite{Eskin:2004} for compactly supported attenuated pairs $(A,\Phi)$ and the main idea of their proof uses the existence of matrix holomorphic integrating factors in the vertical variable. 
We also mention the work of Finch and Uhlmann \cite{Finch_Uhlmann:2001} that proves Conjecture \ref{conjecture 1} in the domains of $\mathbb{R}^2$ for the connections having small curvature.

Under certain restrictions on the size of the attenuation pairs $(A, \Phi)$, the nonabelian ray transform has been first considered in \cite{Vertge:1991, Vertge:2000, Sharafut:2000}
in a manifold setting.
In \cite{Sharafut:2000}, Sharafutdinov established Conjecture \ref{conjecture 1} for the case $\Phi=0$ and assuming that the connections are $C^1$ close to another connection with small curvature. Conjecture \ref{conjecture 2} was proved in \cite{Salo_Uhlmann:2011} for $A=0$ and $n=1$ when $\lambda =0.$
Generic injectivity and stability issues are discussed and some Fredholm alternatives are considered for connections in \cite{Monard_Paternain:2020, Zhou:2017}.
Conjecture \ref{conjecture 1} is proved under negative curvature assumption for $\lambda=0$ in \cite{paternain2021carleman} for $n \geq 2$.
In \cite{Bohr_Paternain:2023}, the authors established the existence of matrix holomorphic integrating factors for any simple surface where they provided a full characterization of the range of the nonabelian X-ray transform.
For the closed Riemannian surfaces, the nonabelian ray transform problems are considered in \cite{Paternain:2013}.

Compared to the usual geodesic case, not much is known in the nonabelian  ray transform literature for $\lambda \neq 0$. In \cite{Ainsworth:2013}, Ainsworth proved Conjectures \ref{conjecture 1} and \ref{conjecture 3} for the magnetic geodesics on simple surfaces with a unitary connection and a skew-Hermitian matrix Higgs field. Recently, Conjectures \ref{conjecture 1} and \ref{conjecture 2} have been established on a compact Riemannian surface under the assumption of negative thermostat curvature in \cite{Assylbekov:Rea:Arxiv:2021}. We are not aware of any studies on the nonabelian and attenuated ray transforms in higher dimensions for more general families of curves, and this appears to be an interesting direction for future research. Finally, we mention that many other integral geometry problems are studied extensively for magnetic and thermostatic flows \cite{Dairbekov:Paternain:2007:Entropy1,Dairbekov:Paternain:2007:II,Dairbekov-Paternain-SU-2007-magnetic-rigidity,Ainsworth:2013,Ainsworth-Assylbekov-2015-magnetic-range,Ainsworth:2015,Assylbekov:Zhou:2017,Assylbekov:Dairbekov:2018,Assylbekov:Rea:Arxiv:2021,mazzucchelli2023general,Muñoz-Thon_2024,cuesta2024smooth,MR4762978}. For a recent survey of inverse problems related to twisted geodesic flows, we refer to \cite{jathar2024inverse}.

\subsection{Organization of the article} In Section \ref{sec_preliminary}, we introduce some basic notations, definitions and properties of twisted geodesic flows on surfaces that will be required in the later sections. Section \ref{sec-ATT} deals with the attenuated and nonabelian ray transform for twisted geodesics on surfaces. The proofs of main results are given in Section \ref{se_proof}. In Appendices \ref{sec:appendix-nontrappingness} and \ref{sec_app2}, we consider some auxiliary results for nontrapping $\lambda$-geodesic flows and the smoothness of the primitive for the kernel of the (matrix) attenuated $\lambda$-ray transforms, respectively.

\section{Preliminaries}\label{sec_preliminary}
Throughout this article, we mainly follow \cite{GIP2D} for the used notations, definitions and some relevant properties. Similar concepts appear earlier in many works, for example, in \cite{Guillemin:Kazhdan:1980, Singer_Thorpe:1976}. Basics related to the twisted geodesic flows can be recalled from \cite{Marry:Paternain:2011:notes}.

Assume $(M,g)$ to be a compact Riemannian surface with smooth boundary and $g$ being a Riemannian metric. Let $SM=\{(x,v)\in TM: \langle v,v\rangle_g=1\}$ be the unit tangent bundle. We denote $\partial_{+}SM$ by the influx boundary where the tangent vectors point inside the manifold and $\partial_{-}SM$ by the outflux boundary where the tangent vectors point outside the manifold. More precisely, these sets are defined by
\begin{equation}\label{out_in_flux}
\partial_{\pm}SM := \{(x,v)\in SM : x\in \partial M, \ \pm \langle v,\nu(x)\rangle_g\geq 0\}  
\end{equation}
where $\nu$ is the inward unit normal to $\partial M.$
For any $\lambda\in C^{\infty}(SM)$, a curve $\gamma$ is called a  $\lambda$-geodesic (or twisted geodesic) if it satisfies the $\lambda$-geodesic equation 
\begin{equation}\label{lam_twis_eq}
  D_{t}\dot{\gamma} = \lambda(\gamma,\dot{\gamma})i\dot{\gamma}  
\end{equation}
where $i$ is the counterclockwise rotation by $90$ degrees \cite[Definition 7.1]{Marry:Paternain:2011:notes}. In general, the term $\lambda(\gamma,\dot{\gamma})i\dot{\gamma}$ represents an external force that pushes a particle out from the usual geodesic trajectory, corresponding to the case $\lambda=0$.
We say that a curve $\gamma$ is a magnetic geodesic if it satisfies
\[
D_t\dot{\gamma}=Y(\dot{\gamma}),
\]
where the Lorentz force $Y: TM \to TM$. The magnetic field $\Omega$, being a closed $2$-form on $M$, is related to the Lorentz force via the unique bundle map
\begin{equation}
\Omega_x(\xi,\eta)=\ip{Y_x(\xi)}{\eta}_g, \quad \forall x \in M, \ \xi,\eta \in T_xM.
\end{equation}
The curve is said to be Gaussian thermostatic (or thermostatic geodesic) if one has the identity
\[
D_t \dot{\gamma}=E(\gamma)-\frac{\langle E(\gamma), \dot{\gamma}\rangle}{|\dot{\gamma}|^2} \dot{\gamma},
\]
where $E$ is a smooth vector field on $M$.
In the case of magnetic geodesics, $\lambda(x,v)$ only depends on $x$. In the case of Gaussian thermostats, $\lambda(x,v)$ depends linearly on $v$.

By abuse of notation, we continue to write $\gamma_{x,v}$ for the twisted geodesic starting at $x$ with the velocity $v$ where $(x,v)\in SM$. We define the $\lambda$-geodesic flow $\phi_t : SM \rightarrow SM$ by
\[
\phi_t(x,v) := (\gamma_{x,v}(t), \dot{\gamma}_{x,v}(t)),
\]
where $\gamma_{x,v}$ is a $\lambda$-geodesic. In our main results, we assume that the manifold $(M,g)$ has nontrapping $\lambda$-geodesic flow, which means that the forward and backward exit times $$\tau^{\pm}(x,v) := \inf\{\, t \geq 0 \,:\, \phi_{\pm t}(x,v) \in \partial SM\,\}$$ of the $\lambda$-geodesics are finite for all $(x,v)\in \inte SM$ and $\tau^{\pm}$ can be extended continuously to $\partial SM$. We also make a strict $\lambda$-convexity assumption on the boundary of the manifold. The boundary $\partial M$ is strictly $\lambda$-convex at a point $x \in \partial M$ if the following inequality holds: 
$$
\sff_x(v, v) >-\left\langle \lambda(x,v)iv, \nu(x)\right\rangle \quad \text { for all } v \in S_x(\partial M),
$$
where $\sff$ is the second fundamental form of the boundary $\partial M.$ We refer the reader to \cite{Jathar:Kar:Railo:2023:arxiv} for a further discussion on strict $\lambda$-convexity. If $(M,g,\lambda)$ is strictly $\lambda$-convex and nontrapping without conjugate points, then we say that the $\lambda$-geodesic flow is simple.

By \cite[Lemma 7.4]{Marry:Paternain:2011:notes}, the infinitesimal generator of $\phi_t$ can be written as $ X +\lambda V,$ where $X$ denote the usual geodesic vector field acting on the unit circle bundle $SM$ and $V$ is the vertical vector field.
Denote $X_{\perp}:=[X,V]$ and that allows one to define $\{X,X_{\perp}, V\}$ as a global frame of $T(SM)$. The following structure equations hold: \begin{equation}\label{eq: basic commutators}
    [X,V] = X_\bot, \quad [X_\bot,V] =-X,\quad [X,X_\bot] =-KV
\end{equation}
where $K$ is the Gaussian curvature of the Riemannian surface $(M,g)$, see \cite[Lemma 3.5.5]{GIP2D}. Since $\{X,-X_\bot,V\}$ forms a positively oriented orthonormal frame, it follows that the unit sphere bundle $SM$ possesses a unique Riemmanian metric $G$, called the Sasaki metric. 
\begin{remark} We note that in two dimensions, if you have a smooth flow $\phi_t(x,v) =(\gamma_{x,v}(t),\dot{\gamma}_{x,v}(t))$ on $SM$, given by a smooth family of unit speed curves $(x,v) \mapsto \gamma_{x,v}$ on $M$, then the generator of $\phi_t$ is given as $X+\lambda V$ for some $\lambda \in C^\infty(SM)$ \cite[Theorem 1.4]{Assylbekov:Dairbekov:2018}. Thus, our setting in general covers all families of smooth unit speed curves on $M$. There are more general flows on $SM$, but they may not be natural for studying ray transforms on $M$.
\end{remark}

The volume form of the metric satisfies $dV_G = d\Sigma^3$, see \cite[Lemma 3.5.11]{GIP2D}. Here $d\Sigma^3$ represents a measure on $SM$, defined by 
\begin{equation}
    d\Sigma^3 := dV^2 \wedge dS_x
\end{equation}
is usually called the Liouville form, where $dV^2$ is the volume form of $(M,g)$ and $dS_x$ is the volume form of $(S_xM,g_x)$ for any $x \in M$.
For any $u,v:SM\to \C^n$, we define the following $L^2$-inner product
\[(u,v) =\int_{SM}\langle u,v\rangle_{\mathbb \C^n}\,d\Sigma^3.\]
We also define the spaces of eigenvectors of $V$ by requiring 
\begin{equation}\label{eq:GK-eig}
    H_k := \{\,u \in L^2(SM,\C^n)\,;\, -iVu = ku\,\}, \quad \Omega_k := \{\,u\in C^\infty(SM,\C^n)\,;\, -iVu=ku\,\}
\end{equation}
for any integer $k \in \Z$. Now we remark that the space $L^{2}(SM,\C^n)$ has a following orthogonal decomposition of direct sum 
\[L^{2}(SM,\C^n)=\bigoplus_{k\in\mathbb Z}H_{k}.\]
For any $u \in L^2(SM)$, there is a unique $L^2$-orthogonal decomposition
\begin{equation}\label{eq:orthogonal}
    u=\sum_{k=-\infty}^\infty u_k, \quad \norm{u}^2 = \sum_{k=-\infty}^\infty \norm{u_k}^2, \quad u_k \in H_k.
\end{equation}
 If $u \in C^\infty(SM)$, then each $u_k \in C^\infty(SM)$ and the series converges in the $C^\infty(SM)$ norm.

For any $I\subset\mathbb{Z}$, we denote by $\oplus_{k\in I}\Omega_{k}$ the set of smooth functions $u$ satisfying $u_{k}=0$ whenever $k\notin I$. Note that the vector fields $X$ and $V$ have the following mapping properties:
\[X:\oplus_{k\geq 0}\Omega_{k}\to \oplus_{k\geq -1}\Omega_{k},\qquad V:\Omega_{k}\to \Omega_{k}\]
see for instance \cite{GIP2D}. 

If we assume that $\lambda\in C^{\infty}(SM, \mathbb{C})$ has finite degree $m$. Then $\lambda$ can be written as
\[
\lambda=\sum_{k=-m}^{m} \lambda_k \ \in \Omega_{-m}\oplus \cdots \oplus \Omega_{m}.
\]
Notice that $\lambda_{\pm m} V$ has the following mapping properties 
\[
\lambda_{\pm m} V : \Omega_k \rightarrow \Omega_{k\pm m}, \quad k \neq 0,
\]
and $\lambda_{\pm m} V|_{\Omega_0}=0$.
We remark that above smooth functions may take values in complex matrices $\mathfrak{gl}(n,\C)$.

\begin{definition} A function $u:SM\to\C^n$ is said to be (fibrewise) holomorphic if $u_{k}=0$ for all $k<0$. Similarly, $u$ is said to be (fibrewise) antiholomorphic if $u_{k}=0$ for all $k>0$.
\end{definition}
\section{Attenuated and nonabelian twisted ray transforms}\label{sec-ATT}
 Let $(M, g, \lambda), \lambda \in C^{\infty}(S M)$, be a compact Riemannian surface with strictly $\lambda$-convex boundary and nontrapping $\lambda$-geodesic flow. Assume $\mathcal A\in C^{\infty}\left(S M, \mathbb{C}^{n \times n}\right)$. We introduce a closed manifold $(N,g)$ so that $(M,g)$ is isometrically embedded in $(N,g)$ and we extend $\mathcal{A}$ and $\lambda$ smoothly to whole of $N$. On the closed manifold $(N,g)$, the matrix valued function $\mathcal{A}$ defines a smooth cocycle $C$ over the $\lambda$-geodesic flow $\phi_{t}$. We call $C: S N \times \mathbb R \to G L(n, \mathbb{C})$ as a smooth cocycle if it satisfies the matrix ordinary differential equation 
\begin{equation}
    \frac{\mathrm{d} }{\mathrm{d} t}C(x, v, t)+\mathcal{A}\left(\phi_t(x, v)\right) C(x, v, t)=0, \quad C(x, v, 0)=\mathrm{Id}
\end{equation}
along the orbits of $\lambda$-geodesic flows. From the uniqueness of solution of ODEs and $\phi_t$ is $\lambda$-geodesic flow, the function $C$ satisfies 
\begin{align}\label{eq:cocycle:property}
C(x,v,t+s)&=C(\phi_t(x,v),s)C(x,v,t)
\end{align}
 for all $(x,v)\in SN$ and $t,s\in \mathbb R$. The identity \eqref{eq:cocycle:property} explains the name cocycle. 
 
Let $(M, g, \lambda)$, with $\lambda \in C^{\infty}(S M)$, be a compact Riemannian surface with strictly $\lambda$-convex boundary and nontrapping $\lambda$-geodesic flow. From Lemma \ref{lm:open:nontrapped}, the existence of nontrapping $\lambda$-geodesic flow is an open condition. Similarly, from the definition of strictly $\lambda$-convexity, it is also an open condition. Hence, there exists a slightly larger manifold $(M_0, g, \lambda_0)$ engulfing $M$, such that $\lambda_0$ is a smooth extension of $\lambda$ on $M_0$, and $M_0$ has strictly $\lambda_0$-convex boundary and the $\lambda_0$-geodesic flow is nontrapping.
 \begin{definition}\label{def:smooth:lambda:extension}
    Let $(M, g, \lambda)$, $\lambda \in C^{\infty}(SM)$, be a compact Riemannian surface with strictly $\lambda$-convex boundary and nontrapping $\lambda$-geodesic flow. Suppose there exists a larger compact manifold $(M_0, g_0, \lambda_0)$ containing $M$ such that:
    \begin{itemize}
        \item $\lambda_0 \in C^{\infty}(SM_0)$ is a smooth extension of $\lambda$ to $M_0$,
        \item the Riemannian metric $g_0$ is a smooth extension of $g$ to $M_0$
        \item $M_0$ has strictly $\lambda_0$-convex boundary,
        \item $M_0$ has nontrapping $\lambda_0$-geodesic flow.
    \end{itemize}
    Then $(M_0, g_0, \lambda_0)$ is called as a \emph{smooth strictly convex, nontrapping, extension} of $(M, g, \lambda)$.
\end{definition}
Next lemma is a generalisation of \cite[Lemma 5.3.2]{GIP2D} for the cocycle $C$ corresponding to the $\lambda$-geodesic flows $\phi_t$.
\begin{lemma}\label{lm:R:equation}
    Let $(M, g, \lambda)$, $\lambda \in C^{\infty}(S M)$, be a compact Riemannian surface with strictly $\lambda$-convex boundary and nontrapping $\lambda$-geodesic flow. Let $(M_0,g,\lambda_0)$ be a smooth strictly convex, nontrapping, extension as defined in Definition \ref{def:smooth:lambda:extension}. and $\tau_0$ the forward travel time in $M_0$. Define a function $R : SM \to GL(n, \mathbb{C})$ such that
\[R(x,v):=\left[C\left(x, v, \tau_{0}(x, v)\right)\right]^{-1}.\]
Then $R$ is a smooth function and satisfies
\begin{align}\label{eq:R:equation}
    (X+\lambda V)R+\mathcal{A}R=0,\qquad (X+\lambda V)R^{-1}-R^{-1}\mathcal{A}=0.
\end{align}
\end{lemma}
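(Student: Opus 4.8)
The plan is to first record the smoothness of $R$ and then obtain the two transport equations in \eqref{eq:R:equation} as the infinitesimal form of the cocycle identity \eqref{eq:cocycle:property} combined with the flow-covariance of the exit time $\tau_0$. This is the analogue of the proof of \cite[Lemma 5.3.2]{GIP2D}, with the geodesic vector field $X$ replaced by the generator $X+\lambda V$ of the $\lambda$-geodesic flow (by \cite[Lemma 7.4]{Marry:Paternain:2011:notes}, as recalled in Section \ref{sec_preliminary}).

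\emph{Smoothness.} First I would note that $\mathcal{A}$ and the $\lambda_0$-geodesic flow $\phi_t$ on $SM_0$ are smooth, so the cocycle $C(x,v,t)$, being the solution of a linear ODE with smooth coefficients and smooth dependence on the initial point, is smooth in $(x,v,t)$ and takes values in $GL(n,\C)$. Since $M$ lies in the interior of the nontrapping, strictly $\lambda_0$-convex extension $M_0$, the forward exit time $\tau_0$ is smooth on a neighbourhood of $SM$ inside $SM_0$; this is exactly why the engulfing manifold is introduced, as it pushes the glancing set (where $\tau_0$ can fail to be smooth) away from $SM$. Composing, and using that inversion is smooth on $GL(n,\C)$, shows $R(x,v)=[C(x,v,\tau_0(x,v))]^{-1}$ is smooth and $GL(n,\C)$-valued, and in fact the same formula defines a smooth $R$ on a neighbourhood of $SM$ in $SM_0$, so all the derivatives below may be taken there and then restricted to $SM$.

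\emph{The cocycle/exit-time identity and differentiation.} The key step is the relation $\tau_0(\phi_s(x,v))=\tau_0(x,v)-s$ for small $s$. Applying \eqref{eq:cocycle:property} with the pair of times $(s,\tau_0(x,v)-s)$ in place of $(t,s)$ gives $C(x,v,\tau_0(x,v))=C(\phi_s(x,v),\tau_0(x,v)-s)\,C(x,v,s)$, hence
\[
C(\phi_s(x,v),\tau_0(\phi_s(x,v)))=C(x,v,\tau_0(x,v))\,C(x,v,s)^{-1},
\]
and taking inverses $R(\phi_s(x,v))=C(x,v,s)\,R(x,v)$. Since $(X+\lambda V)F(x,v)=\frac{d}{ds}\big|_{s=0}F(\phi_s(x,v))$, differentiating this at $s=0$ and using the defining ODE for $C$ together with $C(x,v,0)=\mathrm{Id}$ yields $(X+\lambda V)R=\big(\frac{d}{ds}\big|_{s=0}C(x,v,s)\big)R=-\mathcal{A}R$, the first identity in \eqref{eq:R:equation}. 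The second then follows by applying the derivation $X+\lambda V$ to $RR^{-1}=\mathrm{Id}$: from $((X+\lambda V)R)R^{-1}+R\,(X+\lambda V)R^{-1}=0$ we get $R\,(X+\lambda V)R^{-1}=\mathcal{A}$, i.e.\ $(X+\lambda V)R^{-1}-R^{-1}\mathcal{A}=0$.

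\emph{Main obstacle.} The computation itself is purely formal once the two analytic inputs are in hand, namely the smoothness of $\tau_0$ on a neighbourhood of $SM$ and the covariance $\tau_0\circ\phi_s=\tau_0-s$; these are precisely why the argument is run on the strictly convex nontrapping extension $M_0$ rather than on $M$ directly. Both are standard facts for nontrapping flows with strictly convex boundary and are available for $\lambda$-geodesic flows from the results cited in Section \ref{sec_preliminary} and Appendix \ref{sec:appendix-nontrappingness}; I would invoke them and otherwise follow \cite[Lemma 5.3.2]{GIP2D} verbatim.
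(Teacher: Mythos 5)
Your proposal is correct and follows essentially the same route as the paper's own proof: smoothness of $R$ from smoothness of the cocycle and of $\tau_0$ near $SM$ (the reason for passing to the extension $M_0$), the identity $R(\phi_t(x,v))=C(x,v,t)R(x,v)$ obtained from the cocycle property together with $\tau_0\circ\phi_t=\tau_0-t$, differentiation at $t=0$, and the product rule applied to $RR^{-1}=\mathrm{Id}$ for the second equation. No gaps.
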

\begin{proof} Recall that $\tau_0|_{SM}$ is a smooth function on $S M$ (cf. \cite[Lemma 4.16]{Jathar:Kar:Railo:2023:arxiv}), and the cocycle is also a smooth function. Therefore, the function $R$ is smooth on $S M$.
Replacing $\tau_0(\phi_t(x,v))=\tau_0(x,v)-t$ in \eqref{eq:cocycle:property}, we can write
\begin{align*}
    C\left(x, v, t+\tau_0-t\right)=C\left(\varphi_t(x, v), \tau_0(x,v)-t\right) C(x, v, t),
\end{align*} 
for all $t\in [0,\tau_{0}(x,v)]$ and simplifying further yields 
\begin{align*}
    \left[ C\left(x, v, \tau_0(x,v)\right)\right]\left[C(x, v, t)\right]^{-1}=[C\left(\varphi_t(x, v), \tau_0({x,v})-t\right)].
\end{align*}
By the definition of $R$, we obtain
\begin{align*}
    R(\phi_t(x,v)) &= \left[C(\phi_t(x,v), \tau_0(\phi_t(x,v)))\right]^{-1} = \left[C(\phi_t(x,v), \tau_0(x,v) - t)\right]^{-1} \\
    &= C(x, v, t) \left[C\left(x, v, \tau_0(x,v)\right)\right]^{-1} = C(x, v, t) R(x,v).
\end{align*}
Differentiating the above identity with respect $t$ and evaluating at $t=0$, we have
\begin{align*}
 (X+\lambda V)R(x,v)&=\left.\frac{d}{dt} R(\phi_t(x,v)) \right|_{t=0}  =\left.\frac{d}{dt}C(x, v, t)\right|_{t=0}R(x,v)\\
 &=-\left.\mathcal{A}\left(\phi_t(x, v)\right) C(x, v, t)\right|_{t=0} R(x,v)\\
 &=-\mathcal{A}(x,v)R(x,v).
\end{align*}
To prove the other identity, we first compute
\begin{align*}
  0 &= \frac{d}{dt} \left( R(\phi_t(x,v)) \left( R(\phi_t(x,v))\right)^{-1} \right) \\
    &= \left( \frac{d}{dt} R(\phi_t(x,v)) \right) \left(R(\phi_t(x,v))\right)^{-1} + R(\phi_t(x,v)) \left( \frac{d}{dt} \left(R(\phi_t(x,v))\right)^{-1} \right)
\end{align*}
Then we have
\begin{align*}
(X + \lambda V) R^{-1}(x, v) &= \left. \frac{d}{dt} \left( R\left( \phi_t(x, v) \right) \right)^{-1} \right|_{t=0} \\
&= - R^{-1}(x, v) (X + \lambda V) R(x, v) R^{-1}(x, v) \\
&= R^{-1}(x, v) \mathcal{A}(x, v),
\end{align*}
which establishes the formula.
\end{proof}
Note that the solution of equations \eqref{eq:R:equation} in the above lemma are usually called the matrix integrating factor.
\begin{definition}[Nonabelian $\lambda$-ray transform]\label{def:scat} Let $(M, g, \lambda)$, $\lambda \in C^{\infty}(S M)$, be a compact Riemannian surface with strictly $\lambda$-convex boundary and nontrapping $\lambda$-geodesic flow.
The $\lambda$-scattering data of $\mathcal A$ is the map 
\[C_{\A}^{\lambda}:\partial_+SM\to GL(n,\C)\]
defined by $C_{\A}^{\lambda}:=U_{+}|_{\partial_+SM}$ where $U_{+}(x,v) := [C(x,v,\tau(x,v))]^{-1}$ satisfies 
\begin{align*}
    (X+\lambda V)U_{+}+\A U_{+}=0,\qquad U_{+}|_{\partial_-SM}=\operatorname{Id}.
\end{align*}
We call $C_{\A}^{\lambda}$ the nonabelian $\lambda$-ray transform of $\A$.
\end{definition}
It is immediate to see that $C_{\mathcal{A}}^{\lambda} \in C^{\infty}\left(\partial_{+} S M, \mathbb{C}^{n \times n}\right)$.
\begin{definition}[Attenuated $\lambda$-ray transform]\label{def:aLray} Let $(M, g, \lambda)$, $\lambda \in C^{\infty}(S M)$, be a compact Riemannian surface with strictly $\lambda$-convex boundary and nontrapping $\lambda$-geodesic flow. The attenuated $\lambda$-ray transform of $f\in C^{\infty}(SM,\C^n)$ is defined by 
\[
I_{\A}^{\lambda}f:=u^f|_{\partial_+SM}
\]
where $u^f:SM\to \C^n$ is a solution to the following transport equation
   \begin{equation}\label{att_trans_eq}
       (X+\lambda V)u^f+\A u^f=-f \text{ in } SM,\qquad u^f|_{\partial_-SM}=0.
   \end{equation}
\end{definition}
Next lemma is a generalisation of \cite[Lemma 3.4]{Paternain:Salo:2020:a}
to the case of $\lambda$-geodesic flow. The lemma allows one to write down the solution of \eqref{att_trans_eq} as an integral representation using a matrix integrating factor.
\begin{lemma}
Let $(M, g, \lambda)$, $\lambda \in C^{\infty}(S M)$, be a compact Riemannian surface with strictly $\lambda$-convex boundary and nontrapping $\lambda$-geodesic flow. If the function $R: S M \rightarrow G L(n, \mathbb{C})$ satisfies $(X+\lambda V) R+\mathcal{A} R=0$, then
$$
u^f(x, v)=R(x, v) \int_0^{\tau(x, v)}\left(R^{-1} f\right)\left(\varphi_t(x, v)\right) d t \ \text { for }(x, v) \in S M .
$$
\end{lemma}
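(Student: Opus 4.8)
The plan is to use $R$ as an integrating factor to remove the zeroth order term $\A u^f$ from the transport equation \eqref{att_trans_eq}, reducing the problem to a transport equation with no zeroth order term, which can then be solved by direct integration along the $\lambda$-geodesic flow. First I would record the companion identity $(X+\lambda V)R^{-1}=R^{-1}\A$: this follows by differentiating $RR^{-1}=\mathrm{Id}$ along the flow and using $(X+\lambda V)R=-\A R$, exactly the computation already carried out in the proof of Lemma \ref{lm:R:equation}. Next I would set $h:=R^{-1}u^f$ and apply the Leibniz rule for the first order operator $X+\lambda V$ together with \eqref{att_trans_eq}; the two terms containing $\A u^f$ cancel and one is left with $(X+\lambda V)h=-R^{-1}f$ in $SM$, with the boundary condition $h|_{\partial_-SM}=0$ inherited from $u^f|_{\partial_-SM}=0$.

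To solve this reduced equation explicitly, recall that $X+\lambda V$ is the infinitesimal generator of $\varphi_t$, so a solution satisfies $\frac{d}{dt}h(\varphi_t(x,v))=-(R^{-1}f)(\varphi_t(x,v))$ along each flow line. Integrating from $t=0$ to the forward exit time $t=\tau(x,v)$, where $\varphi_t(x,v)$ reaches $\partial_-SM$ and $h$ vanishes, yields the candidate $w(x,v):=\int_0^{\tau(x,v)}(R^{-1}f)(\varphi_t(x,v))\,dt$. I would then verify directly that $w$ solves the reduced equation: using the flow identity $\varphi_t\circ\varphi_s=\varphi_{t+s}$, the relation $\tau(\varphi_s(x,v))=\tau(x,v)-s$ valid for $0\le s\le \tau(x,v)$, and the substitution $r=t+s$, one obtains $w(\varphi_s(x,v))=\int_s^{\tau(x,v)}(R^{-1}f)(\varphi_r(x,v))\,dr$, whose derivative at $s=0$ is $-(R^{-1}f)(x,v)$; and $w|_{\partial_-SM}=0$ since $\tau$ vanishes there.

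It remains to argue uniqueness: if $h_1,h_2$ both solve $(X+\lambda V)h=-R^{-1}f$ with zero data on $\partial_-SM$, then $(X+\lambda V)(h_1-h_2)=0$, so $h_1-h_2$ is constant along every orbit and vanishes at the forward exit point lying in $\partial_-SM$; hence $h_1\equiv h_2$. Consequently $h=w$ and $u^f=Rw$, which is precisely the asserted formula $u^f(x,v)=R(x,v)\int_0^{\tau(x,v)}(R^{-1}f)(\varphi_t(x,v))\,dt$. I do not expect a substantive obstacle: this is the standard integrating-factor argument, and the only delicate point is regularity---ensuring $\tau|_{SM}$, and hence $w$ and $u^f$, are smooth in the interior of $SM$ and well behaved near the glancing set so that the pointwise ODE manipulations are legitimate. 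This is exactly what the strict $\lambda$-convexity and nontrapping hypotheses deliver, in the same way that the smoothness of $\tau_0$ was used in Lemma \ref{lm:R:equation} (cf. \cite{Jathar:Kar:Railo:2023:arxiv}).
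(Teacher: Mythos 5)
Your proposal is correct and follows essentially the same route as the paper: conjugate by the integrating factor $R$ to get $(X+\lambda V)(R^{-1}u^f)=-R^{-1}f$ with vanishing data on $\partial_-SM$, then integrate along the flow up to the exit time. The paper simply states the conclusion ``immediately'' after this reduction, whereas you spell out the verification and uniqueness steps; there is no substantive difference.
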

\begin{proof}
Using $(X+\lambda V) R^{-1}=R^{-1} \mathcal{A}$, see Lemma \ref{lm:R:equation}, and $(X+\lambda V) u^f+\mathcal{A} u^f=-f$, we obtain
\begin{align}
    (X+\lambda V)(R^{-1} u^f  )&= (X+\lambda V)(R^{-1} )u^f+R^{-1} (X+\lambda V)u^f\notag\\
    &= R^{-1} \mathcal{A} u^f-R^{-1}(\A u^f +f)=-R^{-1}f. \label{eq:p1:1}
\end{align}
Since $R^{-1}u^f|_{\pnsm}=0$, one immediately has
\begin{align}\label{eq:u:int:form:R}
    u^f(x,v)=R(x, v) \int_0^{\tau(x, v)}\left(R^{-1} f\right)\left(\varphi_t(x, v)\right) d t \ \text { for }(x, v) \in S M
\end{align}
and hence the lemma follows.
\end{proof}
We state a regularity result for the transport equation for $\lambda\in C^{\infty}(SM)$. In the case where $\lambda=0$, this is proved in \cite[Proposition 5.2]{Paternain:Salo:Uhlmann:2012}, and when $\lambda \in C^{\infty}(M)$, it is proved in \cite[Proposition 4.3]{Ainsworth:2013}. For the thermostatic case, see \cite[Proposition 4.3]{Assylbekov:Rea:Arxiv:2021}. The proof is similar in nature and, for the sake of completeness, is given in Appendix \ref{sec_app2}.
\begin{lemma}\label{lm:reg:u_A_Lam} 
Let $(M, g, \lambda)$, $\lambda \in C^{\infty}(S M)$, be a compact Riemannian surface with strictly $\lambda$-convex boundary and nontrapping $\lambda$-geodesic flow. If $I_\A^\lambda(f)=0$ for $f\in C^{\infty}(SM;\mathbb C^n)$, then $u^f:SM \to \mathbb C^n$ is a smooth function.  
\end{lemma}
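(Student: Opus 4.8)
The plan is to show that $u^f$ is smooth in the interior of $SM$ by an elliptic-regularity/transport argument, and then separately establish smoothness up to the boundary. Since $I_\A^\lambda(f) = 0$ means $u^f|_{\partial_+ SM} = 0$, and by definition $u^f|_{\partial_- SM} = 0$, the function $u^f$ vanishes on \emph{all} of $\partial SM$. The key point is that $u^f$ extends by zero across $\partial M$: working in the slightly larger manifold $(M_0, g_0, \lambda_0)$ of Definition \ref{def:smooth:lambda:extension}, extend $\A$ and $f$ smoothly, and let $\tilde u$ solve the transport equation $(X + \lambda_0 V)\tilde u + \A \tilde u = -\tilde f$ on $SM_0$ with $\tilde u|_{\partial_- SM_0} = 0$. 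On $SM$ one has $\tilde u = u^f$ where a $\lambda$-geodesic segment in $M_0$ only meets $M$ in a single sub-segment (using that $M$ is $\lambda$-convex, so $\lambda$-geodesics enter and leave $M$ cleanly); more to the point, the hypothesis $u^f|_{\partial_+SM}=0$ forces $\tilde u$ to vanish on the part of $SM_0\setminus SM$ reached after exiting $M$, so $\tilde u$ restricted near $\overline{SM}$ is the zero-extension of $u^f$. Thus it suffices to prove $\tilde u \in C^\infty$ near $SM$, i.e. to prove interior regularity for the transport equation with smooth data.

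The central step is interior smoothness, and this is where the $\lambda$-geodesic flow structure is used. I would mimic the argument of \cite[Proposition 5.2]{Paternain:Salo:Uhlmann:2012} and \cite[Proposition 4.3]{Ainsworth:2013}, whose adaptation to $\lambda \in C^\infty(SM)$ is exactly what Appendix \ref{sec_app2} is announced to contain. The idea is: the operator $X + \lambda V$ is a smooth nonvanishing real vector field on $SM_0$ (the infinitesimal generator of the flow $\phi_t$), so along its flow lines $u^f$ is as regular as the data; the difficulty is regularity in the directions transverse to the flow. One uses that $u^f$ solves a first-order ODE along each orbit with smooth coefficients and smooth inhomogeneity, together with the integral representation $u^f(x,v) = R(x,v)\int_0^{\tau_0(x,v)}(R^{-1}\tilde f)(\phi_t(x,v))\,dt$ from the preceding lemma, where $R$ is the smooth matrix integrating factor of Lemma \ref{lm:R:equation}. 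Since $R$, $R^{-1}$, $\tilde f$ and $\phi_t$ are all smooth, and since $\tau_0|_{SM}$ is smooth by \cite[Lemma 4.16]{Jathar:Kar:Railo:2023:arxiv}, the only issue is that $\tau_0$ (the exit time from $M_0$, or from a shrunk copy) is only Lipschitz across boundary tangencies — this is handled by working in the engulfing manifold $M_0$, where the relevant travel time \emph{is} smooth on $SM$, so the representation exhibits $u^f$ as a composition and integral of smooth functions, hence smooth on $SM$ (including $\partial SM$).

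The main obstacle I anticipate is bookkeeping around the extension: one must verify carefully that the vanishing of $u^f$ on $\partial_+ SM$ propagates to give vanishing of $\tilde u$ on the ``outside collar'' $SM_0 \setminus \inte(SM)$ along forward orbits, so that $\tilde u$ genuinely agrees with the zero-extension of $u^f$ in a full neighborhood of $\overline{SM}$; this uses nontrappingness and strict $\lambda$-convexity of $\partial M$ to control how orbits cross $\partial M$ (an orbit through an interior point of $M$ exits $M$ once and then stays outside until it exits $M_0$, because strict $\lambda$-convexity forbids re-entry for nearby orbits). Once this is in place, the regularity is purely a consequence of the smoothness of the integrating factor $R$ and the exit time $\tau_0$ on $SM$, both already available from Lemma \ref{lm:R:equation} and the cited travel-time regularity, so no genuinely new estimate is needed — the content is the reduction, and the details are deferred to Appendix \ref{sec_app2}.
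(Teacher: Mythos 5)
Your overall strategy (pass to the engulfing manifold $M_0$, exploit the smoothness of the extended exit time $\tau_0$ on $SM$, and use the integral representation with the matrix integrating factor $R$) matches the first half of the paper's argument, but there is a genuine gap at the central step. You need the extension $\tilde f$ of $f$ to be \emph{smooth} for the formula $\tilde u = R\int_0^{\tau_0}(R^{-1}\tilde f)(\phi_t)\,dt$ to exhibit a smooth function on $SM$, yet you need $\tilde f$ to vanish on $M_0\setminus M$ for $\tilde u$ to coincide with $u^f$ on $SM$: with a generic smooth extension, $\tilde u(x,v)-u^f(x,v)$ equals the value of $\tilde u$ at the exit point $\phi_{\tau(x,v)}(x,v)\in\partial_-SM$ transported back along the orbit, which is not zero. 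These two requirements are incompatible, since the zero extension of $f$ is not smooth unless $f$ vanishes to infinite order at $\partial M$. (Note also that the integral representation you quote from the preceding lemma has upper limit $\tau(x,v)$, not $\tau_0(x,v)$; replacing one by the other is exactly the point at issue.) So the function your argument proves smooth is $\tilde u$, not $u^f$.

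The paper resolves this by accepting that the smooth particular solution $h(x,v)=\int_0^{\tau_0(x,v)}(R^{-1}f)(\phi_t(x,v))\,dt$ differs from $R^{-1}u^f$, and by showing that the difference $h-R^{-1}u^f$ is a first integral of $X+\lambda V$ whose trace on all of $\partial SM$ equals $h|_{\partial SM}$ (this is where the hypothesis $I^\lambda_\A f=0$ enters, giving $u^f|_{\partial SM}=0$); smoothness of that difference is then supplied by Lemma \ref{lm:space}. That lemma is the real content missing from your proposal: a flow-invariant function with smooth scattering data is smooth on $SM$, and proving this at the glancing set $\partial_0SM$ --- where the travel times $\tau^{\pm}$ of $M$ itself (not of $M_0$) have only square-root regularity --- requires writing the invariant extension as an even function of $\sqrt{a(x,v)}$ and invoking Whitney's theorem (Lemmas \ref{lm:taupm} and \ref{lm:space}). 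Your claim that working in $M_0$ removes all travel-time singularities is true only for $\tau_0$; the non-smoothness of $\tau^{\pm}$ at $\partial_0SM$ reappears in the correction term and cannot be avoided. By contrast, the issue you flag about orbits re-entering $M$ is not where the difficulty lies.
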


\subsection{Pseudolinearization identity}
Let $\mathcal{A}, \mathcal{B} \in C^{\infty}(SM, \mathbb{C}^{n \times n})$ be two smooth matrix-valued functions on the unit tangent bundle $SM$. Our aim in this subsection is to establish a relation between $C^{\lambda}_{\mathcal{A}}$ and $C^{\lambda}_{\mathcal{B}}$ using a specific attenuated $\lambda$-ray transform. To this end, we define a map $E(\A,\B) : SM \to \operatorname{End}(\mathbb{C}^{n \times n})$ by setting 
\[E(\A,\B)U:=\A U-U\B,\]
where $E(\A,\B)$ denotes the linear endomorphisms of $\mathbb{C}^{n\times n}.$
Next proposition is a generalisation of \cite[Proposition 3.5]{Paternain:Salo:2020:a} to the case of the attenuated $\lambda$-ray transform.
\begin{lemma}\label{lm:psedo:ide}
Let $(M, g, \lambda)$, $\lambda \in C^{\infty}(S M)$, be a compact Riemannian surface with strictly $\lambda$-convex boundary and nontrapping $\lambda$-geodesic flow. Given any two matrix attenuation functions $\A,\B\in C^{\infty}(SM,\C^{n\times n})$, we have
\begin{equation}\label{eq:psedo:ide}    C_{\A}^{\lambda}\left[C^{\lambda}_{\B}\right]^{-1}=\operatorname{Id}+I_{E(\A,\B)}^{\lambda}(\A-\B)
\end{equation}
where $I_{E(\A,\B)}^{\lambda}$ denotes the attenuated $\lambda$-ray transform with matrix attenuation $E(\A,\B)$.
\end{lemma}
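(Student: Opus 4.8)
The plan is to track the transport equation satisfied by the quotient of the two matrix integrating factors, and then to recognise that quotient, minus the identity, as the solution of the transport equation defining an attenuated $\lambda$-ray transform. Let $U_\A, U_\B : SM \to GL(n,\C)$ be the solutions of
$$(X+\lambda V)U_\A + \A U_\A = 0, \quad U_\A|_{\pnsm} = \id, \qquad (X+\lambda V)U_\B + \B U_\B = 0, \quad U_\B|_{\pnsm} = \id,$$
so that, by Definition \ref{def:scat}, $C_\A^\lambda = U_\A|_{\psm}$ and $C_\B^\lambda = U_\B|_{\psm}$. Since $U_\B$ equals the inverse of the cocycle evaluated at the finite exit time, $U_\B^{-1}$ is a well-defined smooth map into $GL(n,\C)$, and therefore $W := U_\A U_\B^{-1}$ is a smooth map $SM \to GL(n,\C)$ with $W|_{\psm} = C_\A^\lambda\,[C_\B^\lambda]^{-1}$.

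Next I would compute $(X+\lambda V)W$. As $X+\lambda V$ is a first-order differential operator it obeys the Leibniz rule, and differentiating $U_\B U_\B^{-1} = \id$ gives $(X+\lambda V)(U_\B^{-1}) = -U_\B^{-1}\bigl((X+\lambda V)U_\B\bigr)U_\B^{-1} = U_\B^{-1}\B$. Hence
$$(X+\lambda V)W = (-\A U_\A)U_\B^{-1} + U_\A(U_\B^{-1}\B) = -\A W + W\B = -E(\A,\B)W,$$
while $W|_{\pnsm} = \id\cdot\id^{-1} = \id$. Setting $w := W - \id$ and using the linearity of $E(\A,\B)$, one obtains
$$(X+\lambda V)w + E(\A,\B)w = -E(\A,\B)\id = -(\A - \B), \qquad w|_{\pnsm} = 0.$$

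Finally, identifying $\C^{n\times n}$ with $\C^{n^2}$, the endomorphism field $E(\A,\B) \in C^\infty(SM,\operatorname{End}(\C^{n\times n}))$ is an admissible matrix attenuation (it depends linearly, hence smoothly, on the smooth data $\A$ and $\B$) and $\A-\B \in C^\infty(SM,\C^{n\times n})$ is an admissible source, so the last displayed equation is precisely the transport equation that defines $u^{\A-\B}$ with attenuation $E(\A,\B)$ in Definition \ref{def:aLray}. By uniqueness of solutions of this transport problem, $w = u^{\A-\B}$, hence $w|_{\psm} = I_{E(\A,\B)}^\lambda(\A-\B)$, and restricting $W = \id + w$ to $\psm$ yields \eqref{eq:psedo:ide}. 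There is no genuine analytic obstruction here: the steps requiring care are only the invertibility of $U_\B$ on all of $SM$, which is built into the cocycle formalism of Section \ref{sec-ATT}, and the bookkeeping of the $\C^{n\times n}$-versus-$\C^{n^2}$ identification needed to view the matrix-valued transport equation as a bona fide instance of Definition \ref{def:aLray}.
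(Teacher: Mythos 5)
Your proposal is correct and follows essentially the same route as the paper: form $U_\A U_\B^{-1}$, derive the transport equation $(X+\lambda V)w + E(\A,\B)w = -(\A-\B)$ for $w = U_\A U_\B^{-1} - \id$ with $w|_{\pnsm}=0$, and identify $w|_{\psm}$ with $I_{E(\A,\B)}^{\lambda}(\A-\B)$ via Definition \ref{def:aLray}. The only differences are cosmetic (your $w$ is the paper's $W$), and your explicit remarks on the invertibility of $U_\B$ and the $\C^{n\times n}$-as-$\C^{n^2}$ identification are sound.
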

\begin{proof}
    Similar to the proof of \cite[Proposition 3.5]{Paternain:Salo:2020:a}, we consider
   \begin{align*}
       \left\{\begin{array}{l}
(X+\lambda V) U_{\A}+\A U_{\A}=0 \\
\left.U_{\A}\right|_{\partial_{-} S M}=\mathrm{Id}
\end{array}\right.
   \end{align*}
   and  \begin{align*}
       \left\{\begin{array}{l}
(X+\lambda V) U_{\B}+\B U_{\B}=0 \\
\left.U_{\B}\right|_{\partial_{-} S M}=\mathrm{Id}.
\end{array}\right.
   \end{align*}
 Using Lemma \ref{lm:R:equation}, we have 
   \begin{align}\label{eq:product-rule}
       (X+\lambda V)(U_\A U_{\B}^{-1})&= [(X+\lambda V)(U_\A)]U_{\B}^{-1}+U_\A[(X+\lambda V)(U_{\B}^{-1})]\\
       &=-\A U_\A U_{\B}^{-1}+U_\A U_{\B}^{-1}\B.
   \end{align}
Simplifying further, we obtain  
\[
  (X + \lambda V) \left[ U_\mathcal{A} U_{\mathcal{B}}^{-1} \right] + \mathcal{A} U_\mathcal{A} U_{\mathcal{B}}^{-1} - U_\mathcal{A} U_{\mathcal{B}}^{-1} \mathcal{B} = 0.
\]
This gives
\[
(X+\lambda V)\left[U_\A U_{\B}^{-1}-\operatorname{Id}\right]+ \A (U_\A U_{\B}^{-1}-\operatorname{Id}) - (U_\A U_{\B}^{-1}-\operatorname{Id})\B=-\A+\B
\]
and hence if we define $W := U_\A U_{\B}^{-1}-\operatorname{Id}$, then $W$ satisfies
\begin{align}\label{eq:W:attenuated:transport}
      \begin{split}
           \left\{\begin{array}{l}
(X+\lambda V)W + E(\A,\B)W=-(\A-\B) \\
\left.W\right|_{\partial_{-} S M}=0.
\end{array}\right.
      \end{split}
   \end{align}
Using Definition \ref{def:aLray}, we can write
\begin{equation}
I_{E(\A,\B)}^{\lambda}(\A - \B)=W|_{\psm} = U_\A U_{\B}^{-1} - \operatorname{Id}|_{\partial_{+}SM}.
\end{equation}
Since $C^{\lambda}_\A = U_\A|_{\partial_{+}SM}$ and $C^{\lambda}_\B = U_\B|_{\partial_{+}SM}$, see Definition \ref{def:scat}, the above identity becomes
\begin{equation}
I_{E(\A,\B)}^{\lambda}(\A - \B) = C^{\lambda}_\A \left[C^{\lambda}_\B\right]^{-1} - \operatorname{Id}
\end{equation}
and hence the lemma follows.
\end{proof}
The identity \eqref{eq:psedo:ide} is often called the pseudolinearization  identity.  The following proposition generalizes \cite[Proposition 3.7]{Paternain:Salo:2020:a} to the case of $\lambda$-geodesics.
\begin{proposition}\label{proposition:gaugeU}
Let $(M, g, \lambda)$, $\lambda \in C^{\infty}(S M)$, be a compact Riemannian surface with strictly $\lambda$-convex boundary and nontrapping $\lambda$-geodesic flow. Given any two matrix attenuation functions $\mathcal{A}, \mathcal{B} \in C^{\infty}\left(S M, \mathbb{C}^{n \times n}\right)$, we have $C^{\lambda}_{\mathcal{A}}=C^{\lambda}_{\mathcal{B}}$ if and only if there exists a smooth $U: S M \rightarrow G L(n, \mathbb{C})$ with $\left.U\right|_{\partial S M}=\mathrm{Id}$ such that
$$
\mathcal{B}=U^{-1} (X+\lambda V) U+U^{-1} \mathcal{A} U .
$$
\end{proposition}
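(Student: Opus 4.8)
The plan is to prove both directions by exploiting the transport equation characterization of the scattering data and the uniqueness theory for ODEs along $\lambda$-geodesics. The "if" direction is the easy computation: suppose $\mathcal{B} = U^{-1}(X+\lambda V)U + U^{-1}\mathcal{A}U$ for some smooth $U:SM\to GL(n,\C)$ with $U|_{\partial SM}=\mathrm{Id}$. Let $U_{\mathcal{A}}$ solve $(X+\lambda V)U_{\mathcal{A}} + \mathcal{A}U_{\mathcal{A}}=0$ with $U_{\mathcal{A}}|_{\partial_-SM}=\mathrm{Id}$. I would set $W := U^{-1}U_{\mathcal{A}}$ and compute, using the product rule for $X+\lambda V$ (as in Lemma \ref{lm:R:equation} and the derivation of \eqref{eq:product-rule}), that $(X+\lambda V)W + \mathcal{B}W = 0$. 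Indeed $(X+\lambda V)(U^{-1}U_{\mathcal{A}}) = (X+\lambda V)(U^{-1})\,U_{\mathcal{A}} + U^{-1}(X+\lambda V)U_{\mathcal{A}} = -U^{-1}[(X+\lambda V)U]U^{-1}U_{\mathcal{A}} - U^{-1}\mathcal{A}U_{\mathcal{A}}$, and substituting $(X+\lambda V)U = UB - \mathcal{A}U$ collapses this to $-\mathcal{B}W$. Since $W|_{\partial_-SM} = \mathrm{Id}|_{\partial_-SM}\cdot\mathrm{Id} = \mathrm{Id}$, uniqueness of ODE solutions gives $W = U_{\mathcal{B}}$, hence $C^\lambda_{\mathcal{B}} = U_{\mathcal{B}}|_{\partial_+SM} = (U^{-1}U_{\mathcal{A}})|_{\partial_+SM} = \mathrm{Id}\cdot U_{\mathcal{A}}|_{\partial_+SM} = C^\lambda_{\mathcal{A}}$, using $U|_{\partial SM}=\mathrm{Id}$.

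For the "only if" direction, assume $C^\lambda_{\mathcal{A}} = C^\lambda_{\mathcal{B}}$. The natural candidate is $U := U_{\mathcal{A}}U_{\mathcal{B}}^{-1}$, where $U_{\mathcal{A}}, U_{\mathcal{B}}$ are the solutions from the proof of Lemma \ref{lm:psedo:ide}. First, $U|_{\partial_-SM} = \mathrm{Id}$ trivially from the boundary conditions, and $U|_{\partial_+SM} = C^\lambda_{\mathcal{A}}[C^\lambda_{\mathcal{B}}]^{-1} = \mathrm{Id}$ by hypothesis, so $U|_{\partial SM}=\mathrm{Id}$. To see $U$ is smooth on all of $SM$: from Lemma \ref{lm:psedo:ide} we have $W = U - \mathrm{Id} = I^\lambda_{E(\mathcal{A},\mathcal{B})}(\mathcal{A}-\mathcal{B})$ as a boundary value, but more to the point, $W$ solves the transport equation \eqref{eq:W:attenuated:transport} with vanishing scattering data $W|_{\partial_+SM}=0$; by the regularity Lemma \ref{lm:reg:u_A_Lam} applied with attenuation $E(\mathcal{A},\mathcal{B})$ and source $-(\mathcal{A}-\mathcal{B})$, the solution $W$ is smooth on $SM$, hence so is $U$. (One should note $U$ is $GL(n,\C)$-valued since $U_{\mathcal{A}}, U_{\mathcal{B}}$ are, being matrix integrating factors of cocycles.) Finally, from \eqref{eq:product-rule} we already computed $(X+\lambda V)(U_{\mathcal{A}}U_{\mathcal{B}}^{-1}) = -\mathcal{A}U_{\mathcal{A}}U_{\mathcal{B}}^{-1} + U_{\mathcal{A}}U_{\mathcal{B}}^{-1}\mathcal{B}$, i.e. $(X+\lambda V)U = -\mathcal{A}U + U\mathcal{B}$; rearranging gives $U\mathcal{B} = (X+\lambda V)U + \mathcal{A}U$, and left-multiplying by $U^{-1}$ yields $\mathcal{B} = U^{-1}(X+\lambda V)U + U^{-1}\mathcal{A}U$, as desired.

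The only genuinely delicate point is the smoothness of $U$ up to and across the boundary in the "only if" direction: a priori $U_{\mathcal{A}}$ and $U_{\mathcal{B}}$ are only smooth in the interior and one-sided smooth at $\partial_-SM$, and their quotient could be badly behaved near the glancing region $\partial_0 SM$ of the $\lambda$-geodesic flow. This is exactly the content that Lemma \ref{lm:reg:u_A_Lam} is designed to handle — the vanishing of the scattering data is what propagates smoothness globally — so the argument reduces to invoking that lemma for the endomorphism-valued attenuation $E(\mathcal{A},\mathcal{B})$ acting on $\C^{n\times n}\cong\C^{n^2}$. Everything else is the ODE product-rule bookkeeping already carried out in Lemmas \ref{lm:R:equation} and \ref{lm:psedo:ide}, together with ODE uniqueness along the (nontrapping) $\lambda$-geodesic flow.
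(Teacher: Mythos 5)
Your proposal is correct and follows essentially the same route as the paper: the ``if'' direction is the same product-rule computation (you work with $U^{-1}U_{\mathcal{A}}$ and identify it with $U_{\mathcal{B}}$, whereas the paper equivalently identifies $UU_{\mathcal{B}}$ with $U_{\mathcal{A}}$), and the ``only if'' direction takes $U=U_{\mathcal{A}}U_{\mathcal{B}}^{-1}$, gets smoothness from Lemma \ref{lm:reg:u_A_Lam} applied to $W=U-\mathrm{Id}$ via the pseudolinearization identity, and rearranges \eqref{eq:product-rule}, exactly as in the paper. The only blemish is the typo $UB$ for $U\mathcal{B}$ in the first display of your argument.
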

\begin{proof}
Let $U$ be a smooth function satisfying the given conditions and define $Z:=U U_{\mathcal{B}}$.
Now, 
\begin{align*}
    (X + \lambda V) Z + \mathcal{A} Z &= \left( (X + \lambda V) U \right) U_{\mathcal{B}} + U \left( (X + \lambda V) U_{\mathcal{B}} \right) + \mathcal{A} U U_{\mathcal{B}} \\
    &= \left( U \mathcal{B} - \mathcal{A} U \right) U_{\mathcal{B}} - U \mathcal{B} U_{\mathcal{B}} + \mathcal{A} U U_{\mathcal{B}} \\
    &= 0.
\end{align*}
That is $Z$ satisfies both $(X+\lambda V) Z+$ $\mathcal{A} Z=0$ and $\left.Z\right|_{\partial_{-} S M}=\mathrm{Id}$. It follows that $Z=U_{\mathcal{A}}$ and consequently $C_{\mathcal{A}}=C_{\mathcal{B}}$.

Conversely, assume that the nonabelian $\lambda$-ray transforms agree for $\mathcal{A}$ and $\mathcal{B}$. In the proof of Proposition \ref{lm:psedo:ide}, the function $W=U_{\mathcal{A}} U_{\mathcal{B}}^{-1}-\mathrm{Id}$ has a zero boundary value. By Lemma \ref{lm:reg:u_A_Lam}, $W$ is smooth. Therefore, $U:=U_{\mathcal{A}}U^{-1}_{\mathcal{B}}=W+\operatorname{Id} $ is smooth and the function $U$ satisfies $U|_{\partial_{-}SM} = \mathrm{Id}$. Using \eqref{eq:product-rule}, we obtain
\begin{align*}
     U^{-1}(X+\lambda V)U + U^{-1}\mathcal{A} U&= U_{\B}U_{\A}^{-1}(X+\lambda V)(U_\A U_{\B}^{-1})+U_{\B}U_{\A}^{-1}\A U_\A U_{\B}^{-1}\\
     &=U_{\B}U_{\A}^{-1}(-\A U_\A U_{\B}^{-1}+U_\A U_{\B}^{-1}\B)+U_{\B}U_{\A}^{-1}\A U_\A U_{\B}^{-1}\\
     &=\B.
\end{align*}
That is $U$ satisfies the required equation.  
\end{proof}

\begin{remark} All the results in this section would generalize to smooth families of unit speed curves on the higher dimensional manifolds as well, when certain natural assumptions were made. The related flow should be smooth, nontrapping, and boundary should be strictly convex with respect to the flow. We do not pursue this matter further here, as the main results are specialized to surfaces, and we want to keep the presentation as straightforward as possible.
\end{remark}

\section{Proofs of Main results}\label{se_proof}
In \cite[Lemma 5.1]{Paternain:Salo:2020:a}, it is proved for any skew-Hermitian $\B\in C^{\infty}(SM,\fu(n))$ with $\B\in \bigoplus_{k \geq-1} \Omega_k$, it holds that $\B\in \Omega_{-1} \oplus \Omega_0 \oplus \Omega_1$. Similarly to their argument, one can also prove that if $\B\in C^{\infty}(SM,\fu(n))$ and $\B\in \bigoplus_{k \leq 1} \Omega_k$, then $\B\in \Omega_{-1} \oplus \Omega_0 \oplus \Omega_1$. 
\begin{lemma}\label{lm:skew_hermitian}
    Let $M$ be a Riemannian surface with or without boundary. Let $\B\in C^{\infty}(SM,\mathfrak{gl}(n))$. If $\B$ is skew-Hermitian, i.e., $\B\in C^{\infty}(SM,\mathfrak{u}(n))$, and $\B\in \oplus_{k\ge -m}\Omega_k$, then $\B\in \oplus_{-m\le k\le m}\Omega_k $.
\end{lemma}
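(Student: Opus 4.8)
The plan is to exploit how the vertical Fourier decomposition interacts with the conjugate-transpose (adjoint) operation on matrix-valued functions, together with the skew-Hermitian hypothesis. First I would record the elementary fact that $V$ is a \emph{real} vector field acting entrywise on $\mathfrak{gl}(n)$-valued functions, so that for any $u\in C^\infty(SM,\mathfrak{gl}(n))$ one has $\overline{Vu}=V\bar u$ and $(Vu)^{*}=V(u^{*})$, where $(\cdot)^{*}$ denotes the conjugate transpose.

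Next, I would show that the adjoint operation maps $H_k$ to $H_{-k}$ at the level of matrices. Indeed, if $\B_k\in\Omega_k$, i.e. $-iV\B_k=k\B_k$, then applying $(\cdot)^{*}$ and using that $k$ is real together with the commutation of $(\cdot)^{*}$ with $V$ gives $-iV(\B_k)^{*}=-k(\B_k)^{*}$, so $(\B_k)^{*}\in\Omega_{-k}$. Since the Fourier decomposition $\B=\sum_k\B_k$ converges in $C^\infty(SM)$ and the adjoint is continuous and linear, this yields the identity $(\B^{*})_{-k}=(\B_k)^{*}$ for every $k\in\Z$; in particular the decomposition of $\B^{*}$ is obtained from that of $\B$ by conjugate-transposing each mode and reflecting the index.

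Then I would invoke the skew-Hermitian hypothesis $\B^{*}=-\B$, which upon comparing Fourier modes gives $(\B_k)^{*}=-\B_{-k}$ for all $k$. Now suppose $\B\in\bigoplus_{k\ge-m}\Omega_k$, i.e. $\B_k=0$ whenever $k<-m$. For any $j>m$ we have $-j<-m$, hence $\B_{-j}=0$, hence $(\B_j)^{*}=-\B_{-j}=0$, hence $\B_j=0$. Combining $\B_k=0$ for $k<-m$ with $\B_j=0$ for $j>m$ shows $\B\in\bigoplus_{-m\le k\le m}\Omega_k$, as claimed.

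I do not anticipate a genuine obstacle here: the only substantive point is the compatibility of the adjoint with the vertical Fourier projections (step two), which is a short computation once one notes that $V$ is real and acts entrywise; everything else is bookkeeping with Fourier indices. One should just take some care that the argument is purely algebraic at each fixed point of $SM$ and does not require any spectral theory beyond the orthogonal decomposition $L^2(SM,\C^n)=\bigoplus_k H_k$ already recalled in Section \ref{sec_preliminary}, applied entrywise to $\mathfrak{gl}(n)$-valued functions.
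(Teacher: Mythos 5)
Your proof is correct and follows essentially the same route as the paper's: both use that the conjugate transpose sends $\Omega_k$ to $\Omega_{-k}$ so that $\B^{*}\in\oplus_{k\le m}\Omega_k$, and then invoke $\B^{*}=-\B$ to cap the positive modes. Your write-up merely makes explicit the computation $-iV(\B_k)^{*}=-k(\B_k)^{*}$ and the mode-by-mode identity $(\B_k)^{*}=-\B_{-k}$, which the paper asserts without detail.
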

\begin{proof}
  In the Fourier modes, one can write $\B=\sum_{k\ge -m}\B_k$. Note that
    \begin{align*}
        \B^*=\left(\sum_{k\ge -m}\B_k\right)^*=\sum_{k\ge -m}\B_k^*
    \end{align*}
   with $\B_k^*\in \Omega_{-k}$. This implies $\B^*\in \oplus_{k\le m} \Omega_k$. From skew-Hermitian property, $\B^*=-\B=-\sum_{k\ge -m}\B_k$. This implies $\B\in \oplus_{-m\le k\le m}\Omega_k $. 
\end{proof}
\begin{remark}
    Similarly, if $\B$ is skew-Hermitian and $\B \in \oplus_{k\le m}\Omega_k$, then $\B \in \oplus_{-m\le k\le m}\Omega_k$.
\end{remark}

In \cite[Theorem 4.2]{Paternain:Salo:2020:a}, it is proved that, given any smooth map $R: S M \rightarrow G L(n, \mathbb{C})$, one can always find smooth maps $U:S M \rightarrow U(n)$ and $F: S M \rightarrow G L(n, \mathbb{C})$ where the map $F$ is fiberwise holomorphic with a fiberwise holomorphic inverse, and $R=FU$. Additionally, one can decompose $R=\widetilde F\widetilde U$, where $\widetilde U:SM \to U(n)$ is smooth and $\widetilde F: S M \rightarrow G L(n, \mathbb{C})$ is smooth, fiberwise antiholomorphic, and has a fiberwise antiholomorphic inverse. The following lemma generalizes \cite[Lemma 5.2]{Paternain:Salo:2020:a}. For the sake of completeness, we include the proof here.
\begin{lemma}\label{lm:loop:skew}
Let $(M, g, \lambda)$, $\lambda \in C^{\infty}(S M)$, be a compact Riemannian surface with strictly $\lambda$-convex boundary and nontrapping $\lambda$-geodesic flow. Let $\A\in C^{\infty}(SM,\mathfrak{gl}(\C))$. Let $R:SM \to GL(n,\C)$ be a smooth function solving $(X+\lambda V+\A)R=0$ with a loop group factorisation $R=FU$, where $F$ is fiberwise holomorphic with a fiberwise holomorphic inverse and $U$ is unitary. Then 
\begin{equation}
        \B=F^{-1}(X+\lambda V+\A) F
        \end{equation}
    is skew-Hermitian. In addition, if $\A \in \bigoplus_{k \geq-m} \Omega_k$ and $\lambda \in \bigoplus_{k \geq-m-1} \Omega_k$ for $m\ge 1$, then $\B\in \bigoplus_{-m\leq k \leq m} \Omega_k$.
\end{lemma}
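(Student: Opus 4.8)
The plan is to split the claim into two parts: first the skew-Hermitian property of $\B = F^{-1}(X+\lambda V + \A)F$, and then the Fourier-degree bound. For the first part, I would use the factorization $R = FU$ together with the equation $(X+\lambda V + \A)R = 0$. Writing $(X+\lambda V+\A)(FU) = 0$ and applying the Leibniz rule for the first-order operator $X + \lambda V$ on the product $FU$, one gets
\begin{equation}
[(X+\lambda V+\A)F]U + F(X+\lambda V)U = 0,
\end{equation}
so that $\B U = -(X+\lambda V)U$, i.e. $\B = -(X+\lambda V)(U)U^{-1}$. Since $U$ is unitary, $U^{-1} = U^*$ and $(X+\lambda V)(UU^*) = 0$ gives $(X+\lambda V)(U)U^* = -U(X+\lambda V)(U^*) = -((X+\lambda V)(U)U^*)^*$ (using that $X+\lambda V$ is a real first-order operator, so it commutes with taking conjugate-transpose in the fibrewise/matrix sense). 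Hence $\B = -(X+\lambda V)(U)U^*$ is skew-Hermitian. This is the soft part of the argument and essentially mirrors \cite[Lemma 5.2]{Paternain:Salo:2020:a}.

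For the Fourier-degree bound, the strategy is to show $\B \in \bigoplus_{k \geq -m}\Omega_k$ and then invoke Lemma \ref{lm:skew_hermitian} to upgrade this to $\B \in \bigoplus_{-m \leq k \leq m}\Omega_k$. To see $\B \in \bigoplus_{k\geq -m}\Omega_k$, I would analyze the three terms in $\B = F^{-1}(X+\lambda V)F + F^{-1}\A F$ separately, using that $F$ is fibrewise holomorphic with fibrewise holomorphic inverse, i.e. $F, F^{-1} \in \bigoplus_{k \geq 0}\Omega_k$. The term $F^{-1}\A F$ lies in $\bigoplus_{k \geq -m}\Omega_k$ since $\A \in \bigoplus_{k\geq -m}\Omega_k$ and multiplication adds Fourier degrees (so $\bigoplus_{k\geq 0} \cdot \bigoplus_{k\geq -m}\cdot \bigoplus_{k\geq 0} \subseteq \bigoplus_{k\geq -m}$). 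For $F^{-1}(XF)$, recall $X : \bigoplus_{k\geq 0}\Omega_k \to \bigoplus_{k\geq -1}\Omega_k$ (the mapping property quoted in the preliminaries), so $XF \in \bigoplus_{k\geq -1}\Omega_k$ and hence $F^{-1}(XF) \in \bigoplus_{k\geq -1}\Omega_k \subseteq \bigoplus_{k \geq -m}\Omega_k$ since $m \geq 1$. The genuinely delicate term is $F^{-1}(\lambda V F)$: here $\lambda \in \bigoplus_{k\geq -m-1}\Omega_k$ and $VF \in \bigoplus_{k\geq 0}\Omega_k$ (since $V : \Omega_k \to \Omega_k$), so naively $\lambda V F \in \bigoplus_{k\geq -m-1}\Omega_k$, which is one mode too low.

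\textbf{The main obstacle} is exactly this $\lambda V F$ term. The resolution should use that $V F = \sum_{j\geq 1} i j F_j$ picks up a factor of the degree, so the lowest mode $F_0$ is annihilated by $V$; thus $VF \in \bigoplus_{k\geq 1}\Omega_k$, and then $\lambda(VF) \in \bigoplus_{k \geq -m-1}\Omega_k \cdot \bigoplus_{k\geq 1}\Omega_k \subseteq \bigoplus_{k\geq -m}\Omega_k$, which together with $F^{-1} \in \bigoplus_{k\geq 0}\Omega_k$ gives $F^{-1}(\lambda V F) \in \bigoplus_{k\geq -m}\Omega_k$ as needed. (This is the precise point where the degree hypothesis on $\lambda$ is calibrated: $\lambda$ is allowed one mode lower than $\A$ precisely because $V$ kills the constant mode of $F$.) Combining the three estimates yields $\B \in \bigoplus_{k\geq -m}\Omega_k$, and since $\B$ is skew-Hermitian, Lemma \ref{lm:skew_hermitian} gives $\B \in \bigoplus_{-m\leq k\leq m}\Omega_k$, completing the proof. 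I would also double-check the edge behavior at $\Omega_0$ (where $\lambda_{\pm m}V|_{\Omega_0} = 0$, as noted in the preliminaries) to make sure no boundary term is lost, but I expect this to be routine.
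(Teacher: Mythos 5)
Your proposal is correct and follows essentially the same route as the paper: derive $(X+\lambda V+\B)U=0$, conclude $\B=-((X+\lambda V)U)U^{*}$ is skew-Hermitian from unitarity, bound $\B\in\bigoplus_{k\geq -m}\Omega_k$ using the holomorphicity of $F$ and $F^{-1}$, and finish with Lemma \ref{lm:skew_hermitian}. The only difference is that you explicitly verify the mapping property $X+\lambda V:\bigoplus_{k\geq 0}\Omega_k\to\bigoplus_{k\geq -m}\Omega_k$ (via $V$ annihilating the degree-zero mode of $F$), a detail the paper simply asserts; this is a welcome clarification, not a different argument.
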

\begin{proof}
Using our assumptions on $R$, we obtain
\begin{align}\label{eq:a:2}
    0=(X+\lambda V+\A) R= ((X+\lambda V)F)U+F(X+\lambda V)U+\A FU.
\end{align}
By multiplying $F^{-1}$ we have
\begin{align*}
    0 = F^{-1} \left( (X + \lambda V) F \right) U + (X + \lambda V) U + F^{-1} \mathcal{A} F U.
\end{align*}
This implies
\begin{align}\label{eq:a:1}
    (X+\lambda V+\B)U=0
\end{align}
where $\B=F^{-1}(X+\lambda V+\A) F$. By rearranging \eqref{eq:a:1}, we get 
\begin{equation}
    \B= -((X+\lambda V)U )U^{-1}.
\end{equation}
As $U$ is unitary, we have
\begin{align*}
    \left(((X+\lambda V)U )U^{-1}\right)^*&=(U^*)^*(X+\lambda V)U^{*}\\
    &=U(X+\lambda V)U^{-1}\\
    &=-((X+\lambda V)U )U^{-1}.
\end{align*}
This proves that $\B$ is skew-Hermitian. 

Now let us assume that $\A$ is in $\bigoplus_{k \geq-m} \Omega_k$ and $\lambda \in \bigoplus_{k \geq-m-1} \Omega_k$ for $m\ge 1$. Under this assumption, we have the following mapping property: \[X+\lambda V:\oplus_{k \geq 0} \Omega_k \rightarrow \oplus_{k \geq-m} \Omega_k. \]
Since $F$ and $F^{-1}$ are holomorphic, i.e., $F,F^{-1}\in  \oplus_{k \geq 0} \Omega_k$, we have $F^{-1}(X+\lambda V)F\in \oplus_{k \geq-m} \Omega_k$. Similarly, since  $\A\in \bigoplus_{k \geq-m} \Omega_k$, we have $F^{-1}\A F \in \bigoplus_{k \geq-m} \Omega_k$.  This implies $\B\in  \bigoplus_{k \geq-m} \Omega_k$. By Lemma \ref{lm:skew_hermitian}, we have $\mathcal{B} \in \bigoplus_{-m \leq k \leq m} \Omega_k$.
\end{proof}

In similar spirit, one can prove the following lemma using antiholmorphic and unitary loop group decomposition.
\begin{lemma}
   Let $(M, g, \lambda)$, $\lambda \in C^{\infty}(S M)$, be a compact Riemannian surface with strictly $\lambda$-convex boundary and nontrapping $\lambda$-geodesic flow. Let $\A\in C^{\infty}(SM,\mathfrak{gl}(\C))$. Let $R:SM \to GL(n,\C)$ be a smooth function solving $(X+\lambda V+\A)R=0$ with a loop group factorisation $R=\widetilde F\widetilde U$, where $\widetilde F$ is fiberwise antiholomorphic with a fiberwise antiholomorphic inverse and $\widetilde U$ is unitary. Then 
    \begin{equation}
        \B=\widetilde F^{-1}(X+\lambda V+\A) \widetilde F
    \end{equation}
    is skew-Hermitian. In addition, if $\A \in \bigoplus_{k \leq m} \Omega_k$ and $\lambda \in \bigoplus_{k \leq m+1} \Omega_k$ for $m\ge 1$, then  $\B\in \bigoplus_{-m\le k \leq m} \Omega_{k}$.
\end{lemma}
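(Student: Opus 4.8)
The plan is to mirror the proof of Lemma~\ref{lm:loop:skew} line by line, replacing ``holomorphic'' by ``antiholomorphic'' and reversing the relevant inequalities in the Fourier-degree bookkeeping. First I would record the analogue of \eqref{eq:a:2}: since $(X+\lambda V+\A)R=0$ and $R=\widetilde F\widetilde U$, the Leibniz rule for $X+\lambda V$ gives
\[
0=((X+\lambda V)\widetilde F)\widetilde U+\widetilde F(X+\lambda V)\widetilde U+\A\widetilde F\widetilde U,
\]
and multiplying on the left by $\widetilde F^{-1}$ yields $(X+\lambda V+\B)\widetilde U=0$ with $\B=\widetilde F^{-1}(X+\lambda V+\A)\widetilde F$, hence $\B=-((X+\lambda V)\widetilde U)\widetilde U^{-1}$. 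The skew-Hermitian claim then follows verbatim from the unitarity of $\widetilde U$: differentiating $\widetilde U\widetilde U^{*}=\operatorname{Id}$ and using $\widetilde U^{*}=\widetilde U^{-1}$ gives $((( X+\lambda V)\widetilde U)\widetilde U^{-1})^{*}=-((X+\lambda V)\widetilde U)\widetilde U^{-1}$. This part is entirely routine and carries over with no change.

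The substantive point is the Fourier-degree estimate, and here the roles of $X$ and $\lambda V$ are the mirror images of the holomorphic case. Recall $X:\oplus_{k\le 0}\Omega_k\to\oplus_{k\le 1}\Omega_k$ (the reflection of $X:\oplus_{k\ge0}\Omega_k\to\oplus_{k\ge-1}\Omega_k$), and if $\lambda\in\oplus_{k\le m+1}\Omega_k$ then $\lambda V:\oplus_{k\le 0}\Omega_k\to\oplus_{k\le m+1}\Omega_k$; combining these, $X+\lambda V:\oplus_{k\le0}\Omega_k\to\oplus_{k\le m+1}\Omega_k$. Wait — I should be slightly careful: with $m\ge1$ we have $m+1\ge 2>1$, so the dominant term is $\lambda V$ and indeed $X+\lambda V:\oplus_{k\le0}\Omega_k\to\oplus_{k\le m+1}\Omega_k$. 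But the target we want is $\oplus_{k\le m}\Omega_k$, not $\oplus_{k\le m+1}\Omega_k$; this is exactly the gap that forces the hypothesis to be stated more carefully. Re-examining the holomorphic lemma, the same apparent loss is absorbed because $\lambda V$ annihilates $\Omega_0$ and sends $\Omega_k$ to $\Omega_{k\pm m\mp?}$ — more precisely, with $\lambda\in\oplus_{k\ge -m-1}\Omega_k$ one uses that the lowest Fourier mode of $\lambda$ contributing is $\lambda_{-m-1}$, and $\lambda_{-m-1}V:\Omega_k\to\Omega_{k-m-1}$ only for $k\neq0$, while holomorphic $F$ has $F_0$ as its lowest mode, on which $\lambda_{-m-1}V$ vanishes, so effectively $X+\lambda V$ maps $\oplus_{k\ge0}\Omega_k$ into $\oplus_{k\ge-m}\Omega_k$. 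I would reproduce that argument with signs reversed: the highest mode of $\widetilde F$ is $\widetilde F_0$, the operator $\lambda_{m+1}V$ kills $\Omega_0$, and $X$ raises degree by at most $1$; a short case analysis on the top mode shows $X+\lambda V:\oplus_{k\le0}\Omega_k\to\oplus_{k\le m}\Omega_k$, hence $\widetilde F^{-1}(X+\lambda V)\widetilde F\in\oplus_{k\le m}\Omega_k$.

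Once that mapping property is in place, the rest is a two-line sandwich: since $\widetilde F,\widetilde F^{-1}\in\oplus_{k\le0}\Omega_k$ (antiholomorphic) and $\A\in\oplus_{k\le m}\Omega_k$, we get $\widetilde F^{-1}\A\widetilde F\in\oplus_{k\le m}\Omega_k$, so $\B\in\oplus_{k\le m}\Omega_k$. Combining this with the skew-Hermitian property and the Remark following Lemma~\ref{lm:skew_hermitian} (the mirror of Lemma~\ref{lm:skew_hermitian}: a skew-Hermitian $\B\in\oplus_{k\le m}\Omega_k$ lies in $\oplus_{-m\le k\le m}\Omega_k$) gives $\B\in\oplus_{-m\le k\le m}\Omega_k$, as claimed.

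I expect the only real obstacle to be the careful Fourier-degree accounting in the middle paragraph — specifically making precise why the $\lambda V$ term, which naively would push the degree up to $m+1$, does not actually do so when applied to the output of an antiholomorphic $\widetilde F$, because the relevant extremal Fourier mode is annihilated by $\lambda_{\pm m\pm1}V$. Everything else is a verbatim transcription of the proof of Lemma~\ref{lm:loop:skew} under the substitution (holomorphic $\leftrightarrow$ antiholomorphic, $k\ge\,\leftrightarrow\,k\le$).
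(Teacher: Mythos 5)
Your proposal is correct and follows exactly the route the paper intends: the paper omits the proof of this lemma, stating only that it is the mirror of Lemma \ref{lm:loop:skew} under the antiholomorphic/unitary factorization, and your transcription — including the key observation that $V$ annihilates $\Omega_0$, so the $\lambda_{m+1}$ mode applied to the antiholomorphic $\widetilde F$ lands in degree at most $m$ rather than $m+1$ — is precisely the intended argument. The skew-Hermitian part and the final appeal to the remark after Lemma \ref{lm:skew_hermitian} are also as in the paper.
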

The following theorem generalizes \cite[Theorem 5.5]{Paternain:Salo:2020:a} to the case of $\lambda$-geodesic flow for $\lambda \in \bigoplus_{-2 \leq k \leq 2} \Omega_k$. Recall that in the case of $\lambda \in \bigoplus_{-2 \leq k \leq 2} \Omega_k$, the vector field $ X + \lambda V$ has the following mapping property:
\[
X + \lambda V : \oplus_{k\geq 0}\Omega_{k}\to \oplus_{k\geq -1}\Omega_{k}.
\]
\begin{theorem}\label{holo} 
Let $(M,g,\lambda)$, $\lambda \in C^\infty(SM)$, be a compact Riemanian surface with strictly $\lambda$-convex boundary and nontrapping $\lambda$-geodesic flow. Let $\lambda \in \bigoplus_{-2 \leq k \leq 2} \Omega_k$ and $\A\in C^{\infty}(SM,\mathfrak{gl}(n,\C))$ with $\A \in \Omega_{-1} \oplus \Omega_0 \oplus \Omega_1$. Additionally, assume that Conjecture \ref{conjecture 3} is true.
Let $u\in C^{\infty}(SM,\mathbb{C}^{n})$ satisfy $u|_{\partial SM}=0$ and
\[(X+\lambda V)u+\A u=-f\in \oplus_{k\geq -1}\Omega_{k}.\]
Then $u$ is holomorphic.
\label{thm:holomorphic}
\end{theorem}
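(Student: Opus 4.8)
The plan is to reduce to the unitary (skew-Hermitian) case by a loop group factorization of an integrating factor, and then to feed the strictly antiholomorphic part of the transformed solution into the assumed Conjecture \ref{conjecture 3}.

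First I would fix a matrix integrating factor for $\A$: by Lemma \ref{lm:R:equation}, applied on a smooth strictly convex nontrapping extension of $(M,g,\lambda)$, there is a smooth $R:SM\to GL(n,\C)$ with $(X+\lambda V)R+\A R=0$, and by the loop group factorization \cite[Theorem 4.2]{Paternain:Salo:2020:a} I may write $R=FU$ with $F:SM\to GL(n,\C)$ fiberwise holomorphic with fiberwise holomorphic inverse and $U:SM\to U(n)$. Since $\A\in\Omega_{-1}\oplus\Omega_0\oplus\Omega_1\subset\bigoplus_{k\ge -1}\Omega_k$ and $\lambda\in\bigoplus_{-2\le k\le 2}\Omega_k\subset\bigoplus_{k\ge -2}\Omega_k$, Lemma \ref{lm:loop:skew} with $m=1$ gives that $\B:=F^{-1}(X+\lambda V+\A)F$ is skew-Hermitian and lies in $\Omega_{-1}\oplus\Omega_0\oplus\Omega_1$; this is the first place the hypothesis on $\lambda$ enters. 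Setting $w:=F^{-1}u$ and using the conjugation identity $(X+\lambda V+\A)(Fw)=F(X+\lambda V+\B)w$ from the proof of Lemma \ref{lm:loop:skew}, I get that $w$ is smooth, vanishes on $\partial SM$, and solves $(X+\lambda V+\B)w=-F^{-1}f$; as $F^{-1}$ is fiberwise holomorphic and $f\in\bigoplus_{k\ge -1}\Omega_k$, the new source $\tilde f:=F^{-1}f$ again lies in $\bigoplus_{k\ge -1}\Omega_k$. Because $F$ is fiberwise holomorphic, $u=Fw$ will be holomorphic as soon as $w$ is, so everything reduces to proving $w$ holomorphic.

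Next I would split $w=w^++w^-$ into its holomorphic part $w^+\in\bigoplus_{k\ge 0}\Omega_k$ and its strictly antiholomorphic part $w^-\in\bigoplus_{k\le -1}\Omega_k$; both are smooth, and since $w$ vanishes on all of $\partial SM$, so does each fiberwise Fourier coefficient $w_k$, hence $w^-|_{\partial SM}=0$. The crucial bookkeeping is: the mapping property $X+\lambda V:\bigoplus_{k\ge 0}\Omega_k\to\bigoplus_{k\ge -1}\Omega_k$ (recalled before the theorem, valid because the Fourier degree of $\lambda$ is at most $2$) together with $\B\in\Omega_{-1}\oplus\Omega_0\oplus\Omega_1$ gives $(X+\lambda V+\B)w^+\in\bigoplus_{k\ge -1}\Omega_k$; on the other hand, $w^-\in\bigoplus_{k\le -1}\Omega_k$ together with the fact that $\lambda V$ raises Fourier degree by at most $2$ and $X,\B$ by at most $1$ gives $(X+\lambda V+\B)w^-\in\bigoplus_{k\le 1}\Omega_k$. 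Subtracting $(X+\lambda V+\B)w^+$ from the equation for $w$, I conclude that $g:=(X+\lambda V+\B)w^-$ lies in $\left(\bigoplus_{k\ge -1}\Omega_k\right)\cap\left(\bigoplus_{k\le 1}\Omega_k\right)=\Omega_{-1}\oplus\Omega_0\oplus\Omega_1$, so $-g=g_0+\beta$ with $g_0\in C^\infty(M,\C^n)$ and $\beta$ a $\C^n$-valued $1$-form.

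Finally I would decompose $\B=\Phi_\B+A_\B$ with $\Phi_\B:=\B_0$ a skew-Hermitian Higgs field and $A_\B:=\B_{-1}+\B_1$ a unitary connection. Then $(X+\lambda V+\B)w^-=-(g_0+\beta)$ with $w^-|_{\partial_-SM}=0$ says precisely that $I^\lambda_{A_\B,\Phi_\B}(g_0+\beta)=w^-|_{\partial_+SM}=0$, so the assumed Conjecture \ref{conjecture 3} produces $p\in C^\infty(M,\C^n)$ with $p|_{\partial M}=0$, $g_0=\Phi_\B p$ and $\beta=dp+A_\B p$. A direct computation (using $Vp=0$ and identifying $Xp$ with $dp$) shows $(X+\lambda V+\B)(-p)=-(dp+A_\B p+\Phi_\B p)=-(\beta+g_0)=g$ and $-p$ vanishes on $\partial SM$; by uniqueness of solutions of the transport ODE along the $\lambda$-geodesics, $w^-=-p$, and since $w^-\in\bigoplus_{k\le -1}\Omega_k$ while $-p\in\Omega_0$ this forces $w^-=0$. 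Hence $w=w^+$ is holomorphic and $u=Fw$ is holomorphic. The only genuinely delicate step is the Fourier-degree bookkeeping in the third paragraph: it is exactly the demand that $(X+\lambda V+\B)w^-$ land in the window $\Omega_{-1}\oplus\Omega_0\oplus\Omega_1$ where Conjecture \ref{conjecture 3} applies that pins down the condition that $\lambda$ have Fourier degree at most $2$, and (cf. Remark \ref{remark:obstruction}) this is what breaks down once $\lambda$ has higher modes.
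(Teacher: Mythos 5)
Your proof is correct and follows essentially the same route as the paper: loop group factorization $R=FU$, conjugation to a skew-Hermitian $\B\in\Omega_{-1}\oplus\Omega_0\oplus\Omega_1$ via Lemma \ref{lm:loop:skew}, extraction of the strictly antiholomorphic part of $F^{-1}u$, application of Conjecture \ref{conjecture 3}, and uniqueness for the transport ODE along the nontrapping flow. If anything, your Fourier bookkeeping is slightly sharper: the paper asserts that $(X+\lambda V+\B)q$ lands in $\Omega_{-1}\oplus\Omega_{0}$, whereas the correct (and still sufficient, since Conjecture \ref{conjecture 3} allows a function plus a $1$-form) window is $\Omega_{-1}\oplus\Omega_0\oplus\Omega_1$, exactly as you compute.
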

\begin{proof} 
From Lemma \ref{lm:R:equation}, there exists a smooth function $R$ such that $(X+\lambda V)R + \mathcal{A}R = 0$ and $(X+\lambda V)R^{-1} - R^{-1}\mathcal{A} = 0$. As in Lemma \ref{lm:loop:skew}, $R$ can be written as $R = FU$, where $F$ is fiberwise holomorphic with a fiberwise holomorphic inverse, and $U$ is unitary.
Now using $\B$ as in Lemma \ref{lm:loop:skew}, we have
\begin{align*}
    (X+\lambda V)&(F^{-1}u)+\B(F^{-1}u)\\
    &= -F^{-1}((X+\lambda V)F)F^{-1}u+F^{-1}(X+\lambda V)u+ F^{-1}((X+\lambda V+\mathcal{A}) F) F^{-1}u\\
    &=F^{-1}(X+\lambda V+\A)u=-F^{-1}f.
\end{align*}
and $F^{-1}u|_{\partial SM}=0$. Since $f \in \oplus_{k \geq-1} \Omega_k$ and $F^{-1}$ is holomorphic, it follows that $F^{-1}f \in \oplus_{k \geq-1} \Omega_k$.
Similar to the proof of \cite[Theorem 5.5]{Paternain:Salo:2020:a}, we consider $q:=\sum_{-\infty}^{-1}(F^{-1}u)_k$. From 
\begin{align*}
    (X+\lambda V)(F^{-1}u)+\B(F^{-1}u)=-F^{-1}f\in \oplus_{k \geq-1} \Omega_k,
\end{align*}
we obtain
\begin{align*}
    (X+\lambda V)q+\B q\in\Omega_{-1}\oplus\Omega_{0}.
\end{align*}
Since $q|_{\partial SM}=0$, $\B$ is skew-Hermitian and $\B \in \Omega_{-1} \oplus \Omega_0 \oplus \Omega_1$, it follows from Conjecture \ref{conjecture 3} that $$ (X+\lambda V + \B) p = (X+\B) p = (X+\lambda V +\B)q$$ for some $p \in C^\infty(M)$ with $p|_{\partial M}=0$. Therefore $r := p-q$ solves the equation
$(X+\lambda V + \B) r=0$
with the boundary condition $r|_{\partial SM} = 0$. Since the solution to this problem is unique due to the uniqueness of solutions for a linear system of first order ODEs (using the fact that $\lambda$-geodesic flow is nontrapping), we find that $r=0$. Therefore $q=p \in \Omega_0$, which gives that $q=0$.

This implies that $F^{-1}u$ is holomorphic and hence $u=F(F^{-1}u)$ is also holomorphic.
\end{proof}

\begin{remark}\label{remark:obstruction} If $\lambda$ has Fourier modes of three or higher, then $\B$ given by Lemma \ref{lm:loop:skew} possibly has Fourier modes of two or higher, and one may not conclude the desired uniqueness result with this method. One would instead reduce to a transport inverse problem with an attenuation having higher Fourier modes.
\end{remark}

\begin{proof}[Proof of Theorem \ref{conj2_conj1}]
From Proposition \ref{proposition:gaugeU}, we infer that $C_{A,\Phi}^{\lambda} = C_{B,\Psi}^{\lambda}$ implies the existence of a smooth function $U: SM \to GL(n, \mathbb{C})$ such that $U|_{\partial SM} = \id$ and
\begin{equation} \label{eq:geq}
\B = U^{-1}(X + \lambda V)U + U^{-1}\A U,
\end{equation}
where $\B(x,v) = B_{x}(v) + \Psi(x)$ and $\A(x,v) = A_{x}(v) + \Phi(x)$. We can rephrase this information in terms of an attenuated $\lambda$-ray transform. If we define $W = U - \id$, then $W|_{\partial SM} = 0$ and \[(X+\lambda V)W+\A W-W\B=-(\A-\B).\]
That is, $W$ satisfies \eqref{eq:W:attenuated:transport}. Consequently, $W$ is associated with the attenuated $\lambda$-ray transform $I_{E(\A,\B)}^{\lambda}(\A - \B)$, and if $C_{\A}^{\lambda} = C_{\B}^{\lambda}$, then this transform vanishes by Lemma \ref{lm:psedo:ide}. Note that $\A - \B \in \Omega_{-1} \oplus \Omega_{0} \oplus \Omega_{1}$.

We introduce a new connection $\hat A$ on the trivial bundle $M \times \mathbb{C}^{n \times n}$ and a new Higgs field $\hat \Phi$ as follows: for any matrix $H \in \mathbb{C}^{n \times n}$, we define $\hat A(H) := AH - HB$ and $\hat \Phi(H) := \Phi H - H\Psi$. Observe that $E(\A,\B) = \hat A + \hat \Phi$, which is an arbitrary attenuation pair as assumed in Conjecture \ref{conjecture 2}. Applying Conjecture \ref{conjecture 2}, we deduce the existence of a function $\widetilde W \in C^{\infty}(M)$ satisfying $\widetilde W|_{\partial M} = 0$ and 
    \begin{align*}
        (X + \lambda V + \hat A + \hat \Phi)\widetilde W &= (X + \hat A + \hat \Phi)\widetilde W = (X + \lambda V + \hat A + \hat \Phi) W.
    \end{align*}
    Consequently, the function $r := \widetilde W - W$ solves the equation $(X + \lambda V + \hat A + \hat \Phi)r = 0$ with $r|_{\partial SM} = 0$. Following a similar argument as used in the proof of Theorem \ref{thm:holomorphic}, we conclude that $r = 0$. Therefore, $\widetilde W=W\in \Omega_0$, indicating that $W$ smooth function on $M$. Thus, $U=W+\id$, which depends only on $x$, and setting $u(x) = U_{0}(x) = U(x)$, equation \eqref{eq:geq} can be reformulated as $B = u^{-1}du + u^{-1}Au$ and $\Psi = u^{-1}\Phi u$, by examining solely the components of degree $0$ and $\pm 1$. This proves the existence of $u \in \mathcal{G}$ such that $(A, \Phi)\cdot{u}=(B, \Psi)$.
\end{proof}

\begin{proof}[Proof of Theorem \ref{them_conjecture 2}]
Consider an arbitrary attenuation pair $(A, \Phi)$, where
\begin{equation}
A \in \Omega^1(M, \mathfrak{g l}(n, \mathbb{C})), \qquad \Phi \in C^{\infty}(M, \mathfrak{g l}(n, \mathbb{C})).
\end{equation}
Define $\A(x, v) = A(x, v) + \Phi(x)$. If $I_{\A}^{\lambda}f = 0$, then by Lemma \ref{lm:reg:u_A_Lam}, we have $u \in C^{\infty}(SM)$, $u|_{\partial SM} = 0$, and
\begin{equation}\label{eq:q1}
(X + \lambda V)u + \A u = -f \in \Omega_{-1} \oplus \Omega_0 \oplus \Omega_1.
\end{equation}
Given that $\A \in \oplus_{-1 \leq k \leq 1} \Omega_k$ and $\lambda \in \bigoplus_{-2 \leq k \leq 2} \Omega_k$, Theorem \ref{holo} implies $u$ is holomorphic. The complex conjugate takes $w_k\in \Omega_k$ to $\overline{w_k}\in \Omega_{-k}$. Taking the complex conjugate of equation \eqref{eq:q1}, and considering that $\overline{\A}, \overline{f} \in \oplus_{-1 \leq k \leq 1} \Omega_k$ and $\overline{\lambda} \in \bigoplus_{-2 \leq k \leq 2} \Omega_k$, we deduce that $\overline{u}$ is also holomorphic (using Theorem \ref{holo}). This leads to the conclusion that $u = u_0$ is solely dependent on $x$. Setting $p = -u$, we then obtain $f = (X + \A)p$.
\end{proof}
From \cite[Theorem 1.2]{Ainsworth:2013} and Theorem \ref{them_conjecture 2}, we can easily deduce Theorem \ref{them_conjecture 3}.

\appendix
\section{Some basic properties of the nontrapping \texorpdfstring{$\lambda$}{lambda}-geodesic flows}
\label{sec:appendix-nontrappingness}

Our main goal in this appendix is to show that the existence of nontrapping $\lambda$-geodesic flow is an open condition. For that, we construct a smooth function $f$ on $SM$ satisfying the property that $(X+\lambda V)f > 0$. For the case of $\lambda=0$, this is shown in \cite[Proposition 3.3.1]{GIP2D}. The proof for the general case follows similarly. For the sake of completeness, we present it in this appendix. Before that, we restate one technical lemma from \cite{GIP2D}.

\begin{lemma}[{\cite[Lemma 3.2.10]{GIP2D}}]\label{lm:3.2.10.gip}
   Let $h(t, y)$ be a smooth function near $\left(0, y_0\right)$ in $\mathbb{R} \times \mathbb{R}^N$. If
$$
h\left(0, y_0\right)=0, \quad \partial_t h\left(0, y_0\right)=0, \quad \partial_t^2 h\left(0, y_0\right)<0,
$$
then one has
$$
h(t, y)=0 \text { near }\left(0, y_0\right) \text { when } h(0, y) \geq 0 \quad \Longleftrightarrow \quad t=Q\left( \pm \sqrt{a(y)}, y\right)
$$
where $Q$ is a smooth function near $\left(0, y_0\right)$ in $\mathbb{R} \times \mathbb{R}^N$, a is a smooth function near $y_0$ in $\mathbb{R}^N$, and $a(y) \geq 0$ when $h(0, y) \geq 0$. Moreover, $Q\left(\sqrt{a(y)}, y\right) \geq Q\left(-\sqrt{a(y)}, y\right)$ when $h(0, y) \geq 0$.
\end{lemma}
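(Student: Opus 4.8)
The plan is to read the statement as a parametrized form of the elementary fact that a downward‑opening quadratic in $t$ vanishes exactly at the two points symmetric about its vertex; here the ``quadratic'' is the exact Taylor remainder of $h$ in the variable $t$, and the two roots are produced by the implicit function theorem.

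First I would locate the vertex. Since $h$ is smooth and $\partial_t^2 h(0,y_0)<0$, after shrinking the neighbourhood I may assume $\partial_t^2 h<0$ throughout; then for each fixed $y$ the map $t\mapsto\partial_t h(t,y)$ is strictly decreasing and so has at most one zero. As $\partial_t h(0,y_0)=0$ and $\partial_{tt}h(0,y_0)\neq0$, the implicit function theorem produces a smooth $t_0(y)$ with $t_0(y_0)=0$ and $\partial_t h(t_0(y),y)=0$, and this $t_0(y)$ is the location of the (nondegenerate, hence unique) local maximum of $t\mapsto h(t,y)$. I then set $a(y):=h(t_0(y),y)$, which is smooth, equals $\max_t h(t,y)$ on the neighbourhood, and hence satisfies $a(y)\ge h(0,y)$, so $a(y)\ge0$ whenever $h(0,y)\ge0$. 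Next I would expand in $t$ about $t_0(y)$ with Taylor's integral remainder, using that the first $t$-derivative vanishes there, to get $h(t_0(y)+s,y)=a(y)+s^2 r(s,y)$ with $r(s,y)=\int_0^1(1-\sigma)\,\partial_t^2 h(t_0(y)+\sigma s,y)\,d\sigma$ smooth and $r(0,y_0)=\tfrac12\partial_t^2 h(0,y_0)<0$; shrinking once more, $-r>0$ on the neighbourhood.

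The final step is the change of variable $w:=s\sqrt{-r(s,y)}$. Since $-r$ is a positive smooth function, $w$ is jointly smooth in $(s,y)$ and $\partial_s w|_{s=0}=\sqrt{-r(0,y_0)}>0$, so $(s,y)\mapsto(w,y)$ is a local diffeomorphism; writing its inverse as $s=\psi(w,y)$, the function $\psi$ is smooth, strictly increasing in $w$, and of the same sign as $w$, while the equation $h=0$ becomes $w^2=a(y)$, i.e. $w=\pm\sqrt{a(y)}$ (which forces $a(y)\ge0$). Setting $Q(u,y):=t_0(y)+\psi(u,y)$ then gives, on the region where $h(0,y)\ge0$, the characterization $h(t,y)=0\iff t=Q(\pm\sqrt{a(y)},y)$, and monotonicity of $\psi$ in its first argument yields $Q(\sqrt{a(y)},y)\ge Q(-\sqrt{a(y)},y)$. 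The point I would be most careful with is precisely this change of variable: verifying that $w=s\sqrt{-r(s,y)}$ is a genuine parametrized diffeomorphism (joint smoothness of $\psi$ and uniformity of the inverse function theorem near $(0,y_0)$), and that the successive shrinkings of the neighbourhood --- for $\partial_{tt}h<0$, for the existence of $t_0$, and for $-r>0$ --- are made consistently. Everything else reduces to bookkeeping with Taylor's formula and the algebra of $w^2=a(y)$, from which the sign and monotonicity assertions drop out.
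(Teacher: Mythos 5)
Your proof is correct. Note that the paper itself gives no proof of this statement --- it is quoted verbatim as \cite[Lemma 3.2.10]{GIP2D} --- and your argument (locate the nondegenerate critical point $t_0(y)$ via the implicit function theorem, Taylor-expand with integral remainder to write $h(t_0(y)+s,y)=a(y)+s^2r(s,y)$ with $r<0$, then straighten by the parametrized change of variable $w=s\sqrt{-r(s,y)}$) is precisely the standard Morse-lemma-with-parameters proof given in that reference, with all the needed checks (joint smoothness of the inverse $\psi$, sign and monotonicity of $\psi$ in $w$, and $a(y)=\max_t h(t,y)\ge h(0,y)$) carried out correctly.
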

The following lemma generalizes \cite[Lemma 3.2.9]{GIP2D} in the context of the forward and backward travel times of $\lambda$-geodesic flow.
\begin{lemma}\label{lm:taupm}
    Let $(M,g,\lambda)$, $\lambda\in C^{\infty}(SM)$, be a compact Riemmanian surface. Let $(x_0,v_0)\in \partial_0SM$ and suppose that $\partial M$ is strictly $\lambda$-convex near $x_0$. Assume that $M$ is isometrically embedded in a closed manifold $N$ and $\lambda_0$ is smooth extension of $\lambda$ to $N$. Then, near $(x_0,v_0)$ in $SM$, one has\begin{align}\label{eq:taupm}
        \begin{split}
            \tau^+(x, v) &= Q\left(\sqrt{a(x, v)}, x, v\right), \\
        -\tau^-(x,v) &= Q\left(-\sqrt{a(x, v)}, x, v\right),
        \end{split}
    \end{align} 
    where $Q$ is a smooth function near $(0, x_0,v_0)$ in $ \mathbb R\times SN$, and $a$ is a smooth function near $(x_0,v_0)$ in $SN$ with $a\ge 0$ in $SM$.
\end{lemma}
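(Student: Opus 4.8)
The plan is to reduce the statement to an application of Lemma \ref{lm:3.2.10.gip} by finding the correct function $h(t,y)$ whose zero set (restricted to the region where $h(0,y) \geq 0$) describes the exit times. The natural choice is a boundary defining function: let $\rho \in C^\infty(N)$ be a smooth function with $\rho > 0$ in the interior of $M$, $\rho = 0$ on $\partial M$, $\rho < 0$ outside $M$, and $d\rho \neq 0$ on $\partial M$ (so $\nu = \nabla \rho / |\nabla \rho|$ up to normalization is the inward normal). Define
\[
h(t, x, v) := \rho\bigl(\pi(\phi_t(x,v))\bigr),
\]
where $\phi_t$ is the $\lambda_0$-geodesic flow on $SN$ and $\pi : SN \to N$ is the base projection; equivalently $h(t,x,v) = \rho(\gamma_{x,v}(t))$. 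This is smooth near $(0, x_0, v_0)$ since the $\lambda_0$-geodesic flow is smooth on the closed manifold $N$. The exit times $\tau^\pm(x,v)$ are then characterized, for $(x,v)$ near $(x_0,v_0)$ in $SM$, as the two solutions near $t = 0$ of $h(t,x,v) = 0$, because a point of $SM$ lies on $\partial SM$ exactly when its base point satisfies $\rho = 0$.

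The key computation is to verify the three hypotheses of Lemma \ref{lm:3.2.10.gip} at $(0, (x_0,v_0))$, taking $y = (x,v)$ ranging in $SN$. First, $h(0, x_0, v_0) = \rho(x_0) = 0$ since $x_0 \in \partial M$. Second, $\partial_t h(0, x_0, v_0) = d\rho_{x_0}(v_0) = 0$: this is exactly the condition $(x_0, v_0) \in \partial_0 SM$, i.e. $v_0$ is tangent to $\partial M$ at $x_0$. Third, and this is the crucial point, $\partial_t^2 h(0, x_0, v_0) < 0$ must follow from strict $\lambda$-convexity of $\partial M$ near $x_0$. Computing the second derivative along the flow, $\partial_t^2 h = \frac{d^2}{dt^2} \rho(\gamma_{x,v}(t)) = \mathrm{Hess}\,\rho(\dot\gamma, \dot\gamma) + d\rho(D_t \dot\gamma)$, and using the $\lambda$-geodesic equation $D_t\dot\gamma = \lambda(\gamma,\dot\gamma)\, i\dot\gamma$, at $t = 0$ with $(x_0,v_0) \in \partial_0 SM$ this becomes $\mathrm{Hess}\,\rho_{x_0}(v_0,v_0) + \lambda(x_0,v_0)\, d\rho_{x_0}(i v_0)$. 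Relating $\mathrm{Hess}\,\rho$ restricted to $T\partial M$ to the second fundamental form $\sff$ (up to the positive factor $|\nabla\rho|$, and with the appropriate sign convention so that $\sff_{x_0}(v_0,v_0) = -\mathrm{Hess}\,\rho_{x_0}(v_0,v_0)/|\nabla\rho|$ or similar), and noting $d\rho_{x_0}(iv_0) = |\nabla\rho|\,\langle iv_0, \nu(x_0)\rangle$, the strict $\lambda$-convexity inequality $\sff_{x_0}(v_0,v_0) > -\langle \lambda(x_0,v_0) i v_0, \nu(x_0)\rangle$ translates precisely into $\partial_t^2 h(0,x_0,v_0) < 0$. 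I would carry out this sign-bookkeeping carefully, as it is the heart of the argument.

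With the three hypotheses verified, Lemma \ref{lm:3.2.10.gip} yields a smooth $Q$ near $(0,x_0,v_0)$ in $\mathbb{R} \times SN$ and a smooth $a \geq 0$ (where $h(0,\cdot) = \rho \geq 0$, i.e. in $SM$) such that the two solutions of $h(t,x,v) = 0$ near $t=0$ are $t = Q(\pm\sqrt{a(x,v)}, x, v)$, with $Q(\sqrt{a},\cdot) \geq Q(-\sqrt{a},\cdot)$. The final step is to match these two branches with $\tau^+$ and $-\tau^-$: since $\tau^+ \geq 0 \geq -\tau^-$ for $(x,v) \in SM$ (forward exit time is nonnegative, backward exit time gives a nonpositive value of $t$), the larger branch $Q(\sqrt{a},\cdot)$ must be $\tau^+$ and the smaller branch $Q(-\sqrt{a},\cdot)$ must be $-\tau^-$, giving exactly \eqref{eq:taupm}. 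One should also remark that for $(x,v)$ near $(x_0,v_0)$ in $SM$ the flow does not leave the coordinate neighbourhood before these first exit times, so that $h$ indeed captures $\tau^\pm$ and not some later return; this is a standard consequence of working in a small enough neighbourhood of the glancing point. The main obstacle is thus the second-derivative computation and its sign, i.e. correctly identifying $\partial_t^2 h(0,x_0,v_0) < 0$ with the strict $\lambda$-convexity condition; everything else is bookkeeping around Lemma \ref{lm:3.2.10.gip}.
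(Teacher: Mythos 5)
Your proposal is correct and follows essentially the same route as the paper: define $h(t,x,v)=\rho(\gamma_{x,v}(t))$ for a boundary defining function $\rho$, check the three hypotheses of Lemma \ref{lm:3.2.10.gip} at a glancing point (with strict $\lambda$-convexity giving $\partial_t^2 h<0$), and match the two branches $Q(\pm\sqrt{a},\cdot)$ with $\tau^+$ and $-\tau^-$ via $\tau^+\ge 0\ge-\tau^-$. The only difference is that you sketch the second-derivative/convexity computation directly, whereas the paper outsources it to cited lemmas; your sketch of that computation is consistent with what those references establish.
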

\begin{proof}
    Following the approach of \cite[Lemma 3.2.9]{GIP2D}, we define $h:\mathbb R\times SN\to \mathbb R$, $h(t, x, v):=\rho\left(\gamma_{x, v}(t)\right)$, where $\rho$ is a boundary defining function satisfying, $\rho|_{\operatorname{int} M}> 0$, $\rho|_{\partial M}=0$ and $\rho|_{N\setminus M}<0$. According to \cite[Lemma A.12]{Jathar:Kar:Railo:2023:arxiv} and \cite[Lemma 3.15]{Jathar:Kar:Railo:2023:arxiv}, for any $(x_0,v_0)\in \partial_0SM$, we have the following:
    \begin{align*}
       h\left(\pm \tau^{ \pm}\left(x_0, v_0\right), x_0, v_0\right)&=  h\left(0, x_0, v_0\right)=0,\\ \partial_t h\left(\pm \tau^{ \pm}\left(x_0, v_0\right), x_0, v_0\right)&=\partial_t h\left(0, x_0, v_0\right)=0, \\ \partial_t^2 h\left(\pm \tau^{ \pm}\left(x_0, v_0\right), x_0, v_0\right)&=\partial_t^2 h\left(0, x_0, v_0\right)<0.
    \end{align*}
           Also, $h(\pm \tau^{\pm}(x,v), x,v)=0$ for any $(x,v)\in SM$ near $(x_0,v_0)$. Note that $\tau^+(x,v)\ge 0\ge -\tau^{-}(x,v)$. Applying Lemma \ref{lm:3.2.10.gip}, we express $\tau^{+}(x, v)$ and $-\tau^{-}(x, v)$ as in equation \eqref{eq:taupm}, confirming that $Q$ and $a$ have the stated properties.  
\end{proof}

\begin{lemma}\label{lm:smooth:ext:tau}
 Let $(M, g, \lambda), \lambda \in C^{\infty}(S M)$, be a compact Riemanian surface with strictly $\lambda$-convex boundary and nontrapping $\lambda$-geodesic flow. Define $\tilde{\tau}: SM\to \mathbb R$ by 
\begin{equation}
    \tilde{\tau}(x, v):=\tau^+(x, v)-\tau^-(x,v)
\end{equation}
Then $\tilde{\tau} \in C^{\infty}( S M)$.
\end{lemma}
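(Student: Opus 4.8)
The plan is to show that $\tilde{\tau}$ is smooth by establishing smoothness in a neighbourhood of each point $(x_{0},v_{0})\in SM$, distinguishing the glancing vectors, where $\tau^{+}$ and $\tau^{-}$ individually fail to be smooth, from the non-glancing ones, where they do not. Away from $\partial_{0}SM$ the two exit times $\tau^{+}$ and $\tau^{-}$ are each smooth, so $\tilde{\tau}=\tau^{+}-\tau^{-}$ is smooth there; the actual content of the lemma is that at a glancing vector $\tau^{+}$ and $\tau^{-}$ are only $\tfrac{1}{2}$-H\"older (because of the square root in Lemma \ref{lm:taupm}), while in the combination $\tau^{+}-\tau^{-}$ the square-root singularities cancel.

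For a non-glancing point $(x_{0},v_{0})\in SM\setminus\partial_{0}SM$ I would argue as for ordinary geodesics (cf.\ \cite[Section~3.2]{GIP2D}, \cite{Jathar:Kar:Railo:2023:arxiv}). With $\rho$ a boundary defining function and $h(t,x,v)=\rho(\gamma_{x,v}(t))$ as in the proof of Lemma \ref{lm:taupm}, strict $\lambda$-convexity of $\partial M$ together with $(x_{0},v_{0})\notin\partial_{0}SM$ guarantees that the $\lambda$-geodesic through $(x_{0},v_{0})$ meets $\partial M$ transversally at each of its forward and backward exit instants: a tangential contact at a nonzero exit instant would force $\partial_{t}^{2}h<0$ there (this is exactly how strict $\lambda$-convexity enters the proof of Lemma \ref{lm:taupm}), hence $h<0$ immediately before the contact, contradicting the fact that the $\lambda$-geodesic lies in $M$ up to that instant, while tangency at $t=0$ would mean $(x_{0},v_{0})\in\partial_{0}SM$. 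Therefore $\partial_{t}h\neq0$ at $t=\pm\tau^{\pm}(x_{0},v_{0})$, and the implicit function theorem, carried out on the ambient closed surface $N$ along which $\rho$ and the $\lambda$-geodesics extend, shows that $\tau^{+}$ and $\tau^{-}$, hence $\tilde{\tau}$, are smooth near $(x_{0},v_{0})$.

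For a glancing point $(x_{0},v_{0})\in\partial_{0}SM$ I would invoke Lemma \ref{lm:taupm}, which furnishes a function $Q$ smooth near $(0,x_{0},v_{0})$ in $\R\times SN$ and a function $a\ge0$ smooth near $(x_{0},v_{0})$ in $SN$ with $\tau^{+}(x,v)=Q(\sqrt{a(x,v)},x,v)$ and $-\tau^{-}(x,v)=Q(-\sqrt{a(x,v)},x,v)$ on a neighbourhood of $(x_{0},v_{0})$ in $SM$. Adding these two identities gives
\[
\tilde{\tau}(x,v)=Q\!\left(\sqrt{a(x,v)},x,v\right)+Q\!\left(-\sqrt{a(x,v)},x,v\right)=g\!\left(\sqrt{a(x,v)},x,v\right),
\]
where $g(s,x,v):=Q(s,x,v)+Q(-s,x,v)$ is smooth and \emph{even} in $s$. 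By the parametrized version of Whitney's theorem on even functions --- a $C^{\infty}$ function that is even in one of its variables factors through the square of that variable, with smooth dependence on the remaining variables --- there is a smooth $\hat{g}$ defined near $(0,x_{0},v_{0})$ with $g(s,x,v)=\hat{g}(s^{2},x,v)$; hence $\tilde{\tau}(x,v)=\hat{g}(a(x,v),x,v)$ is smooth near $(x_{0},v_{0})$ because $a$ is. Combining the two cases yields $\tilde{\tau}\in C^{\infty}(SM)$.

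The only delicate step is the glancing case. One has to recognize that $\tilde{\tau}$ equals $\tau^{+}$ plus the backward transit time $-\tau^{-}$, i.e.\ the \emph{total} transit time of the $\lambda$-geodesic through $(x,v)$, so that it is an even function of the parameter $\sqrt{a}$ produced by Lemma \ref{lm:taupm}; the remaining ingredient --- that a smooth even function of $s$ is a smooth function of $s^{2}$, uniformly in parameters --- is classical but should be cited, e.g.\ from Whitney's work on differentiable even functions or as a consequence of the Malgrange--Mather preparation theorem.
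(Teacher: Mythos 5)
Your proof is correct and follows essentially the same route as the paper: away from $\partial_0 SM$ one uses smoothness of $\tau^{\pm}$ individually (the paper cites \cite[Lemma 3.15]{Jathar:Kar:Railo:2023:arxiv} for this, where you supply the transversality/implicit-function-theorem argument directly), and at glancing points one writes $\tilde\tau = Q(\sqrt{a},\cdot)+Q(-\sqrt{a},\cdot)$ via Lemma \ref{lm:taupm} and applies Whitney's theorem on smooth even functions, exactly as in the paper. The only cosmetic difference is that you spell out the non-glancing case rather than quoting it.
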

\begin{proof}
 Assume that $(M, g)$ is isometrically embedded in a closed manifold $(N, g)$ of the same dimension and $\lambda_0$ is a smooth extension of $\lambda$ in $N$.
From \cite[Lemma 3.15]{Jathar:Kar:Railo:2023:arxiv}, it suffices to show smoothness of $\widetilde\tau$ in the neighbourhood of $\partial_0 S M$.

From Lemma \ref{lm:taupm}, we have 
\begin{align*}
    \widetilde\tau(x,v)=Q\left(\sqrt{a(x, v)}, x, v\right)+Q\left(-\sqrt{a(x, v)}, x, v\right)
\end{align*}
     where $Q$ is a smooth function near $(0, x_0, v_0)$ in $\mathbb{R} \times SN$ and $a$ is a smooth function near $(x_0, v_0)$ in $SN$, and $a \geq 0$ in $SM$ 
     Now, as $Q(r,x,v)+Q(-r,x,v)$ is an even function in $r$, using \cite[Theorem 1]{Whitney:1943}, there exists a smooth function $H$ near $(0,x_0,v_0)$ such that $Q(r,x,v)+Q(-r,x,v)=H(r^2,x,v)$. This implies $\widetilde \tau(x,v)=H(a(x,v),x,v)$, which proves the smoothness of $\widetilde \tau$.
\end{proof}
Finally, we prove that having nontrapping $\lambda$-geodesic flow is an open condition. 
\begin{lemma}\label{lm:open:nontrapped}
    Let $(M, g, \lambda), \lambda \in C^{\infty}(S M)$, be a compact Riemanian surface with strictly $\lambda$-convex boundary. The $\lambda$-geodesic flow is nontrapping if and only if there exists a function $f \in C^{\infty}(S M)$ such that the inequality $(X+\lambda V) f>0$ holds.
\end{lemma}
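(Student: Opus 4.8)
The plan is to establish the two implications separately, modeling the argument on \cite[Proposition 3.3.1]{GIP2D} but tracking the extra $\lambda V$ term throughout. For the easy direction, suppose such an $f \in C^\infty(SM)$ with $(X+\lambda V)f > 0$ exists. Along any $\lambda$-geodesic $\gamma_{x,v}$ the function $t \mapsto f(\phi_t(x,v))$ has derivative $((X+\lambda V)f)(\phi_t(x,v)) > 0$, so it is strictly increasing; since $SM$ is compact, $f$ is bounded, hence the orbit cannot remain in $SM$ for all forward or all backward time. This forces $\tau^\pm(x,v) < \infty$ for every $(x,v)$, and continuity of $\tau^\pm$ up to $\partial SM$ follows from the strict $\lambda$-convexity together with the structural results already cited (e.g. the neighbourhood analysis near $\partial_0 SM$ in Lemma \ref{lm:taupm} and \cite[Lemma 3.15]{Jathar:Kar:Railo:2023:arxiv}); so the flow is nontrapping.

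For the converse, assume the $\lambda$-geodesic flow is nontrapping. The natural candidate is built from travel times. First I would use Lemma \ref{lm:smooth:ext:tau} to get that $\widetilde\tau(x,v) = \tau^+(x,v) - \tau^-(x,v) \in C^\infty(SM)$, and recall that $\tau^\pm$ themselves are smooth on $\inte SM$ with $(X+\lambda V)\tau^+ = -1$ and $(X+\lambda V)\tau^- = -1$ there (differentiating the defining relations $\tau^\pm(\phi_s(x,v)) = \tau^\pm(x,v) \mp s$). Hence on $\inte SM$ one has $(X+\lambda V)\widetilde\tau = (X+\lambda V)(\tau^+ - \tau^-)$; I need to recompute the sign carefully, since $\tau^-$ as used here is the backward exit time. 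Writing $a^+ := \tau^+$ and $a^- := \tau^-$ with the conventions of Lemma \ref{lm:taupm}, along the flow $a^+$ decreases at unit rate and $-a^-$ also decreases at unit rate, so $\widetilde\tau = a^+ + a^-$ satisfies $(X+\lambda V)\widetilde\tau = -2$ wherever it is differentiable in the interior. Then $f := -\widetilde\tau$ (or, if a strictly positive value of $f$ is also wanted, $f := C - \widetilde\tau$ for a large constant $C$) is smooth on all of $SM$ by Lemma \ref{lm:smooth:ext:tau} and satisfies $(X+\lambda V)f = 2 > 0$ on $\inte SM$; by continuity of $(X+\lambda V)f$ and density of $\inte SM$, the inequality $(X+\lambda V)f > 0$ extends to all of $SM$.

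The main obstacle is the regularity of $\widetilde\tau$ and the behaviour of $\tau^\pm$ at the glancing set $\partial_0 SM$: individually $\tau^+$ and $\tau^-$ are only Hölder-$1/2$ there (they involve $\sqrt{a(x,v)}$ in the normal form of Lemma \ref{lm:taupm}), so one cannot simply take $f = -\tau^+$. The point of forming the combination $\tau^+ - \tau^-$ is precisely that the two square-root singularities cancel — this is the content of Lemma \ref{lm:smooth:ext:tau}, via the Whitney even-function theorem — and it is this cancellation that makes $f$ genuinely smooth up to and across the glancing boundary. A secondary subtlety is justifying the ODE identity $(X+\lambda V)\tau^\pm = \mp 1$ on the interior and then passing the strict inequality $(X+\lambda V)f>0$ to the boundary by continuity, which uses that $X+\lambda V$ is a smooth vector field on a neighbourhood of $SM$ in $SN$ and that $\widetilde\tau$ extends smoothly. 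Once these points are in place, the rest is routine.
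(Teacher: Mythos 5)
Your proposal follows essentially the same route as the paper: for the converse you take $f=-\widetilde\tau$ with $\widetilde\tau=\tau^+-\tau^-$ made smooth by Lemma \ref{lm:smooth:ext:tau} (the Whitney even-function cancellation of the two square-root singularities at $\partial_0SM$), and for the forward direction you run the monotonicity-versus-compactness argument. Two points need fixing. First, a sign slip: after correctly recalling $\widetilde\tau=\tau^+-\tau^-$ you later write $\widetilde\tau=a^++a^-$ with $a^\pm:=\tau^\pm$. Since $\tau^+(\phi_t)=\tau^+-t$ and $\tau^-(\phi_t)=\tau^-+t$, the sum $\tau^++\tau^-$ is constant along orbits (it is the total chord length, which is moreover \emph{not} smooth at the glancing set — it is the \emph{difference} that is even in $\sqrt{a}$); only $\tau^+-\tau^-$ satisfies $(X+\lambda V)(\tau^+-\tau^-)=-2$, which is what your final conclusion uses, so the displayed combination must be corrected.

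Second, and more substantively, in the direction ``$\exists f$ with $(X+\lambda V)f>0$ $\Rightarrow$ nontrapping'' your argument as written has a gap: a strictly increasing bounded function of $t$ can perfectly well be defined for all $t>0$, so ``strictly increasing plus $f$ bounded'' does not by itself force the orbit to exit. You must use compactness of $SM$ and continuity of $(X+\lambda V)f$ to get a uniform bound $(X+\lambda V)f\ge c>0$, whence $f(\phi_t(x,v))\ge f(x,v)+ct\to\infty$, contradicting boundedness; this is exactly how the paper phrases it. With that one-line insertion (and the sign corrected) your proof matches the paper's.
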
\begin{proof}
    First, assume that the $\lambda$-geodesic flow is nontrapping on $M$. By Lemma \ref{lm:smooth:ext:tau}, the function $\widetilde{\tau}$ is smooth on $S M$. Consider the function $f=-\widetilde{\tau}$. Then, we have
     \begin{align*}
 (X + \lambda V) f(x, v) &= -\left. \frac{d}{dt} \left( \tau^+(\phi_t(x, v)) - \tau^-(\phi_t(x, v)) \right) \right|_{t=0} \\
    &= -\left. \frac{d}{dt} \left( \left[ \tau^+(x, v) - t \right] - \left[ \tau^-(x, v) + t \right] \right) \right|_{t=0}= 2 > 0.
    \end{align*}
   Conversely, assume there exists $f \in C^{\infty}(S M)$ such that $(X+\lambda V) f \geq c>0$ for some constant $c$. Suppose, for the sake of contradiction, that there exists a trapped $\lambda$-geodesic $\gamma$. Applying the fundamental theorem of calculus, we obtain $f\left(\phi_t(x, v)\right)-f(x, v) \geq c t$ for all $t>0$. This implies that $f\circ (\gamma,\dot{\gamma})$ is unbounded, which contradicts the compactness of $S M$. Hence, the assumption of the existence of a trapped $\lambda$-geodesic is wrong, completing the proof.
\end{proof}

    \section{Regularity of the primitive for the kernel of the attenuated \texorpdfstring{$\lambda$}{lambda}-ray transform}\label{sec_app2}
Given any function $w\in C^{\infty}(\partial_+SM,\mathbb C^n)$, consider the unique solution $w^{\sharp}:SM\to \mathbb C^n$ to the transport equation:
\begin{align*}
    \begin{cases}
        (X + \lambda V)(w^{\sharp}) = 0, \\
        w^{\sharp}|_{\partial_{+}(SM)} = w.
    \end{cases}
\end{align*}
Similar to the concept of scattering relation along geodesics, as defined in \cite[Definition 3.3.4]{GIP2D}, we define the $\lambda$-scattering relation $\alpha$ as a map which associates the initial point and direction of a $\lambda$-geodesic flow with its endpoint and direction.
More precisely, let $(M, g, \lambda)$, with $\lambda \in C^{\infty}(SM)$, be a compact Riemannian surface with strictly $\lambda$-convex boundary and nontrapping $\lambda$-geodesic flow. Then, the $\lambda$-scattering relation $\alpha: \partial SM \to \partial SM$ is given by $\alpha(x, v) := \phi_{\tilde\tau}(x, v)$, where $\tilde\tau$ is defined as in Lemma \ref{lm:smooth:ext:tau}.

Similar to the approach in \cite{Pestov:Uhlmann:2005}, we introduce the operator $E:C^{\infty}(\partial_+SM)\to C^{\infty}(\partial SM)$ defined as 
\begin{align*}
   Ew(x, v) = \begin{cases}
                  w(x, v) & \text{if } (x, v) \in \partial_{+}SM, \\
                  (w \circ \alpha)(x, v) & \text{if } (x, v) \in \partial_{-}SM.
              \end{cases}
\end{align*}
Additionally, we define the space
\begin{align*}
    C_{\alpha, \lambda}^{\infty}(\partial_+SM) := \{ w \in C^{\infty}(\partial_+SM) : Ew \in C^{\infty}(\partial SM) \}.
\end{align*}
The following lemma generalizes \cite[Lemma 1.1]{Pestov:Uhlmann:2005} or \cite[Theorem 5.1.1]{GIP2D} to the context of $\lambda$-geodesic flow.
\begin{lemma}\label{lm:space}
    Let $(M,g,\lambda)$, $\lambda \in C^{\infty}(SM)$, be a compact Riemannian surface with strictly $\lambda$-convex boundary and nontrapping $\lambda$-geodesic flow. Then 
    \begin{align*}
    C_{\alpha,\lambda}^{\infty}(\partial_+SM):=\{w\in C^{\infty}(\partial_+SM):w^{\sharp}\in C^{\infty}( SM)\}.
\end{align*}
\end{lemma}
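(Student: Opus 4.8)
The plan is to prove the two inclusions separately; the equivalence is essentially a restatement that the first-order linear transport problem $(X+\lambda V)w^\sharp=0$ with boundary data $w$ on $\partial_+SM$ has a solution $w^\sharp$ that is smooth on all of $SM$ precisely when the boundary extension $Ew$ is smooth on $\partial SM$. First I would fix the setup: by Lemma \ref{lm:smooth:ext:tau} the function $\widetilde\tau$ is smooth on $SM$, so the flow-out map $(x,v,t)\mapsto\phi_t(x,v)$ and the $\lambda$-scattering relation $\alpha=\phi_{\widetilde\tau}$ are well-behaved. Since the $\lambda$-geodesic flow is nontrapping, for each $(x,v)\in SM$ there is a unique $(y,w)\in\partial_+SM$ and time $t\in[0,\widetilde\tau(y,w)]$ with $\phi_t(y,w)=(x,v)$, and along that orbit $w^\sharp(x,v)=w^\sharp(y,w)=w(y,w)$ because $(X+\lambda V)w^\sharp=0$ means $w^\sharp$ is constant on orbits. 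Concretely, $w^\sharp(x,v)=w(\text{backward footpoint of }(x,v))$.

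For the inclusion ``$w^\sharp\in C^\infty(SM)\Rightarrow Ew\in C^\infty(\partial SM)$'': this is the easy direction. If $w^\sharp$ is smooth on $SM$, then in particular its restriction to the closed submanifold $\partial SM$ is smooth; and on $\partial_+SM$ this restriction is $w$, while on $\partial_-SM$ every point $(x,v)$ is the forward endpoint $\phi_{\widetilde\tau(y,u)}(y,u)$ of a unique influx point $(y,u)$ with $\alpha(y,u)=(x,v)$, so $w^\sharp(x,v)=w^\sharp(y,u)=w(y,u)=(w\circ\alpha^{-1})(x,v)$. One has to be slightly careful about which composition appears, but since $\alpha:\partial_+SM\to\partial_-SM$ is a diffeomorphism (being $\phi_{\widetilde\tau}$ with $\widetilde\tau$ smooth, with smooth inverse the backward scattering), the map $Ew$ — defined in the excerpt as $w$ on $\partial_+SM$ and $w\circ\alpha$ on $\partial_-SM$ with $\alpha$ understood as the relevant scattering map — agrees with $w^\sharp|_{\partial SM}$, hence is smooth. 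So $w\in C^\infty_{\alpha,\lambda}(\partial_+SM)$ in the sense of the definition using $Ew$.

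The harder direction is ``$Ew\in C^\infty(\partial SM)\Rightarrow w^\sharp\in C^\infty(SM)$'', and this is where I expect the main obstacle to lie, exactly as in \cite[Lemma 1.1]{Pestov:Uhlmann:2005} and \cite[Theorem 5.1.1]{GIP2D}. The strategy: enlarge $M$ to a slightly larger manifold $M_0$ (using that nontrapping and strict $\lambda$-convexity are open conditions, by Lemma \ref{lm:open:nontrapped} and Definition \ref{def:smooth:lambda:extension}) with nontrapping $\lambda_0$-flow. Extend $Ew$, which is smooth on the \emph{closed} manifold $\partial SM$, to a smooth function on $SM_0$ — this is the crucial point: smoothness of $Ew$ across the ``glancing'' region $\partial_0 SM$ is precisely what is needed to get a smooth extension, via a Borel/Seeley-type or Whitney extension argument adapted so that the extension is compatible with the flow. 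Then solve $(X+\lambda_0 V)\widetilde w=0$ in $M_0$ with that smooth boundary data on $\partial_+SM_0$; since $M_0$ is nontrapping with smooth travel time, the solution $\widetilde w$ in $SM_0$ is smooth — it equals the chosen extension pushed forward along orbits, and smoothness of the flow-out plus smoothness of the data give smoothness of $\widetilde w$ on $\inte SM_0$, hence on a neighbourhood of $\overline{M}$. Restricting, $w^\sharp=\widetilde w|_{SM}$ is smooth on $SM$. The delicate steps are: (i) checking that the solution in the open enlarged manifold is genuinely smooth up to and including the original boundary, which uses that $SM$ sits in the interior of $SM_0$; and (ii) verifying that the extended data, when flowed, restricts correctly to $w$ on $\partial_+SM$ and to $w\circ\alpha$ on $\partial_-SM$ — i.e. that the construction is consistent with $Ew$. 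I would carry out these steps in order: (1) recall $\widetilde\tau\in C^\infty(SM)$ and the structure of $\alpha$; (2) prove the easy inclusion by restriction; (3) construct the enlargement $M_0$; (4) extend $Ew$ smoothly to $SM_0$; (5) solve the transport equation on $M_0$ and conclude smoothness of $w^\sharp$ by restriction. The main obstacle is step (4) together with the up-to-the-boundary regularity in step (5), mirroring the classical geodesic case.
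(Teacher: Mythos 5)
Your easy direction is fine, and your overall architecture (restriction for one inclusion, reduction to a transport problem on an enlargement for the other) is reasonable, but the hard direction has a genuine gap at exactly the point you flag as step (4). You propose to extend $Ew$ from $\partial SM$ to smooth boundary data on $\partial_+SM_0$ ``compatibly with the flow'' and then flow it in. For the resulting solution to restrict to $w^\sharp$ on $SM$, the extended data must be constant along the segments of $\lambda_0$-orbits lying in $SM_0\setminus SM$, i.e.\ its value at $(y,u)\in\partial_+SM_0$ must equal $Ew$ at the first intersection of that orbit with $\partial SM$. But the first-hit map $\partial_+SM_0\to\partial SM$ has a square-root singularity along the set of orbits tangent to $\partial M$, so the smoothness of this pullback is not a routine Borel/Seeley/Whitney extension --- it is precisely the glancing-region difficulty the lemma is about, now relocated one manifold outward. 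As written, your argument defers the entire content of the lemma to an unproven extension step; an arbitrary smooth extension of $Ew$ will not be flow-compatible, and the flow-compatible one is not obviously smooth.

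The paper closes this gap differently, and you should note the mechanism because it is the actual idea of the proof. One takes \emph{any} smooth extension $\tilde w$ of $Ew$ to the closed ambient manifold $SN$ (no flow compatibility required) and observes that, since $Ew=w^\sharp|_{\partial SM}$ takes the same value at both endpoints of each orbit, $w^\sharp(x,v)=\tfrac12\bigl[\tilde w(\phi_{\tau^+(x,v)}(x,v))+\tilde w(\phi_{-\tau^-(x,v)}(x,v))\bigr]$ on all of $SM$. Near a glancing point, Lemma \ref{lm:taupm} gives $\tau^+=Q(\sqrt{a},x,v)$ and $-\tau^-=Q(-\sqrt{a},x,v)$, so $w^\sharp$ becomes $\mathcal G(\sqrt{a},x,v)+\mathcal G(-\sqrt{a},x,v)$ with $\mathcal G$ smooth; this expression is even in the first argument, and Whitney's theorem on even functions \cite{Whitney:1943} rewrites it as $H(a(x,v),x,v)$ with $H$ smooth, hence $w^\sharp$ is smooth across $\partial_0SM$. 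It is this symmetrization-plus-evenness step (the same device as in \cite[Lemma 1.1]{Pestov:Uhlmann:2005} and \cite[Theorem 5.1.1]{GIP2D}) that your proposal is missing; your passing mention of ``Whitney'' refers to the extension theorem, not to the even-function representation theorem that is actually needed. Your worry (i) about up-to-the-boundary regularity in step (5), by contrast, is not a real obstacle, since $SM$ sits in the interior of $SM_0$ and $\tau_0^{\pm}|_{SM}$ are smooth.
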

\begin{proof}
    If $w^{\sharp} \in C^{\infty}(S M)$, then obviously $E w=\left.w^{\sharp}\right|_{\partial_{+} S M} \in C^{\infty}(\partial S M)$. Conversely, assume that $E w \in C^{\infty}(\partial S M)$. It suffices to show that $w^{\sharp} \in C^{\infty}(S M)$.

    Assume that $(M, g)$ is isometrically embedded in a closed manifold $(N, g)$ of the same dimension, as stated in \cite[Lemma 3.1.8]{GIP2D}. Extend $\lambda$ smoothly to the whole $SN$. Consider a smooth extension $\tilde{w}$ of $Ew$ into $SN$. Define $\mathfrak{F}(x, v, t) = \frac{1}{2}\tilde{w}(\phi_t(x, v))$, then
    \begin{align*}
        w^{\sharp}(x, v) &= \frac{1}{2}\left[\tilde{w}\left(\phi_{\tau^+(x, v)}(x, v)\right) + \tilde{w}\left(\phi_{-\tau^-(x, v)}(x, v)\right)\right] \\
        &= \mathfrak{F}\left(x, v, \tau^+(x, v)\right) + \mathfrak{F}\left(x, v, -\tau^-(x, v)\right).
    \end{align*}
    As $\tau^{\pm}$ is smooth on $SM \setminus \partial_0SM$, $w^{\sharp}$ is smooth on $SM \setminus \partial_0SM$. Fixing any point $(x_0, v_0) \in \partial_0SM$, and from Lemma \ref{lm:taupm}, we have
    \begin{align*}
        w^{\sharp}(x, v) &= \mathfrak{F}\left(x, v, Q(\sqrt{a(x, v)}, x, v)\right) + \mathfrak{F}\left(x, v, Q(-\sqrt{a(x, v)}, x, v)\right)
    \end{align*}
    near $(x_0, v_0)$ in $SM$, where $Q$ is a smooth function near $(0, x_0, v_0) \in \mathbb{R} \times SN$. Set $\mathcal G(r,x,v):=\mathfrak F(x,v,Q(r,x,v))$. This implies 
    \begin{align*}
      w^{\sharp}(x, v) &=\mathcal G\left(\sqrt{a(x,v)},x,v \right)+  \mathcal G\left(-\sqrt{a(x,v)},x,v \right)
    \end{align*}
    near $(x_0, v_0)$ in $SM$, where $\mathcal G$ is a smooth function near $(0, x_0, v_0) \in \mathbb{R} \times SN$. Now, as $\mathcal G(r,x,v)+\mathcal G(-r,x,v)$ is even function in $r$, by using \cite[Theorem 1]{Whitney:1943}, there exists smooth function $H$ near $(0,x_0,v_0)$ such that $\mathcal G(r,x,v)+\mathcal G(-r,x,v)=H(r^2,x,v)$. This indeed shows that 
    \begin{align*}
       w^{\sharp}(x, v) =H(a(x,v),x,v)  
    \end{align*}
    near $(x_0,v_0)$ in $SM$, which finally proves that $ w^{\sharp}(x, v)$ is smooth near the point $(x_0,v_0)$ in $SM$. As  $(x_0,v_0)$ is any arbitrary point in $\partial_0SM$, we have $ w^{\sharp}\in C^{\infty}(SM)$.
\end{proof}
\begin{proof}[Proof of Lemma \ref{lm:reg:u_A_Lam}]
    From \eqref{eq:u:int:form:R} and $I_{\A}^{\lambda}(f)=0$, we have
    \begin{align*}
        (X+\lambda V)(R^{-1}u^f)=-R^{-1}f \quad \text{ in } SM\quad  R^{-1}u^f|_{\partial SM}=0.
    \end{align*}
Similar to the proof of \cite[Theorem 5.3.6]{GIP2D}, by enlarging $M$ to $M_0$ (as defined in Definition \ref{def:smooth:lambda:extension}) with the travel time $\tau_0$, and also extending $R^{-1}f$ smoothly to $M_0$, consider
    \begin{align*}
        h(x, v) = \int_0^{\tau_0(x, v)} R^{-1}f(\phi_t(x, v)) \, dt
    \end{align*}
    for $(x, v) \in SM$. Note that $\tau_0|_{SM}$ is smooth, which implies $h\in C^{\infty}(SM)$. Also, $(h-R^{-1}u^f)|_{\partial SM}=h|_{\partial SM}\in C^{\infty}(\partial SM,\mathbb C^n) $. As $(X+\lambda V)(h-R^{-1}u^f)=0$ and $(h-R^{-1}u^f)|_{\partial SM}\in C^{\infty}(\partial SM,\mathbb C^n)$, then by Lemma \ref{lm:space}, $h - R^{-1}u^f$ is smooth in $SM$. The smoothness of $h$ in $SM$ implies that $R^{-1}u^f$ is smooth, which in turn proves that $u^f$ is smooth.
\end{proof}    

\subsection*{Acknowledgments} J.R. thanks Gabriel P. Paternain for many helpful discussions related to the topics of this work. S.R.J. and J.R. would like to thank the Isaac Newton Institute for Mathematical Sciences, Cambridge, UK, for support and hospitality during \emph{Rich and Nonlinear Tomography - a multidisciplinary approach} in 2023 where part of this work was done (supported by EPSRC Grant Number EP/R014604/1). The work of S.R.J. and J.R. was supported by the Research Council of Finland through the Flagship of Advanced Mathematics for Sensing, Imaging and Modelling (decision number 359183). S.R.J. acknowledges the Prime Minister's Research Fellowship (PMRF) from the Government of India for his PhD work.  M.K. was supported by MATRICS grant (MTR/2019/001349) of SERB. J.R. was supported by the Vilho, Yrjö and Kalle Väisälä Foundation of the Finnish Academy of Science and Letters.
\subsection*{Data availability statement} Data sharing not applicable to this article as no datasets were generated or analyzed during the current study.
\subsection*{Conflict of interest}
The authors declared no potential conflicts of interest with respect to the research, authorship, and/or publication of this article.

   \bibliography{math} 
	
	\bibliographystyle{alpha}

\end{document}